\newcommand{\shapipe}{\,
                    \setlength{\unitlength}{1ex}
                    \begin{picture}(2,2)
                    \put(.25,.15){\line(0,1){1.22}}
                    \put(.25,1.37){\line(1,0){0.61}}
                    \put(.45,-.05){\line(1,0){0.61}}
                    \put(.4,0){\line(1,0){0.61}}
                    \put(.35,.05){\line(1,0){0.61}}
                    \put(.3,.1){\line(1,0){0.61}}
                    \put(.25,.15){\line(1,0){0.61}}
                    \put(1.04,-.05){\line(0,1){1.22}}
                    \put(0.99,0){\line(0,1){1.22}}
                    \put(0.94,.05){\line(0,1){1.22}}
                    \put(0.89,.1){\line(0,1){1.22}}
                    \put(0.84,.15){\line(0,1){1.22}}
                    \end{picture}
                    \!}
\newcommand{\qee} {\hspace*{2mm}\hfill $\shapipe$}
\newtheorem{theorem}{Theorem}[section]
\newtheorem{define}[theorem]{Definition}
\newtheorem{exa}[theorem]{Example}
\newenvironment{example}{\begin{exa} \rm}{\qee\end{exa}}
\newtheorem{propo}[theorem]{Proposition}
\newtheorem{exerc}[theorem]{Exercise}
\newtheorem{conj}[theorem]{Conjecture}
\newtheorem{ques}[theorem]{Open Question}
\newenvironment{question}{\begin{ques} \rm}{\qee\end{ques}}
\newtheorem{lem}[theorem]{Lemma}
\newenvironment{lemma}{\begin{lem} \it}{\end{lem}}
\newtheorem{cor}[theorem]{Corollary}
\newenvironment{corollary}{\begin{cor} \it}{\end{cor}}
\newtheorem{factief}[theorem]{Fact}
\newenvironment{fact}{\begin{factief} \it}{\end{factief}}
\newtheorem{rem}[theorem]{Remark}
\newenvironment{remark}{\begin{rem} \rm}{\qee\end{rem}}
\newtheorem{dis}[theorem]{Discussion}
 \newcommand{\bident}[0]{\bigskip\noindent}
\newcommand{\medent}{\medskip\noindent}
  \newcommand{\tupel}[1]{{\langle #1 \rangle}}
\newcommand{\verz}[1]{\{ #1 \}}
\newcommand{\stackarrow}[1]{\stackrel{#1}{\longrightarrow}}
\newcommand{\To}{\Rightarrow}
\renewcommand{\iff}{\leftrightarrow}
\newcommand{\Iff}{\Leftrightarrow}
\newcommand{\braee}[1]{\, [ \! [ #1 ] \! ] \,}
 \newcommand{\nrhd}{\mathrel{\not\! \rhd}}
\newcommand{\lhdneq}{\mathrel{\lhd_{\hspace*{-0.235cm}{}_{\neq}}}}
\newcommand{\opr}{{\Square}}
\newcommand{\omodel}{$\omega$-model}
\newcommand{\pmodel}{{\sf e}-$\omega$-model}
\newcommand{\qmodel}{{\sf i}-$\omega$-model}
\newcommand{\kraj}{Kraj\'{\i}{\v{c}}ek}
\newcommand{\thres}[2]{#1{\restriction} #2}
\newcommand{\unmo}{\forall}
\newcommand{\exmo}{\exists}
 \newcommand{\gnum}[1]{\underline{ \ulcorner #1 \urcorner}}
 \newcounter{coco}
 \newcommand{\constant}[1]{\refstepcounter{coco}\label{#1}{\mathfrak c}_{\arabic{coco}}}
\newcommand{\constantref}[1]{{\mathfrak c}_{\ref{#1}}}
\title[Small is very small]{The Small-Is-Very-Small Principle}
\author{Albert Visser}
 \address{Philosophy, Faculty of Humanities,
                Utrecht University,
               Janskerkhof 13,
                3512BL~~Utrecht, The Netherlands}
\email{a.visser@uu.nl}
\date{\today}
\keywords{interpretations, degrees of interpretability, sequential theories, Rosser argument}
\subjclass[2010]{03C62, 03F30, 03F40, 03H15}
\thanks{We thank Lev Beklemishev for enlightening discussions. We thank Ali Enayat for suggesting some important references.  We are grateful to Joost Joosten for sharing his insights on
the Friedman-Goldfarb-Harington Theorem. 
The main result of Section~\ref{degrees} is due to the previous 2006 version of me. 
I thank my previous self for its gracious permission to publish the result here.}
\begin{document}

\begin{abstract} 
The central result of this paper is \emph{the small-is-very-small principle} for restricted sequential theories. 
The principle says roughly that whenever the given theory shows that a property has a small witness,
i.e. a witness in every definable cut, then it shows that the property has a very small witness: i.e. a witness
 below a given standard number. 

We draw various consequences from the central result. For example (in rough formulations):  (i) Every restricted, recursively
enumerable sequential theory has a finitely axiomatized extension that is conservative w.r.t. formulas of complexity $\leq n$.
(ii) Every sequential model has, for any $n$,  an extension that is elementary for formulas of complexity $\leq n$,
  in which the intersection of all definable cuts is the
natural numbers.
(iii) We have reflection for $\Sigma^0_2$-sentences with sufficiently small witness in any consistent restricted theory $U$. (iv) Suppose
$U$ is recursively enumerable and sequential. Suppose further that every recursively enumerable and sequential
$V$ that locally inteprets $U$, globally interprets $U$. Then, $U$ is mutually globally interpretable
with a finitely axiomatized sequential theory.

The paper contains some careful groundwork developing partial satisfaction predicates in sequential theories for the complexity
measure \emph{depth of quantifier alternations.}
\end{abstract}

\maketitle

\section{Introduction}
Some proofs are like hollyhocks. If you are nice to them they give different flowers every year.
This paper is about one such proof. 
 I discovered it when searching for alternative, more syntactic, proofs of  certain theorems by Harvey Friedman (discussed in \cite{smor:nons85})
 and by  Jan {\kraj} (see \cite{kraj:note87}).
 The relevant theorem due to Harvey Friedman tells us that, if a finitely axiomatized, sequential, consistent theory $A$ interprets a recursively enumerable
 theory $U$, then $A$ interprets $U$ faithfully. Kraj\'{\i}\v{c}ek's theorem tells us that a finitely axiomatized, sequential, consistent theory
 cannot prove its own inconsistency on arbitrarily small cuts. There is a close connection between these two theorems.
  
The quest for a syntactic proof  succeeded and the results were reported in \cite{viss:unpr93}. 
One advantage of having such a syntactic proof is clearly that it can be `internalized' in the theories we study.
I returned to the argument in
 a later paper \cite{viss:faith05}, which contains  improvements  and, above all, a better theoretical framework. 
 In my papers  \cite{viss:inte14} and \cite{viss:arit15},  the argument is employed to prove results about 
 provability logic  and about degrees of interpretability, respectively. 
 
 The syntactic argument in question
 is a Rosser-style argument or, more specifically, a Friedman-Goldfarb-Harrington-style
argument. It has all the mystery of a Rosser argument: even if every step is completely clear, it still
retains a feeling of magic trickery. 

\subsection{Contents of the Paper}
 In the present paper, we will obtain more information from the  Friedman-Goldfarb-Harrington-style
argument discussed above.
In previous work, the basic conclusion of the argument is that, given a consistent, finitely axiomatized, sequential theory $A$, there
is an interpretation $M$ of the basic arithmetic ${\sf S}^1_2$ in $A$ that is $\Sigma^0_1$-sound. In the present paper, we extend our
scope from \emph{finitely axiomatized sequential theories} to \emph{restricted sequential theories} ---this means that we 
consider theories with axioms of complexity below
a fixed finite bound. Secondly, we replace the $\Sigma^0_1$-soundness by the more general  small-is-very-small principle (SIVS).

The improved results have a number of consequences. 
In Section~\ref{core}, we show that, for any $n$, every consistent, restricted, recursively
enumerable, sequential theory has a finitely axiomatized extension that is conservative w.r.t. formulas of complexity $\leq n$.
In Section~\ref{stare}, we show that, for any $n$, every sequential model has an elementary extension w.r.t. formulas of
  of complexity $\leq n$, such that the intersection of all definable cuts consists of the standard numbers.
  In Section~\ref{refl}, we indicate how results concerning $\Sigma^0_2$-soundness can be derived from our
  main theorem. 
Finally, in Section~\ref{degrees}, we prove a result in the structure of the combined degrees of
local and global interpretability of recursively enumerable, sequential theories. We show that
\emph{if} a local degree contains a minimal global degree, \emph{then} this global degree contains a finitely axiomatized
theory. Thus, finite axiomatizability has a natural characterization, modulo global interpretability, in terms of the double degree structure.\footnote{I presented
this result in a lecture for the Moscow Symposium 
on Logic, Algebra and Computation in 2006. However, I was not able to write down the proof, since I lacked the 
necessary groundwork on partial satisfaction. This groundwork is provided in Section~\ref{bafa} of the present paper.}

Section~\ref{bafa} provides the necessary elementary facts. Unlike similar sections in other papers of mine, this section also
contains something new. In \cite{viss:unpr93}, I provided groundwork for the development of partial satisfaction predicates for the
complexity measure \emph{depth of quantifier alternations}. Our present Subsection~\ref{compmeas} gives a much better treatment
of the complexity measure than the one in  \cite{viss:unpr93}. Subsection~\ref{sare} develops the facts about sequential theories
and partial satisfaction predicates in greater detail than previously available in the literature. Moreover, we provide careful
estimates of the complexities yielded by the various constructions. 
On the one hand, these subsections
contain `what we already knew', on the other hand, as I found, even if you already know how things go, it can still be quite a puzzle
to get all nuts and bolts at the precise places where they have to go. Of course, the present treatment is still not fully explicit, but we are
further on the road.

Section~\ref{rost} contains the central result of the paper. As the reader will see, after all is said and done, the central argument is amazingly simple.
The work is in creating the setting in which the result can be comfortably stated.

\section{Basic Notions and Facts}\label{bafa}
In the present section, we provide the basics needed for the rest of the paper. 
As pointed out in the introduction the development of partial
satisfaction predicates is done in more detail here than elsewhere. For this reason this
section may also turn out to be useful for subsequent work. Of course, the reader who wants
to get on quickly to more exciting stuff could briefly look over the relevant subsections and, if needed,
return to them later.

\subsection{Theories}
In this paper we will study theories with finite signature.
In most of our papers, theories are intensional objects equipped
with a formula representing the axiom set. In the present paper, to the contrary,
a theory is just a set of sentences of the given signature closed under deduction.
This is because most of the results in the paper are extensional.

Also we do not have any constraints on the complexity of the axiom set of the
theory. If a theory is finitely axiomatizable, \emph{par abus de langage}, we use the variables like $A$ and $B$ for it, making the
letters do double work: they both stand for the theory and for a single axiom. 

When we diverge from our general format this will always be explicitly mentioned. 

\medent
In the paper, we will meet many concrete theories, to wit {\sf AS}, ${\sf PA}^-$, {\sf EA}, {\sf PRA}, {\sf PA}.
We refer the reader to the textbooks~\cite{haje:meta91} and \cite{kaye:mode91} for an introduction to these theories.

\subsection{Translations and Interpretations}\label{interpretations}

We present the notion of \emph{$m$-dimensi\-onal interpretation without parameters}.
There are two extensions of this notion: we can  consider piecewise interpretations
and we can add parameters. We will give some details on parameters in Appendix~\ref{machosmurf}.
We will not describe piecewise interpretations here.

Consider two signatures $\Sigma$ and $\Theta$. An $m$-dimensional translation $\tau:\Sigma \to \Theta$
is a quadruple $\tupel{\Sigma,\delta,\mathcal F,\Theta}$, where $\delta(v_0,\ldots,v_{m-1})$ is a $\Theta$-formula and where,
for any $n$-ary predicate $P$ of $\Sigma$, $\mathcal F(P)$ is a formula $A(\vec v_{0},\ldots, \vec v_{n-1})$ in the language
of signature $\Theta$, where
$\vec v_i = v_{i0},\ldots, v_{i(m-1)}$. Both in the case of $\delta$ and $A$ all free variables are among the variables shown.
Moreover, if $i\neq j$ or $k \neq\ell$, then $v_{ik}$ is syntactically different from $v_{j\ell}$.

We demand that we have $\vdash {\mathcal F}(P)(\vec v_0,\ldots, \vec v_{n-1}) \to \bigwedge_{i<n} \delta(\vec v_i)$.
Here $\vdash$ is provability in predicate logic. This demand is inessential, but it is convenient to have.

We define $B^\tau$ as follows:
\begin{itemize}
\item
$(P(x_0,\ldots,x_{n-1}))^\tau :=  \mathcal F(P)(\vec x_0,\ldots, \vec x_{n-1})$.
\item
$(\cdot)^\tau$ commutes with the propositional connectives.
\item
$(\forall x\, A)^\tau := \forall \vec x \,( \delta(\vec x\,) \to A^\tau)$.
\item
$(\exists x\, A)^\tau := \exists \vec x \,( \delta(\vec x\,) \wedge A^\tau)$.
\end{itemize}
There are two worries about this definition. 
First, what variables $\vec x_i$ on the side of the translation $A^\tau$
correspond with $x_i$ in the original formula $A$? 
 The second worry is that  substitution of variables in $\delta$
and $\mathcal F(P)$ may cause variable-clashes.
These worries are never important in practice: we choose
 `suitable' sequences $\vec x$ to correspond to variables $x$, and we avoid clashes by $\alpha$-conversion. 
 However, if we want to give precise definitions of translations and, for example, of composition of translations, these
 problems come into play. The problems are clearly solvable in a systematic way, but this endeavor is beyond the scope of this paper.
 
 We allow the  identity predicate to be translated to a formula that is not identity.
 
 \medent
 A translation $\tau$ is \emph{direct}, if it is one-dimensional and if $\delta_\tau(x) := (x=x)$ and if it translates
identity to identity.

\medent
There are several important operations on translations. 
\begin{itemize}
\item
${\sf id}_\Sigma$ is the identity translation. We take $\delta_{{\sf id}_\Sigma}(v) := v=v$ and $\mathcal F(P) := P(\vec v\,)$.
\item
We can compose translations. Suppose $\tau:\Sigma\to \Theta$ and $\nu:\Theta \to \Lambda$.
Then $\nu\circ \tau$ or $\tau\nu$ is a translation from $\Sigma$ to $\Lambda$. We define:
\begin{itemize}
\item
$\delta_{\tau\nu}(\vec v_0,\ldots,\vec v_{m_\tau-1}) := \bigwedge_{i< m_\tau}\delta_\nu(\vec v_i) \wedge (\delta_\tau(v_0,\ldots,v_{m_\tau-1}))^\nu$.
\item
$P_{\tau\nu}(\vec v_{0,0},\ldots,\vec v_{0,m_\tau-1},\ldots \vec v_{n-1,0},\ldots,\vec v_{n-1,m_\tau-1}) := \\
{\bigwedge_{i<n, j< m_\tau} \delta_\nu(\vec v_{i,j})}\, \wedge (P(v_0,\ldots, v_{n-1})^\tau)^\nu $.
\end{itemize}
\item
Let  $\tau,\nu :\Sigma \to \Theta$ and let $A$ be a sentence of signature $\Theta$. We define the disjunctive translation 
$\sigma:= \tau\tupel{A}\nu:\Sigma\to \Theta$ as follows. We take $m_{\sigma}:= {\sf max}(m_\tau,m_\nu)$.
We write $\vec v\restriction n$, for the restriction of $\vec v$ to the first $n$ variables, where
$n\leq {\sf length}(\vec v)$.
\begin{itemize}
\item
$\delta_\sigma(\vec v) := 
(A\wedge \delta_\tau(\vec v\restriction m_\tau)) \vee 
(\neg \, A\wedge\delta_\nu( \vec v \restriction m_\nu))$. 
\item
$P_\sigma(\vec v_0,\ldots,\vec v_{n-1}) := (A\wedge P_\tau(\vec v_0\restriction m_\tau,\ldots,\vec v_{n-1}\restriction m_\tau))
\vee \\ \hspace*{3.1cm} (\neg \, A\wedge P_\nu(\vec v_0\restriction m_\nu,\ldots,\vec v_{n-1}\restriction m_\nu))$
\end{itemize}
\end{itemize}
Note that in the definition of $\tau\tupel{A}\nu$ we used a padding mechanism. In case, for example, $m_\tau < m_\nu$,
the variables $v_{m_\tau},\ldots, v_{m_\nu-1}$ are used `vacuously' when we have $A$.
If we had piecewise interpretations, where domains are built up from pieces with possibly different dimensions,
 we could avoid padding by building the domain directly of disjoint pieces
with different dimensions.

\medent
A translation relates signatures; an interpretation relates theories.
An interpretation $K:U\to V$ is a triple $\tupel{U,\tau,V}$, where $U$ and $V$ are theories and $\tau:\Sigma_U \to \Sigma_V$.
We demand: for all theorems $A$ of $U$, we have $V\vdash A^\tau$.
Here are some further definitions.

\begin{itemize}
\item
${\sf ID}_U:U\to U$ is the interpretation $\tupel{U,{\sf id}_{\Sigma_U},U}$.
\item
Suppose $K:U\to V$ and $M:V\to W$. Then, $KM := M\circ K:U\to W$ is $\tupel{U,\tau_M\circ \tau_K,W}$.
\item
Suppose $K:U \to (V+A)$ and $M:U\to (V+\neg\,A)$. Then $K\tupel{A}M:U\to V$ is the interpretation
$\tupel{U,\tau_K\tupel{A}\tau_M,V}$. In an appropriate category $K\tupel{A}M$ is a special case of a product.
\end{itemize}

\noindent
A translation $\tau$ maps a model $\mathcal M$ to an internal model $\widetilde \tau (\mathcal M)$ provided that 
$\mathcal M \models \exists \vec x\, \delta_\tau(\vec x\,)$. Thus, an interpretation $K:U\to V$ gives us a
mapping $\widetilde K$ from ${\sf MOD}(V)$,  the class of models of $V$, to ${\sf MOD}(U)$, the class
of models of $U$. If we build a category of theories and interpretations, usually {\sf MOD}
with ${\sf MOD} (K):= \widetilde K$ will be a contravariant functor.

We use $U \stackarrow{K} V$ or  $K:U \lhd V$ or $K:{V\rhd U}$ as alternative notations for $K:U \to V$.
The alternative notations $\lhd$ and $\rhd$ are used in a context where we are interested in interpretability
as a preorder or as a provability analogue. 

We write: $U \lhd V$ and $U \rhd V$, for: there is an interpretation $K:U \lhd V$. We use $U \equiv V$, for:
$U \lhd V$ and $U\rhd V$.

The arrow notations are mostly used in a context where we are interested in a category of interpretations, but also
simply when they improve readability.

\medent
We write $U \lhd_{\sf loc} V$ or $V \rhd_{\sf loc} U$ for: for all finite subtheories $U_0$ of $U$,
 $U_0 \lhd V$. We pronounce this as: $U$ is locally interpretable in $V$ or
 $V$ locally interprets $U$. We use $\equiv_{\sf loc}$ for the induced equivalence relation  of $\lhd_{\sf loc}$.

  \subsection{Complexity and Restricted Provability}\label{compmeas}

{\em Restricted provability} plays an important role in the study of interpretability between sequential theories. 
An $n$-proof is a proof from axioms with G\"odel number smaller or equal than $n$\/ only involving formulas
of complexity smaller or equal than $n$. To work conveniently with this notion, 
a good complexity measure is needed. Such a measure should satisfy
three conditions. 
\begin{enumerate}[i.]
\item
Eliminating terms in favor of a relational formulation should 
raise the complexity only by a fixed standard number. 
\item
Translation of a formula via the translation $\tau$ should raise the complexity of the formula by a fixed
standard number depending only on $\tau$. 
\item
 The tower of exponents involved in cut-elimination should be of height linear in the complexity of
the formulas involved in the proof. 
\end{enumerate}

\noindent
Such a good measure of complexity together with a verification of desideratum (iii) ---a form of nesting degree of quantifier alternations--- is supplied in the
work of Philipp Gerhardy. See \cite{gerh:refi03} and \cite{gerh:role05}. A slightly different measure is  provided by 
Samuel Buss in  \cite{buss:situ15}. Buss also proves that (iii) is fulfilled for his measure. In fact, Buss proves a sharper result. He shows that
 the bound is $d+O(1)$ for $d$ alternations.
In the present paper,  we will follow Buss' treatment.

We work over a signature $\Theta$. The formula-classes we define are officially called $\Sigma^\ast_n(\Theta)$ and 
$\Pi^\ast_n(\Theta)$. However, we will suppress the $\Theta$ when it is clear from the context. 
Let {\sf AT} be the class of atomic formulas for $\Theta$, extended with $\top$ and $\bot$. We define:
{\small
\begin{itemize}
\item
$\Sigma^\ast_{0} := \Pi^\ast_{0} := \emptyset$.
\item
$\Sigma^\ast_{n+1} :: =\\  {\sf AT} \mid    \neg \,\Pi^\ast_{n+1} \mid (\Sigma^\ast_{n+1} \wedge \Sigma^\ast_{n+1}) \mid
(\Sigma^\ast_{n+1} \vee \Sigma^\ast_{n+1}) \mid (\Pi^\ast_{n+1} \to \Sigma^\ast_{n+1}) 
\mid \exists v \, \Sigma^\ast_{n+1} \mid \forall v\, \Pi^\ast_{n}$.
\item
$\Pi^\ast_{n+1} :: = \\ {\sf AT} \mid   \neg \,\Sigma^\ast_{n+1} \mid (\Pi^\ast_{n+1} \wedge \Pi^\ast_{n+1}) \mid
(\Pi^\ast_{n+1} \vee \Pi^\ast_{n+1}) \mid (\Sigma^\ast_{n+1} \to \Pi^\ast_{n+1}) \mid \forall v\, \Pi^\ast_{n+1}
\mid \exists v\, \Sigma^\ast_{n}$.
\end{itemize}
}

\medent
Buss uses $\Sigma_{n+1}$ and $\Pi_{n+1}$ where we use $\Sigma^\ast_{n+1}$ and $\Pi^\ast_{n+1}$. We employ the asterix to avoid
confusion with the usual complexity classes in the arithmetical hierarchy where bounded quantifiers
also play a role. Secondly, we modified Buss' inductive definition a bit
in order to get unique generation histories. For example, Buss adds $\Pi^\ast_{n}$ to $\Sigma^\ast_{n+1}$ in stead of $\forall v\, \Pi^\ast_{n}$.
In addition our $\Sigma^\ast_0$ and $\Pi^\ast_0$ are empty, where Buss' corresponding classes consist of the quantifier-free formulas.

Here is the parse-tree of $\forall x\, (\forall y\, \exists z\, Pxyz \to \exists u\,\exists v\, Qxuv)$ as an element of $\Sigma^\ast_4$.

\[
\ptbegtree
\ptbeg
\ptnode{$\Sigma^\ast_4:\;  \forall x$}
\ptbeg
\ptnode{$\Pi^\ast_3: \;   \to$}
\ptbeg \ptnode{$\Sigma^\ast_3: \;  \forall y$}
\ptbeg \ptnode{$\Pi^\ast_2:\; \exists z$}
\ptleaf{$\Sigma^\ast_1: Pxyz$}
\ptend
\ptend  
\ptbeg
\ptnode{$\Pi^\ast_3: \;   \exists u$}
\ptbeg
\ptnode{$\Sigma^\ast_2: \; \exists v$}
\ptleaf{$\Sigma^\ast_2: Qxuv$}
\ptend
\ptend
\ptend
\ptend
\ptendtree
\]

\medent
The extensional equivalence, for $n>0$ of our definition to Buss's is immediate from the following:

\begin{fact}\label{smallfact}
The quantifier-free formulas are in $\Sigma^\ast_1\cap\Pi^\ast_1$ and \\
 $ \Sigma^\ast_n \cup \Pi^\ast_n \subseteq \Sigma_{n+1}^\ast \cap \Pi_{n+1}^\ast$.
\end{fact}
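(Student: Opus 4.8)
The plan is to prove the two assertions separately, the second by induction on $n$.

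For the first assertion I would argue by structural induction on the quantifier-free formula $A$, establishing \emph{simultaneously} that $A \in \Sigma^\ast_1$ and $A \in \Pi^\ast_1$. The atomic case is immediate since ${\sf AT} \subseteq \Sigma^\ast_1 \cap \Pi^\ast_1$. For the propositional connectives I would invoke the corresponding clauses: $\wedge$ and $\vee$ keep a fixed polarity, whereas $\neg$ and $\to$ switch polarity, which is exactly why both memberships must be carried along at once. For example, if $A = B \to C$, the induction hypothesis gives $B \in \Pi^\ast_1$ and $C \in \Sigma^\ast_1$, so the clause $(\Pi^\ast_1 \to \Sigma^\ast_1)$ puts $A$ in $\Sigma^\ast_1$; symmetrically $B \in \Sigma^\ast_1$ and $C \in \Pi^\ast_1$ put $A$ in $\Pi^\ast_1$. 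No quantifier clause is ever needed, which is consistent with $\Sigma^\ast_0 = \Pi^\ast_0 = \emptyset$ making the level-dropping clauses vacuous at level $1$.

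For the second assertion I would prove, by induction on $n$, the four inclusions
\[ \Sigma^\ast_n \subseteq \Sigma^\ast_{n+1},\quad \Pi^\ast_n \subseteq \Pi^\ast_{n+1},\quad \Sigma^\ast_n \subseteq \Pi^\ast_{n+1},\quad \Pi^\ast_n \subseteq \Sigma^\ast_{n+1}, \]
whose conjunction is precisely $\Sigma^\ast_n \cup \Pi^\ast_n \subseteq \Sigma^\ast_{n+1} \cap \Pi^\ast_{n+1}$. The base case $n=0$ is trivial since $\Sigma^\ast_0 = \Pi^\ast_0 = \emptyset$. In the inductive step I would \emph{first} establish the two same-polarity inclusions $\Sigma^\ast_{n+1} \subseteq \Sigma^\ast_{n+2}$ and $\Pi^\ast_{n+1} \subseteq \Pi^\ast_{n+2}$ by a simultaneous structural induction on the generation of the formula; every clause is matched by the same clause one level up, and the only clauses that drop a level, namely $\forall v\, \Pi^\ast_n$ inside $\Sigma^\ast_{n+1}$ and $\exists v\, \Sigma^\ast_n$ inside $\Pi^\ast_{n+1}$, are handled by the outer induction hypothesis ($\Pi^\ast_n \subseteq \Pi^\ast_{n+1}$ and $\Sigma^\ast_n \subseteq \Sigma^\ast_{n+1}$ respectively).

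With the same-polarity inclusions in hand, I would then prove the two cross-polarity inclusions $\Sigma^\ast_{n+1} \subseteq \Pi^\ast_{n+2}$ and $\Pi^\ast_{n+1} \subseteq \Sigma^\ast_{n+2}$, again by a simultaneous structural induction (the two are mutually dependent through the $\neg$ and $\to$ clauses, so they must be run together). Here the main point, and the step I expect to demand the most care, is the quantifier clauses. An $\exists v\, B$ with $B \in \Sigma^\ast_{n+1}$ lands in $\Pi^\ast_{n+2}$ directly through the level-dropping clause $\exists v\, \Sigma^\ast_{n+1}$ of $\Pi^\ast_{n+2}$, and dually $\forall v\, B$ with $B \in \Pi^\ast_{n+1}$ lands in $\Sigma^\ast_{n+2}$ through the clause $\forall v\, \Pi^\ast_{n+1}$ of $\Sigma^\ast_{n+2}$; this exact matching is what the grammar was designed to furnish. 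The remaining level-dropping cases ($\forall v\, B$ with $B \in \Pi^\ast_n$ in the argument for $\Sigma^\ast_{n+1} \subseteq \Pi^\ast_{n+2}$, and $\exists v\, B$ with $B \in \Sigma^\ast_n$ in the dual) are dispatched by composing the outer induction hypothesis with the already-proved same-polarity inclusions, so that for instance $B \in \Pi^\ast_n \subseteq \Pi^\ast_{n+1} \subseteq \Pi^\ast_{n+2}$. The chief obstacle is thus bookkeeping rather than conceptual: one must keep the nesting of the inductions (on $n$, and the two structural inductions) non-circular, which is guaranteed by completing the same-polarity inclusions before starting the cross-polarity ones.
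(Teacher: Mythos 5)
Your proof is correct and matches the paper's (unexpanded) proof, which consists of the single remark that the result follows ``by five simple inductions'': your one structural induction for the quantifier-free case plus the four inclusions $\Sigma^\ast_n \subseteq \Sigma^\ast_{n+1}$, $\Pi^\ast_n \subseteq \Pi^\ast_{n+1}$, $\Sigma^\ast_n \subseteq \Pi^\ast_{n+1}$, $\Pi^\ast_n \subseteq \Sigma^\ast_{n+1}$ account for exactly those five, and your ordering (same-polarity before cross-polarity, inside an outer induction on $n$) correctly keeps the dependencies non-circular.
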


\noindent
The proof is by five simple inductions.
We define:
{\small
\begin{itemize}
\item
$\Delta^\ast_{n+1} :: =\\  {\sf AT} \mid
 \neg \,\Delta^\ast_{n+1} \mid (\Delta^\ast_{n+1} \wedge \Delta^\ast_{n+1}) \mid
(\Delta^\ast_{n+1} \vee \Delta^\ast_{n+1}) \mid (\Delta^\ast_{n+1} \to \Delta^\ast_{n+1}) \mid  \exists v\, \Sigma^\ast_n \mid \forall v\, \Pi^\ast_n$.
\end{itemize}
}

\noindent 
We have:

\begin{theorem}
$\Delta_{n+1}^\ast = \Sigma_{n+1}^\ast \cap \Pi_{n+1}^\ast$.
\end{theorem}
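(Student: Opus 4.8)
The plan is to prove the two inclusions separately. For $\Delta^\ast_{n+1} \subseteq \Sigma^\ast_{n+1} \cap \Pi^\ast_{n+1}$, I would argue by induction on the generation of $\Delta^\ast_{n+1}$, showing that $\Sigma^\ast_{n+1} \cap \Pi^\ast_{n+1}$ is closed under every production rule of $\Delta^\ast_{n+1}$ (the base case ${\sf AT}$ being immediate). The Boolean cases are routine: if $A, B \in \Sigma^\ast_{n+1} \cap \Pi^\ast_{n+1}$, then $A \to B \in \Sigma^\ast_{n+1}$ because $A \in \Pi^\ast_{n+1}$ and $B \in \Sigma^\ast_{n+1}$ feed the rule $(\Pi^\ast_{n+1} \to \Sigma^\ast_{n+1})$, and dually $A \to B \in \Pi^\ast_{n+1}$; conjunction and disjunction stay in the same class on both sides, and negation merely flips the two classes and so remains in the intersection. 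The quantifier cases are where Fact~\ref{smallfact} enters: if $B \in \Sigma^\ast_n$, then $\exists v\, B \in \Pi^\ast_{n+1}$ directly by the rule $\exists v\, \Sigma^\ast_n$, while $\exists v\, B \in \Sigma^\ast_{n+1}$ follows because $\Sigma^\ast_n \subseteq \Sigma^\ast_{n+1}$ by Fact~\ref{smallfact}, so $B \in \Sigma^\ast_{n+1}$ and the rule $\exists v\, \Sigma^\ast_{n+1}$ applies; the universal case is symmetric.

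For the reverse inclusion $\Sigma^\ast_{n+1} \cap \Pi^\ast_{n+1} \subseteq \Delta^\ast_{n+1}$, I would induct on the syntactic build-up of the formula $A$, exploiting the unique generation histories that motivated the modified definition: since each production of $\Sigma^\ast_{n+1}$ and of $\Pi^\ast_{n+1}$ is keyed to a distinct principal connective, membership of $A$ in either class can be inverted to read off exactly where the immediate subformulas must lie. If $A$ is atomic it lies in ${\sf AT} \subseteq \Delta^\ast_{n+1}$. If $A$ is a Boolean combination, inverting both histories places each immediate subformula in $\Sigma^\ast_{n+1} \cap \Pi^\ast_{n+1}$ again; for example, for $A = B \to C$ the $\Sigma$-history gives $B \in \Pi^\ast_{n+1}$ and $C \in \Sigma^\ast_{n+1}$, while the $\Pi$-history gives $B \in \Sigma^\ast_{n+1}$ and $C \in \Pi^\ast_{n+1}$, so both $B$ and $C$ lie in the intersection, whence the induction hypothesis and the matching $\Delta^\ast_{n+1}$-rule close the case.

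The quantifier cases are the crux and, interestingly, need only one of the two histories. If $A = \exists v\, B$, then its membership in $\Pi^\ast_{n+1}$ can only come from the rule $\exists v\, \Sigma^\ast_n$, so $B \in \Sigma^\ast_n$; this already suffices to conclude $A \in \Delta^\ast_{n+1}$ via the $\Delta$-rule $\exists v\, \Sigma^\ast_n$, with no appeal to the induction hypothesis. Dually, if $A = \forall v\, B$, its membership in $\Sigma^\ast_{n+1}$ forces $B \in \Pi^\ast_n$ through the rule $\forall v\, \Pi^\ast_n$, and the $\Delta$-rule $\forall v\, \Pi^\ast_n$ applies. I expect the main obstacle to be exactly this second inclusion: one must be confident that membership in a class may be inverted, which is precisely the content of unambiguity (each formula has a single parse), and one must notice the asymmetry --- for $\exists$ the useful constraint comes from the $\Pi$-side and for $\forall$ from the $\Sigma$-side --- so that the body is pushed down to level $n$ and lands directly in the scope of the $\Delta^\ast_{n+1}$ quantifier productions.
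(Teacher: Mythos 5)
Your proof is correct and follows essentially the same route as the paper's: the first inclusion by induction on the generation of $\Delta^\ast_{n+1}$ using Fact~\ref{smallfact}, and the converse by induction on formulas, with the quantifier cases resolved by inverting the single applicable production of the ``opposite'' class to force the body into level $n$. The paper's own treatment of $\exists v\, B$ is word-for-word the observation you make, including the point that the induction hypothesis is not needed there.
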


\begin{proof}
That $\Delta_{n+1}^\ast \subseteq  \Sigma_{n+1}^\ast \cap \Pi_{n+1}^\ast$ is an easy induction based on Fact~\ref{smallfact}.
We prove the converse by ordinary induction on formulas. The atomic case and the propositional cases are immediate.
Suppose $A$  in $\Sigma_{n+1}^\ast \cap \Pi_{n+1}^\ast$  has the form $\exists v\, B$. Then
$B$ must be in $\Sigma^\ast_n$. It follows that $A$ is in $\Sigma^\ast_n$ and, thus, that $A$ is in $\Delta^\ast_{n+1}$.
\end{proof}

\noindent
We want a complexity measure $\rho(A)$ such that $\rho(A)$ is the smallest $n$ such that $A$ 
is in $\Sigma_n^\ast$. This measure is very close to the measure
that was employed in \cite{viss:unpr93}. We recursively define this measure by taking $\rho := \rho_{\exmo}$, where  
$\rho_{\exmo}$ is defined as follows:

\begin{itemize}
\item
$\rho_{\exmo}(A) := \rho_{\unmo}(A)= 1$, if $A$ is atomic.  
\item
$\rho_{\exmo}(\neg\, B) := \rho_{\unmo}(B)$, $\rho_{\unmo}(\neg\, B) := \rho_{\exmo}(B)$.
\item
$\rho_{\exmo}(B\wedge C) := {\sf max}(\rho_{\exmo}(B),\rho_{\exmo}(C))$, 
$\rho_{\unmo}(B\wedge C) := {\sf max}(\rho_{\unmo}(B),\rho_{\unmo}(C))$.
\item
$\rho_{\exmo}(B\vee C) := {\sf max}(\rho_{\exmo}(B),\rho_{\exmo}(C))$, 
$\rho_{\unmo}(B\vee C) := {\sf max}(\rho_{\unmo}(B),\rho_{\unmo}(C))$.
\item
$\rho_{\exmo}(B\to C) := {\sf max}(\rho_{\unmo}(B),\rho_{\exmo}(C))$, 
$\rho_{\unmo}(B\to C) := {\sf max}(\rho_{\exmo}(B),\rho_{\unmo}(C))$.
\item
$\rho_{\exmo}(\exists v\, B) := \rho_{\exmo} (B)$, 
$\rho_{\unmo}(\exists v\, B) := \rho_{\exmo}(B)+1$.
\item
$\rho_{\exmo}(\forall v\, B) :=  \rho_{\unmo}(B)+1$,
$\rho_{\unmo}(\forall v\, B) := \rho_{\unmo}(B)$.
\item
$\rho(A) := \rho_{\exmo}(A)$, $\rho_0(A) := {\sf max}(\rho_{\exmo}(A),\rho_{\unmo}(A))$.
\end{itemize}

\noindent
We verify the basic facts about $\rho$.

\begin{theorem}\label{fruitigesmurf}
$\rho_{\unmo} (A) \leq \rho_{\exmo}(A)+1$ and $\rho_{\exmo} (A) \leq \rho_{\unmo}(A)+1$.
\end{theorem}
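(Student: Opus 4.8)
The plan is to prove both inequalities simultaneously by structural induction on $A$, treating $\rho_{\exmo}$ and $\rho_{\unmo}$ as a coupled pair. The coupling is forced by the clauses for $\neg$ and $\to$, which interchange the two measures, so that neither inequality can be established on its own: to bound $\rho_{\unmo}(\neg\, B) = \rho_{\exmo}(B)$ by $\rho_{\exmo}(\neg\, B)+1 = \rho_{\unmo}(B)+1$ one needs precisely the \emph{other} inequality applied to $B$. Hence I would carry the conjunction of the two statements through the induction.

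The base case is immediate: if $A$ is atomic then $\rho_{\exmo}(A) = \rho_{\unmo}(A) = 1$, so both bounds hold with room to spare. For the propositional clauses $\wedge$ and $\vee$, the two measures are defined by the same ${\sf max}$ over the immediate subformulas, so each inequality for $B \wedge C$ (and likewise $B \vee C$) follows by applying the same inequality to $B$ and to $C$ and using monotonicity of ${\sf max}$ together with ${\sf max}(x,y)+1 = {\sf max}(x+1,y+1)$. For $\neg$ the inequalities cross over: $\rho_{\unmo}(\neg\, B) = \rho_{\exmo}(B) \leq \rho_{\unmo}(B)+1 = \rho_{\exmo}(\neg\, B)+1$ uses the induction hypothesis $\rho_{\exmo}(B) \leq \rho_{\unmo}(B)+1$, and the reverse direction is symmetric.

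The implication case $B \to C$ is the one that genuinely exercises both hypotheses at once, since $\rho_{\exmo}(B \to C) = {\sf max}(\rho_{\unmo}(B), \rho_{\exmo}(C))$ pairs a $\rho_{\unmo}$ on the antecedent with a $\rho_{\exmo}$ on the consequent. Here I would insert the bounds $\rho_{\unmo}(B) \leq \rho_{\exmo}(B)+1$ and $\rho_{\exmo}(C) \leq \rho_{\unmo}(C)+1$ and again factor the $+1$ through the ${\sf max}$. The quantifier clauses are the easiest and in fact yield stronger one-sided facts: for $\exists v\, B$ we have $\rho_{\exmo}(\exists v\, B) = \rho_{\exmo}(B)$ while $\rho_{\unmo}(\exists v\, B) = \rho_{\exmo}(B)+1$, so $\rho_{\exmo} \leq \rho_{\unmo}$ holds outright and the reverse bound is witnessed by the defining $+1$; dually, for $\forall v\, B$ one obtains $\rho_{\unmo} \leq \rho_{\exmo}$.

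I do not expect a real obstacle, since every clause reduces to a one-line inequality; the only care needed is bookkeeping of which induction hypothesis feeds which case. The mildly delicate point is simply to present the $\neg$ and $\to$ cases so that the mutual dependence of the two inequalities---the very reason the simultaneous formulation is necessary---is made transparent.
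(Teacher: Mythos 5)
Your proposal is correct and follows essentially the same route as the paper: a simultaneous structural induction on $A$ carrying both inequalities, with the crossing clauses ($\neg$, $\to$) consuming the opposite induction hypothesis and the quantifier clauses holding outright without it. The paper merely states the induction and displays the $\exists v\,B$ case (noting, as you do, that it needs no induction hypothesis), so your write-up is the same argument with the remaining cases spelled out.
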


\begin{proof}
The proof is by induction on $A$. We treat the case that $A = \exists v\, B$.
We have:
 $\rho_{\unmo} (\exists v\, B) =  \rho_{\exmo} (B)+1 = \rho_{\exmo}(\exists v\, B)+1$.
 Note that this does not use the induction hypothesis.
\end{proof}

\begin{theorem}
$\Sigma^\ast_n = \verz{A \mid \rho_{\exmo}(A) \leq n}$ and $\Pi^\ast_n = \verz{A \mid \rho_{\unmo}(A) \leq n}$.
It follows that, for $n>0$, we have $\Delta^\ast_n = \verz{A \mid \rho_0(A) \leq n}$
\end{theorem}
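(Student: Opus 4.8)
The plan is to prove the two characterizations $\Sigma^\ast_n = \{A \mid \rho_{\exmo}(A) \leq n\}$ and $\Pi^\ast_n = \{A \mid \rho_{\unmo}(A) \leq n\}$ simultaneously by a single induction on the structure of $A$, after which the statement about $\Delta^\ast_n$ will follow from the preceding theorem $\Delta^\ast_{n+1} = \Sigma^\ast_{n+1} \cap \Pi^\ast_{n+1}$ together with the definition $\rho_0(A) := {\sf max}(\rho_{\exmo}(A),\rho_{\unmo}(A))$. The two set-equalities are entangled because the recursive clauses for $\rho_{\exmo}$ refer to $\rho_{\unmo}$ (and vice versa), and because the grammar for $\Sigma^\ast_{n+1}$ refers to $\Pi^\ast$-classes (and vice versa); so I would phrase the induction hypothesis as the conjunction of both statements for all proper subformulas, and verify each grammatical production against the corresponding $\rho$-clause.

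First I would fix $n$ and argue the inclusions in each direction, or---more cleanly---prove the membership equivalences clause by clause. For the atomic base case, $A \in {\sf AT}$ gives $\rho_{\exmo}(A) = \rho_{\unmo}(A) = 1$, and indeed ${\sf AT} \subseteq \Sigma^\ast_{n+1} \cap \Pi^\ast_{n+1}$ for every $n \geq 0$ while ${\sf AT} \cap (\Sigma^\ast_0 \cup \Pi^\ast_0) = \emptyset$ since $\Sigma^\ast_0 = \Pi^\ast_0 = \emptyset$; this matches $\rho$-value exactly $1$. For the propositional connectives I would check that the grammar productions and the $\rho$-clauses agree: for instance $A = \neg\, B$ lies in $\Sigma^\ast_{n+1}$ exactly when $B \in \Pi^\ast_{n+1}$, which by induction means $\rho_{\unmo}(B) \leq n+1$, which is precisely $\rho_{\exmo}(\neg\,B) \leq n+1$; the conjunction, disjunction, and implication cases are analogous, using that the production for $\to$ in $\Sigma^\ast_{n+1}$ takes a $\Pi^\ast_{n+1}$ antecedent, matching $\rho_{\exmo}(B \to C) = {\sf max}(\rho_{\unmo}(B),\rho_{\exmo}(C))$.

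The quantifier cases carry the real content. For $A = \exists v\, B$, the $\Sigma^\ast_{n+1}$-grammar allows $\exists v\, \Sigma^\ast_{n+1}$, so membership in $\Sigma^\ast_{n+1}$ corresponds to $\rho_{\exmo}(B) \leq n+1$, i.e.\ $\rho_{\exmo}(\exists v\,B) \leq n+1$. For the $\Pi^\ast$ side, $\exists v\, B \in \Pi^\ast_{n+1}$ requires $B \in \Sigma^\ast_n$ via the production $\exists v\, \Sigma^\ast_n$, i.e.\ $\rho_{\exmo}(B) \leq n$, which is exactly $\rho_{\unmo}(\exists v\, B) = \rho_{\exmo}(B)+1 \leq n+1$. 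The dual production $\forall v\, \Pi^\ast_n$ inside $\Sigma^\ast_{n+1}$ and $\forall v\, \Pi^\ast_{n+1}$ inside $\Pi^\ast_{n+1}$ are handled symmetrically against the two $\forall$-clauses for $\rho$. I would be careful that the grammar admits an existential quantifier into $\Sigma^\ast_{n+1}$ only through a $\Sigma^\ast_{n+1}$ body, and into $\Pi^\ast_{n+1}$ only through a $\Sigma^\ast_n$ body, so that the $+1$ jumps in $\rho_{\unmo}(\forall v\,B)$ and $\rho_{\exmo}(\exists v\,B)$ are never triggered while the genuine alternation jumps are.

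The main obstacle is bookkeeping rather than conceptual: one must track the index $n$ precisely through the induction so that the $+1$ appearing in the ``wrong-polarity'' quantifier clauses of $\rho$ lines up exactly with the drop from $\Sigma^\ast_{n+1}$ to $\Sigma^\ast_n$ (respectively $\Pi^\ast_{n+1}$ to $\Pi^\ast_n$) in the grammar, and so that the base case at $n=0$ does not accidentally populate the empty classes $\Sigma^\ast_0, \Pi^\ast_0$. Once both set-equalities are established, the $\Delta^\ast_n$ claim for $n>0$ is immediate: $\rho_0(A) \leq n$ means $\rho_{\exmo}(A) \leq n$ and $\rho_{\unmo}(A) \leq n$, i.e.\ $A \in \Sigma^\ast_n \cap \Pi^\ast_n = \Delta^\ast_n$ by the previous theorem.
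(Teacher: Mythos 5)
Your proposal is correct and matches the paper's proof in all essentials: both arguments reduce to matching each grammar production against the corresponding $\rho$-clause, with the only substantive cases being the wrong-polarity quantifiers where the index drops from $n+1$ to $n$. The paper organizes this as an outer induction on $n$ with an inner induction on the generation of $\Sigma^\ast_{n+1}$ and $\Pi^\ast_{n+1}$, whereas you use a single structural induction with the claim quantified over all indices; these are interchangeable, provided (as you in effect do) the induction hypothesis is available at index $n$ as well as $n+1$ for the subformula.
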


\begin{proof}
We  prove, by induction on $n$, that:
$A\in \Sigma^\ast_n$ iff $\rho_{\exmo}(A) \leq n$ and $A\in \Pi^\ast_n$ iff $\rho_{\unmo}(A) \leq n$. 

\medent
The case of 0 is clear.
We prove by induction on the definition of $\Sigma^\ast_{n+1}$, that $A \in \Sigma^\ast_{n+1}$ iff $\rho_\exmo(A)\leq n+1$.
The atomic case, the propositional cases and the existential case are clear. Suppose $A = \forall v\, B$.
If $A$ is in $A \in \Sigma^\ast_{n+1}$, then $B$ is in $\Pi^\ast_n$. By the Induction Hypothesis, $\rho_{\unmo}(B) \leq n$,
so $\rho_{\exmo}(A) \leq n+1$. If $\rho_{\exmo}(A) \leq n+1$, then $\rho_{\unmo}(B) \leq n$. Hence, by the Induction Hypothesis,
$B \in \Pi^\ast_{n}$, so $A \in \Sigma^\ast_{n+1}$. The case of $\Pi^\ast_{n+1}$ is similar.
\end{proof}

\noindent
Let $\tau: \Sigma \to \Theta$ be a translation. We define $\rho^\ast(\tau)$ to be the maximum
 of $\rho_0(\delta_\tau)$ and the
  the $\rho_0 (P_\tau)$, for $P$ in $\Sigma$.
  If $K$ is an interpretation, then $\rho^\ast(K)  := \rho^\ast(\tau_K)$. 
  
\begin{theorem}
Let $\tau: \Sigma \to \Theta$.
We have: \\
$\rho_{\exmo}(A^\tau)  \leq \rho_{\exmo}(A) + \rho^\ast(\tau)$ and
$\rho_{\unmo}(A^\tau) \leq \rho_{\unmo}(A) + \rho^\ast (\tau)$.
\end{theorem}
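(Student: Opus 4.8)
The plan is to prove the two inequalities \emph{simultaneously} by induction on the structure of $A$, abbreviating $r := \rho^\ast(\tau)$. The reason both measures must travel together is that the clauses for $\rho_{\exmo}$ and $\rho_{\unmo}$ are mutually recursive at negation, implication and the quantifiers, so the bound on $\rho_{\exmo}(A^\tau)$ will in general rest on the inductive bound for $\rho_{\unmo}(B^\tau)$, and vice versa. The guiding idea is that a single quantifier on $x$ becomes, after translation, a \emph{homogeneous} block $\exists\vec x$ or $\forall\vec x$ of the same kind, relativised by one copy of $\delta_\tau$; since such a block adds only one quantifier alternation, the single ``$+1$'' already charged to the original quantifier has exactly enough room to absorb the cost of the relativiser.

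First I would dispatch the base and propositional cases. If $A$ is atomic, say $A=P(\vec x)$, then $A^\tau = P_\tau(\ldots)$ up to a renaming of variables (which does not affect $\rho$), so $\rho_{\exmo}(A^\tau),\rho_{\unmo}(A^\tau)\le \rho_0(P_\tau)\le r\le 1+r$, matching $\rho_{\exmo}(A)=\rho_{\unmo}(A)=1$; the cases $\top,\bot$ and the translated identity are covered identically. This is precisely where the $\rho_0={\sf max}(\rho_{\exmo},\rho_{\unmo})$ in the definition of $\rho^\ast$ is needed, so as to bound \emph{both} measures of $P_\tau$ at once. Since $(\cdot)^\tau$ commutes with the connectives, each propositional clause of $\rho_{\exmo}$ and $\rho_{\unmo}$ lines up on the two sides: the additive constant $r$ passes unharmed through every ${\sf max}$ and through the swap between $\rho_{\exmo}$ and $\rho_{\unmo}$ induced by $\neg$ and by the antecedent of $\to$, with the two halves of the induction hypothesis feeding precisely the two subformula measures demanded by each clause.

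The quantifier cases carry the content, and I would first record the elementary sublemma that for a nonempty block of like quantifiers $\rho_{\exmo}(\exists\vec x\,C)=\rho_{\exmo}(C)$, $\rho_{\unmo}(\exists\vec x\,C)=\rho_{\exmo}(C)+1$, $\rho_{\unmo}(\forall\vec x\,C)=\rho_{\unmo}(C)$ and $\rho_{\exmo}(\forall\vec x\,C)=\rho_{\unmo}(C)+1$; each follows by a one-line induction on the block length $m\ge 1$ from the single-quantifier clauses. For $A=\exists x\,B$ we have $A^\tau=\exists\vec x\,(\delta_\tau\wedge B^\tau)$, whence $\rho_{\exmo}(A^\tau)={\sf max}(\rho_{\exmo}(\delta_\tau),\rho_{\exmo}(B^\tau))$ and $\rho_{\unmo}(A^\tau)={\sf max}(\rho_{\exmo}(\delta_\tau),\rho_{\exmo}(B^\tau))+1$; feeding in $\rho_{\exmo}(\delta_\tau)\le\rho_0(\delta_\tau)\le r$ and the hypothesis $\rho_{\exmo}(B^\tau)\le\rho_{\exmo}(B)+r$ (and using $\rho_{\exmo}(B)\ge 0$ to absorb the $\delta_\tau$-term) gives the two required bounds $\rho_{\exmo}(B)+r=\rho_{\exmo}(A)+r$ and $\rho_{\exmo}(B)+r+1=\rho_{\unmo}(A)+r$. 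The case $A=\forall x\,B$, with $A^\tau=\forall\vec x\,(\delta_\tau\to B^\tau)$, is the exact dual: the implication clause yields $\rho_{\unmo}(\delta_\tau\to B^\tau)={\sf max}(\rho_{\exmo}(\delta_\tau),\rho_{\unmo}(B^\tau))$, the universal block fixes $\rho_{\unmo}$ and raises $\rho_{\exmo}$ by one, and the same two estimates close the case.

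The only real obstacle is this quantifier bookkeeping: one must check that the relativiser $\delta_\tau$ does \emph{not} buy an extra alternation. It enters the matrix under the same homogeneous block as $B^\tau$, so its weight is folded into a ${\sf max}$ and dominated by $r$, while the unique ``$+1$'' produced by the block is exactly the alternation already present in $\rho_{\unmo}(\exists x\,B)$, respectively $\rho_{\exmo}(\forall x\,B)$. The point to watch is that in every one of the four computations it is $\rho_{\exmo}(\delta_\tau)$ — the conjunct measure in the $\exists$ case, the antecedent measure in the $\forall$ case — that appears, so the bound $\rho_{\exmo}(\delta_\tau)\le\rho_0(\delta_\tau)\le r$ suffices and the directions of the inequalities match up.
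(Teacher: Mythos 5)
Your proof is correct and follows essentially the same route as the paper: a simultaneous induction on $A$ for both $\rho_{\exmo}$ and $\rho_{\unmo}$, with the propositional clauses passing the additive constant through the ${\sf max}$'s and the quantifier cases absorbing $\rho_0(\delta_\tau)\le\rho^\ast(\tau)$ into the matrix of the homogeneous block. Your explicit sublemma that a nonempty block of like quantifiers behaves like a single quantifier is a detail the paper elides (it writes $\rho_{\exmo}(\forall\vec v\,(\ldots))=\rho_{\unmo}(\ldots)+1$ directly), but it is exactly the right justification.
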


\begin{proof}
The proof is by induction on $A$.
The case of the atoms is trivial. 

\medent
We treat the case of implication and $\rho_{\exmo}$.
Suppose $A$ is $B\to C$. We have:
 \begin{eqnarray*}
  \rho_{\exmo}(A^\tau)   & = &  {\sf max}(\rho_{\unmo}(B^\tau),\rho_{\exmo}(C^\tau)) \\
  & \leq & {\sf max}(\rho_{\unmo}(B) + \rho^\star(\tau), \rho(C) + \rho^\ast(\tau)) \\
  & = &  {\sf max}(\rho_{\unmo}(B),\rho_{\exmo}(C)) + \rho^\ast(\tau) \\
  & = & \rho_{\exmo}(A) + \rho^\ast(\tau)
\end{eqnarray*}
The other cases concerning the propositional connectives are similar.

\medent
We treat the case for universal quantification and $\rho_{\exmo}$.
Suppose $A$ is $\forall v\, B$. We have:
 \begin{eqnarray*}
  \rho_{\exmo}(A^\tau)   & = &  \rho_{\exmo} (\forall \vec v\, (\delta_\tau(\vec v) \to B^\tau)) \\ 
& = &  \rho_{\unmo} (\delta_\tau(\vec v) \to B^\tau) +1 \\
& = & {\sf max}(\rho_{\exmo}(\delta_\tau(\vec v)), \rho_{\unmo} (B^\tau))+1 \\
& \leq & \rho_{\unmo}(B) + \rho^\ast(\tau)+1 \\
& = & \rho_{\exmo}(A) + \rho^\ast (\tau) 
\end{eqnarray*}
The remaining cases for the quantifiers are similar or easier.
\end{proof}

\subsection{Sequential Theories}

The notion of sequentiality is due to Pavel Pudl\'ak. 
See, e.g., \cite{pudl:prime83}, \cite{pudl:cuts85},  \cite{myci:latt90}, \cite{haje:meta91}.

To define sequentiality we use the auxiliary theory
 ${\sf AS}^+$ (Adjunctive Set Theory with extras).
The signature $\mathfrak A$ of ${\sf AS}^+$ consists of unary predicate symbols {\sf N} and {\sf Z},
binary predicate symbols $\in$, {\sf E}, $\leq$, $<$, {\sf S}, ternary predicate symbols
{\sf A} and {\sf M}. 
\begin{enumerate}[${\sf AS}^+$1]
\item
We have a set of axioms that provide a relative interpretation $\mathscr N$ of ${\sf S}^1_2$ in ${\sf AS}^+$, 
where {\sf N} represents the natural numbers,
{\sf E} represents numerical identity, {\sf Z} stands for zero modulo {\sf E}, {\sf A} stands for
addition modulo {\sf E}, and {\sf M} stands for multiplication modulo {\sf E}.
\item
 $\vdash \exists x \, \forall y\; y\not\in x$,
 \item
  $\vdash \forall x \, \exists y \, \forall z\; (z \in y \iff z = x)$,
  \item
  $\vdash \forall x, y\,  \exists z \, \forall u\, (u\in z \iff (u \in x \vee u \in y))$,
   \item
  $\vdash \forall x, y\,  \exists z \, \forall u\, (u\in z \iff (u \in x \wedge u \in y))$,
  \item
  $\vdash \forall x, y\,  \exists z \, \forall u\, (u\in z \iff (u \in x \wedge \neg \, u \in y))$.
  \end{enumerate}

\noindent
An important point is that we do not demand extensionality for our sets. A many-sorted version
of ${\sf AS}^+$ would be somewhat more natural. We refrain from developing it in this way here
to avoid the additional burden of working with interpretations between many-sorted theories.

A theory is sequential iff it  interprets the theory ${\sf AS}^+$
via a direct interpretation $\mathcal S$. We call such an $\mathcal S$ a \emph{sequence scheme}.

It is possible to work with an even simpler base theory.
The theory  ${\sf AS}$ is given by the following axioms. 
\begin{enumerate}[{\sf AS}1]
\item $\vdash \exists y \, \forall x\; x\not\in y$,
\item $\vdash \forall x\;\forall y \, \exists z\,\forall u\;(u\in z \iff (u\in y\vee u=x))$.
\end{enumerate}

\noindent
One can show that {\sf AS} is mutually directly interpretable with ${\sf AS}^+$.
 For details concerning the bootstrap see e.g. the textbook \cite{haje:meta91}
and also \cite{myci:latt90},  \cite{viss:card09}, \cite{viss:hume11}, \cite{viss:what13}.

\begin{remark}
We could work in a somewhat richer class of theories, the \emph{polysequential} theories.
See \cite{viss:what13}.

Let's say that an interpretation is \emph{$m$-direct}, if it is $m$-dimensional, if
its domain consists of all $m$-tuples of the original domain, and if identity is interpreted
as component-wise identity. A theory $U$ is $m$-sequential, if there is an $m$-direct
interpretation of {\sf AS} in $U$. A theory is polysequential, if it is $m$-sequential for 
some $m\geq 1$. Note that if we want the ${\sf AS}^+$ format, the interpretation of the
natural numbers should also be chosen to be $m$-dimensional for the given $m$.
The development given in the present paper also works with minor adaptations in the 
 polysequential case. 

It is known that there are polysequential theories that are not sequential. However, I only have
an artificial example. Every polysequential theory is polysequential without parameters, where
a sequential theory may essentially need an interpretation with parameters to witness its
sequentiality. (One raises the dimension to `eat up' the parameters.) Polysequential theories
are closed under bi-interpretability. Moreover, every polysequential theory is bi-interpretable with
a sequential one.
\end{remark}

\subsection{Satisfaction {\&} Reflection}\label{sare}
In this subsection, we develop partial satisfaction predicates for sequential theories with some care.
We prove the corresponding partial reflection principles. This subsection is rather long
 because it provides many details. The impatient reader could choose to proceed to Theorem~\ref{summumsmurf}, since
 that is the main result of the subsection that we will use in the rest of the paper. 

\medent
Consider any signature $\Theta$. We extend the signature $\mathfrak A$ of ${\sf AS}^+$ in a disjoint way
with $\Theta$ to, say, $\mathfrak A + \Theta$. Call the resulting theory (without any new axioms) ${\sf AS}^+(\Theta)$.

We work towards the definition of partial satisfaction predicates, We provide a series of definitions illustrative of what 
we need to get off the ground.
\begin{itemize}
\item
${\sf pair}(u,v,w) :\iff \exists a\, \exists b\, (\forall c \, (c \in w \iff (c=a \vee c = b)) \wedge \\
\hspace*{3.45cm}
 \forall d\, (d\in a \iff d=u) \wedge \forall e\, (e \in b \iff (e= u \vee e= v)))$.\\
 We can easily show that for all $u$ and $v$ there is a $w$ such that ${\sf pair}(u,v,w)$ and,
 whenever ${\sf pair}(u,v,w)$ and ${\sf pair}(u',v',w)$, then $u=u'$ and $v=v'$.
 Note that there may be several $w$ such that ${\sf pair}(u,v,w)$.
 \item
$ {\sf Pair}(w) :\iff \exists u\, \exists v\, {\sf pair}(u,v,w)$.
\item
$\pi_0(w,u) :\iff \exists v\, {\sf pair}(u,v,w)$. 
\item
$\pi_1(w,v) :\iff \exists u\, {\sf pair}(u,v,w)$.
\item
${\sf fun}(f) :\iff \forall w\in f\, ({\sf pair}(w) \wedge   \forall w'\in f\, \forall u \, (\pi_0(w,u) \wedge \pi_0(w', u) \to w=w'))$.\\
Note that we adapt the notion of function to our non-extensional pairing. We demand that there is at most one
witnessing pair for a given argument.  This choice makes resetting a function on an argument where it is defined
a simple operation: we subtract one pair and we add one.
  \item
  ${\sf dom}(f,x) :\iff {\sf fun}(f) \wedge \exists w\in f\, \pi_0(w,x)$.
   \item
  $f(u) \approx v :\iff {\sf fun}(f) \wedge \exists w\in f\, {\sf pair}(u,v,w)$.
\item
${\sf nfun}(\alpha) :\iff \forall w\in \alpha\, ({\sf pair}(w) \wedge  \forall u\, ( \pi_0(w,u) \to {\sf N}(u)) \wedge \\
\hspace*{1.8cm}
  \forall w'\in \alpha\, \forall u\, \forall u' \, ((\pi_0(w,u) \wedge \pi_0(w', u') \wedge u\mathrel{\sf E} u' ) \to w=w'))$.\\
  We note that ${\sf nfun}(f)$ implies ${\sf fun}(f)$.
   \item
  ${\sf ndom}(\alpha,a) :\iff {\sf nfun}(\alpha) \wedge \exists w\in f\, \exists b\, (a\mathrel{\sf E} b \wedge \pi_0(w,b))$.
  \item
  We fix a parameter $x^\star$.\\
  $\alpha[a] \approx v :\iff {\sf nfun}(\alpha) \wedge (\exists w \in \alpha\,  \exists b\, (a\mathrel{\sf E} b \wedge {\sf pair}(b,v,w))
  \vee \\
  \hspace*{7cm} (\neg\, {\sf ndom}(\alpha,a) \wedge v= x^\star))$.\\
  So outside of {\sf ndom} we 
  set the value of $\alpha$ to a default value. In this way
  we made it a total function on the natural numbers.\footnote{The need for a parameter is regrettable but in a sequential theory
  there need not be definable elements. Of course, we could set the value at $a$ outside the {\sf ndom} of $\alpha$ to
  $\alpha$, but that would mean that when we reset our function we would change all the values outside the {\sf ndom} too.
  One way to eliminate the parameter would be to make the default value an extra part of the data for the function. Then,
  a reset would keep the default value in tact. The option of working with partial functions is certainly feasible. However, e.g., clause (e) of the
  definition of adequate set would be more complicated.}
    \item
$\alpha\braee{a}\beta :\iff  \forall b \, ({\sf ndom}(\beta, b) \iff ({\sf ndom}(\alpha,b) \vee b \mathrel{\sf E} a  )) \wedge \\
\hspace*{1.7cm}
\forall b \, \forall x\, (({\sf N}(b) \wedge \neg\, a \mathrel{\sf E} b) \to (\alpha[b] \approx x \iff \beta[b]\approx x))$.\\
\item
$\alpha [a:y] \approx \beta :\iff \alpha \braee{a} \beta \wedge \beta[a] \approx y$.
\end{itemize}
 
\noindent We will use 
$\alpha$, $\beta$ to range over functions with numerical domains (elements of {\sf nfun}).
We  note that ${\sf AS}^+(\Theta)$ proves that the `operation' $\alpha \mapsto \alpha[a:y]$ is total
and that any output sequences are extensionally the same.

We employ a usual efficient coding of syntax in the interpretation $\mathscr N$ of ${\sf S}^1_2$. We have shown above
how to formulate things in order to cope with the fact that each number, set, pair, function and sequence can have several representatives.
The definition of satisfaction would be completely unreadable if we tried to adhere to this high
standard. Hence, we will work more informally pretending, for example, that each number has just one
representative. An assignment will simply be a numerical function where we restrict our attention to the
codes of variables in the domain.

\medent
A \emph{sat-sequence} is a triple of the form
$\tupel{i, \alpha, A}$, where: 
$i$ is $+$ or $-$ (coded as, say, 1 and 0), $\alpha$ is an assignment and $A$ is a formula.
An \emph{$a$-sat-sequence} is a sat-sequence
$\tupel{i, \alpha, A}$, where  $A$ is $\Sigma^\ast_a$, if $i=+$ and  $A$ is $\Pi^\ast_a$, if $i=-$.
We call the virtual class of all $a$-sat sequences $\mathcal K_a$.

A set $X$ is \emph{good} if, for all  numbers $b$ the virtual class $X\cap \mathcal K_b$ exists as a set.
We have:

\begin{lemma}{$({\sf AS}^+(\Theta))$.}
The good sets are closed under the empty sets, singletons, union, intersection and subtraction and are downwards closed
w.r.t. the subset ordering.
\end{lemma}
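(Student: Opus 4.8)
The plan is to prove each closure property separately, in every case fixing an arbitrary number $b$ and showing that the virtual class in question, intersected with $\mathcal K_b$, exists as a set. The essential point is that ${\sf AS}^+$ has no separation scheme: the only ways to manufacture sets are the five operations of axioms ${\sf AS}^+2$--${\sf AS}^+6$, together with the hypothesis that the given sets are good. Since $\mathcal K_b$ is merely a virtual class --- defined by a single formula in $u$ and $b$ expressing ``$u$ is a $b$-sat-sequence'' --- one may never apply an ${\sf AS}^+$ axiom to $\mathcal K_b$ directly. Instead I would reduce every expression of the shape $(\,\cdot\,)\cap\mathcal K_b$ to one of the five operations applied to sets that are already known to exist. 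For a fixed $b$ the condition $u\in\mathcal K_b$ is a definite formula, so all the distributive identities used below are pure propositional logic.

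For the base cases I would argue directly. The empty set exists by ${\sf AS}^+2$, and $\varnothing\cap\mathcal K_b$ has the same members as $\varnothing$, so $\varnothing$ is good. For a singleton $\{a\}$, which exists by ${\sf AS}^+3$, I would split on $a\in\mathcal K_b$ using excluded middle: if $a\in\mathcal K_b$ then $\{a\}$ itself witnesses $\{a\}\cap\mathcal K_b$, while if $a\notin\mathcal K_b$ then $\varnothing$ does. Either way the slice exists, so $\{a\}$ is good.

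For the combination steps I would exploit the distributivity of each operation over $\cap\,\mathcal K_b$. Assume $X$ and $Y$ are good and fix $b$. For union, $(X\cup Y)\cap\mathcal K_b$ and $(X\cap\mathcal K_b)\cup(Y\cap\mathcal K_b)$ have the same members; both slices exist by goodness, their union exists by ${\sf AS}^+4$, and $X\cup Y$ exists by ${\sf AS}^+4$ as well, so this witnesses goodness of $X\cup Y$. For intersection I would rewrite $(X\cap Y)\cap\mathcal K_b$ as $(X\cap\mathcal K_b)\cap Y$ and apply ${\sf AS}^+5$; for subtraction I would rewrite $(X\setminus Y)\cap\mathcal K_b$ as $(X\cap\mathcal K_b)\setminus Y$ and apply ${\sf AS}^+6$. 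Both of these in fact need only $X$ good. Finally, for downward closure, if $Z\subseteq X$ then $Z\cap\mathcal K_b$ has the same members as $Z\cap(X\cap\mathcal K_b)$, which exists by ${\sf AS}^+5$ applied to $Z$ and the slice $X\cap\mathcal K_b$; hence $Z$ is good.

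There is no deep obstacle here. The only things to watch are, first, that every step must be routed through the five set-forming axioms and the goodness hypotheses rather than through an illusory separation principle applied to $\mathcal K_b$, and second, that, because ${\sf AS}^+$ is non-extensional, all the identities above must be read as coincidences of membership conditions (``same members'') rather than as equalities of sets. The singleton is the only case needing a genuine case analysis, and it is dispatched by classical excluded middle on the condition $a\in\mathcal K_b$, which for fixed $a$ and $b$ is a definite first-order statement.
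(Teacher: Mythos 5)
Your proof is correct: the paper itself gives no argument here (it says only ``We leave the easy proof to the reader''), and your routing of every slice $(\,\cdot\,)\cap\mathcal K_b$ through the set-existence axioms ${\sf AS}^+2$--${\sf AS}^+6$ applied to sets already known to exist, together with the remark that all identities are coincidences of membership conditions rather than extensional equalities, is exactly the intended routine verification. The one point worth keeping explicit, which you do, is that for fixed $a$ and $b$ the condition $a\in\mathcal K_b$ is a genuine first-order formula so the case split for singletons is legitimate.
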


\begin{proof}
We leave the easy proof to the reader.
\end{proof}
 
\noindent
We count two sat-sequences $\tupel{i,\alpha,A}$ and $\tupel{j,\beta,  B}$ as \emph{extensionally equal}
if  (i) $i$ and $j$ are
{\sf E}-equal,
(ii) $\alpha$ and $\beta$ have the same functional behaviour on the natural numbers {\sf N}, 
and (iii) $A$ and $B$ are {\sf E}-equal. We say that a sequence $\sigma$ is \emph{of the form}
$\tupel{i,\alpha,A}$ if it is extensionally equal to a sequence $\tau$ with $\tau_0 = i$, $\tau_1 = \alpha$  and $\tau_2 = A$.

We  define \emph{$n+1$-adequacy} and ${\sf sat}_n$ by external recursion on $n$.
We define ${\sf sat}_0(+,\alpha, A) :\iff \bot$ and ${\sf sat}_0(-,\alpha, A) :\iff \top$.
We define ${\sf sat}_{n+1}(i, \alpha,A)$ iff, for some $n+1$-adequate set $X$, we have
$\tupel{i, \alpha,A} \in X$.
A set $X$ is \emph{$n+1$-adequate}, if it is good, if  its elements are sat-sequences, and if it satisfies
 the following clauses: 
\begin{enumerate}[a.]
\item
If  a sequence of the form $\tupel{+,\beta,P(v_0,v_1)}$ is in $X$, then
$P(\beta(v_0),\beta(v_1))$.\footnote{Our variables are coded as numbers. Conceivably not
all numbers code variables. The values of $\beta$ on non-variables are simply \emph{don't care}.}
\\ Similarly, for other atomic formulas including
$\top$ and $\bot$.
\item
If  a sequence of the form $\tupel{-,\beta,P(v_0,v_1)}$ is in $X$, then
$\neg\, P(\beta(v_0),\beta(v_1))$.\\ Similarly, for other atomic formulas including
$\top$ and $\bot$.
\item
If  a sequence of the form $ \tupel{+,\beta,\neg\, B}$ is in $X$, then
a sequence of the form $ \tupel{-,\beta, B}$ is in $X$.
\item
If a sequence of the form $\tupel{-,\beta,\neg\, B}$ is in $X$, then
 a sequence of the form $ \tupel{+,\beta, B}$ is in $X$.
  \item
  If a sequence of the form $\tupel{+,\beta,(B\wedge C)}$ is in $X$, then
  sequences of the form $\tupel{+,\beta,B}$ and
  $ \tupel{+,\beta,C}$ are in $X$.
    \item
  If a sequence of the form $\tupel{-,\beta,(B\wedge C)}$ is in $X$, then
 a sequence of the form $\tupel{-,\beta,B}$ is in $X$ or
a sequence of the form  $  \tupel{-,\beta,C}$ is in $X$.
   \item
  If a sequence of the form $\tupel{+,\beta,(B\vee C)}$ is in $X$, then
 a sequence of the form $  \tupel{+,\beta,B}$ is in $X$ or
  a sequence of the form $  \tupel{+,\beta,C}$ is in $X$.
   \item
  If a sequence of the form $\tupel{-,\beta,(B\vee C)}$ is in $X$, then
  sequences of the form $  \tupel{-,\beta,B}$ and
  $  \tupel{-,\beta,C}$ are in $X$.
   \item
  If a sequence of the form $ \tupel{+,\beta,(B\to C)}$ is in $X$, then
  a sequence of the form $ \tupel{-,\beta,B}$ is in $X$ or
a sequence of the form  $ \tupel{+,\beta,C}$ is in $X$. 
  \item
  If a sequence of the form $\tupel{-,\beta,(B\to C)}$ is in $X$, then
   sequences of the form $  \tupel{+,\beta,B}$ and
  $  \tupel{-,\beta,C}$ are in $X$.
   \item
  If a sequence of the form $\tupel{+,\beta,\exists v\, B}$ is in $X$, then,
   for some $\gamma$ with $\beta\braee{v} \gamma$, a sequence of the form $\tupel{+,\gamma,B}$ is in $X$.
   \item
  If a sequence of the form $\tupel{-,\beta,\exists v\, B}$ is in $X$, then
$ \neg\,  {\sf sat}_n(+,\beta,\exists v\, B)$.
 \item
  If  a sequence of the form $ \tupel{+,\beta, \forall v\, B}$ is in $X$, then
  $\neg\, {\sf sat}_n(-, \beta, \forall v\, B)$.
   \item
  If a sequence of the form $\tupel{-,\beta,\forall v\, B}$ is in $X$, then,
  for some $\gamma$ with $\beta\braee{v}\gamma$, a sequence of the form $  \tupel{-,\gamma,B}$ is in $X$.
  \end{enumerate}
  
  \noindent
  Note that if $\sigma$ and $\tau$ are extensionally equal and if
  $X$ is $n$-adequate and $\sigma$ is in $X$, then the result of replacing $\sigma$ in $X$ by $\tau$ is again
  $n$-adequate.
  
  \medent
  We will often write  $\alpha \models_n^i A$ for: ${\sf sat}_n(i, \alpha, A)$.
  The relation ${\sf sat}_n$, when restricted to $\mathcal K_n$, will have a number of 
  desirable properties.   We write ${\sf sat}^\ast_n$ for ${\sf sat}_n \cap \mathcal K_n$. 
  
  \medent
  We note that we have implicitly given a formula $\Phi_0(\mathcal X,i,\alpha, A)$, where $\mathcal X$ is a second order variable with:
  \[{\sf sat}_{n+1}(i, \alpha,A) = \Phi_0( {\sf sat}_n,i, \alpha, A).\]
  Thus, for some fixed standard $\constant{czero}$, we have %$\mathfrak m$
  $\rho({\sf sat}_{n+1}(u,v,w)) = \rho({\sf sat}_n(u,v,w)) + \constantref{czero}$.
  It follows that $\rho({\sf sat}_{n}(u,v,w)) =  \constantref{czero} n +  \constant{cone}$, for some fixed standard number $\constantref{cone}$.
  % \mathfrak  c_0  \constant{czero}
  % \mathfrak c_1 \constant{cone}
 
  \begin{remark}
  We note that $\mathcal X$ occurs twice in the formula $\Phi_0(\mathcal X,k,\alpha, A)$ described above. This has no 
  effect on the growth of the complexity of the formula ${\sf sat}_n$ but it makes the \emph{number of symbols}
 of the formula ${\sf sat}_n$ grow exponentially in $n$. For the purposes of this paper, this is good enough.
However, a slightly more careful rewrite of our definition reduces the number of occurrences of $\mathcal X$ to one.
As a consequence, we can get the number of symbols of ${\sf sat}_n$ linear in $n$. So, the code of ${\sf sat}_n$ will be
bounded by a polynomial in $n$, assuming we use an efficient G\"odel numbering.
  \end{remark}
  
 \noindent Our next step is to verify in ${\sf AS}^+(\Sigma)$ some good properties of $n+1$-adequacy and ${\sf sat}_n$. 
 We first show that $n$-adequacy is preserved under certain operations.
 
 \begin{theorem}\label{adeclos}
 The $n$-adequate sets are closed under unions and under intersection with the virtual class of
 $a$-sat-sequences, for any $a$. 
 \end{theorem}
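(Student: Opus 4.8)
The plan is to unwind the definition of $k$-adequacy (I write $k$ for the adequacy level, so the clauses refer to the fixed predicate ${\sf sat}_{k-1}$) and verify its three requirements --- goodness, that every element is a sat-sequence, and clauses (a)--(n) --- separately for each operation. The organizing observation is that clauses (a)--(b) and (l)--(m) have conclusions \emph{external} to the set (an atomic fact, or $\neg\,{\sf sat}_{k-1}(\dots)$), whereas clauses (c)--(k),(n) demand that certain related sat-sequences lie \emph{in the set itself}; I would route the bookkeeping along this split. No induction on $k$ is needed, since the clauses are the same at every level.

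For the union $X\cup Y$ of two $k$-adequate sets everything follows from monotonicity. Goodness comes from the good-sets lemma, and plainly every element is a sat-sequence. For an external-conclusion clause, a triggering sequence $\sigma\in X\cup Y$ lies in one of $X,Y$, and adequacy of that set already delivers the external fact. For a clause demanding related sequences, again $\sigma$ lies in, say, $X$, so by adequacy of $X$ the required sequences lie in $X\subseteq X\cup Y$. Finite unions follow by iteration.

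The real content is the intersection $X\cap\mathcal K_a$. Goodness: by the definition of good, $X\cap\mathcal K_a$ exists as a set, and to see it is \emph{itself} good I would use that the complexity classes are nested (Fact~\ref{smallfact}), so that $\mathcal K_a\cap\mathcal K_b=\mathcal K_{{\sf min}(a,b)}$ and hence $(X\cap\mathcal K_a)\cap\mathcal K_b=X\cap\mathcal K_{{\sf min}(a,b)}$ is a set. The heart is the clause check. Consider clause (c): if $\tupel{+,\beta,\neg\,B}\in X\cap\mathcal K_a$ then $\neg\,B\in\Sigma^\ast_a$, and since the only production yielding a negation is $\neg\,\Pi^\ast_a$, we get $B\in\Pi^\ast_a$; so the sequence $\tupel{-,\beta,B}$ supplied by adequacy of $X$ already lies in $\mathcal K_a$, hence in $X\cap\mathcal K_a$. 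The same pattern covers every structural clause: $(\Sigma^\ast_a\wedge\Sigma^\ast_a)$ keeps both conjuncts of (e) in $\Sigma^\ast_a$; $(\Pi^\ast_a\to\Sigma^\ast_a)$ puts the antecedent of (i) into $\mathcal K_a$ with sign $-$ and the consequent with sign $+$; and $\exists v\,\Sigma^\ast_a$, $\forall v\,\Pi^\ast_a$ keep the matrices of (k),(n) in the right class, dually on the $\Pi$-side. The external-conclusion clauses need nothing from $\mathcal K_a$, as $X\cap\mathcal K_a\subseteq X$ and $X$ is adequate. The case $a=0$ is trivial since $\mathcal K_0=\emptyset$.

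The one place demanding care --- and the reason the classes were set up with unique generation histories and the paired $\Sigma^\ast/\Pi^\ast$ sign discipline --- is exactly this clause check for the intersection: one must know that decomposing a formula of $\Sigma^\ast_a$ (resp.\ $\Pi^\ast_a$) by its principal connective leaves every immediate constituent, \emph{with the sign dictated by the corresponding clause}, inside $\mathcal K_a$. The sign-flips at negations and at the antecedent of an implication are matched precisely by the $\Sigma\leftrightarrow\Pi$ swaps in the grammar, so no constituent escapes the class. Extensionality of sat-sequences plays no role beyond letting us read ``of the form'' and ``in $\mathcal K_a$'' as {\sf E}-invariant notions.
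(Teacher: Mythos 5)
Your proposal is correct and follows the same route as the paper, which disposes of the theorem in two sentences: unions are handled by closure of good sets under unions, and intersection with $\mathcal K_a$ by the definition of good together with the fact that the classes $\Sigma^\ast_a$, $\Pi^\ast_a$ are closed under taking immediate subformulas with the appropriate sign flips. Your clause-by-clause verification (including the split between external-conclusion clauses and membership-demanding clauses, and the nestedness of the $\mathcal K_a$) is exactly the detail the paper leaves implicit.
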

 
\begin{proof}
Closure under unions is  immediate given that good sets are closed under unions.
Closure under restriction to $a$-sat-sequences is immediate by the definition of \emph{good} and the fact that
our formula classes are closed under subformulas.  
\end{proof}
 
 \noindent
We prove a theorem connecting ${\sf sat}^\ast_k$ and ${\sf sat}^\ast_n$, for $k<n$.
We remind the reader that $\mathcal K_k$ is the virtual class of all $k$-sat-sequences
and ${\sf sat}^\ast_n = {\sf sat}_n \cap \mathcal K_n$.

\begin{theorem}[${\sf AS}^+(\Theta)$]\label{babysmurf}
Suppose $k < n$. Then, ${\sf sat}^\ast_k  = {\sf sat}^\ast_n \cap \mathcal K_k$.
\end{theorem}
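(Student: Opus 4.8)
The plan is to prove, by \emph{external} induction on $n$ (so that for each standard $n$ we produce a separate ${\sf AS}^+(\Theta)$-proof), the statement $S(n)$: for every $k<n$, the predicates ${\sf sat}_k$ and ${\sf sat}_n$ agree on the virtual class $\mathcal K_k$ of $k$-sat-sequences. This is exactly the theorem once we record two preliminary observations. First, since $\Sigma^\ast_k\subseteq\Sigma^\ast_n$ and $\Pi^\ast_k\subseteq\Pi^\ast_n$ whenever $k\leq n$ (from the characterizations $\Sigma^\ast_n=\verz{A\mid\rho_\exmo(A)\leq n}$ and $\Pi^\ast_n=\verz{A\mid\rho_\unmo(A)\leq n}$), we have $\mathcal K_k\subseteq\mathcal K_n$; hence ${\sf sat}^\ast_n\cap\mathcal K_k={\sf sat}_n\cap\mathcal K_n\cap\mathcal K_k={\sf sat}_n\cap\mathcal K_k$, whereas ${\sf sat}^\ast_k={\sf sat}_k\cap\mathcal K_k$, so the claimed equality is precisely the agreement of ${\sf sat}_k$ and ${\sf sat}_n$ on $\mathcal K_k$. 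Second, the base case $S(1)$ is vacuous, since $\Sigma^\ast_0=\Pi^\ast_0=\emptyset$ forces $\mathcal K_0=\emptyset$.

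The heart of the induction step is the sub-claim that ${\sf sat}_n$ and ${\sf sat}_{n+1}$ agree on $\mathcal K_n$. Granting it, $S(n+1)$ follows cheaply: for $k=n$ it is the sub-claim itself, and for $k<n$ we restrict the sub-claim to $\mathcal K_k\subseteq\mathcal K_n$ and compose with $S(n)$ to obtain ${\sf sat}_{n+1}={\sf sat}_n={\sf sat}_k$ on $\mathcal K_k$.

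To establish the sub-claim I would first show that for any $X\subseteq\mathcal K_n$, the set $X$ is $(n+1)$-adequate iff it is $n$-adequate. The two notions share the purely structural clauses (a)--(k) and (n), which never mention ${\sf sat}$, so they can differ only in the two wrong-side quantifier clauses (l) and (m): for $(n+1)$-adequacy these refer to ${\sf sat}_n$, whereas for $n$-adequacy they refer to ${\sf sat}_{n-1}$. The crucial level-count is that the sequences these clauses refer to always lie in $\mathcal K_{n-1}$. Indeed, if $\tupel{-,\beta,\exists v\,B}\in X\subseteq\mathcal K_n$ then $\rho_\unmo(\exists v\,B)=\rho_\exmo(B)+1\leq n$, so $\rho_\exmo(B)\leq n-1$ and $\tupel{+,\beta,\exists v\,B}\in\mathcal K_{n-1}$; and if $\tupel{+,\beta,\forall v\,B}\in X\subseteq\mathcal K_n$ then $\rho_\exmo(\forall v\,B)=\rho_\unmo(B)+1\leq n$, so $\rho_\unmo(B)\leq n-1$ and $\tupel{-,\beta,\forall v\,B}\in\mathcal K_{n-1}$. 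By the induction hypothesis $S(n)$, instantiated at $k=n-1$, the predicates ${\sf sat}_n$ and ${\sf sat}_{n-1}$ agree on $\mathcal K_{n-1}$, so clauses (l) and (m) impose literally the same demand under either notion of adequacy. Hence the two adequacy notions coincide on subsets of $\mathcal K_n$.

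Finally I would deduce the sub-claim from this equivalence together with Theorem~\ref{adeclos}. If ${\sf sat}_{n+1}(\sigma)$ with $\sigma\in\mathcal K_n$, choose an $(n+1)$-adequate $Y\ni\sigma$; by Theorem~\ref{adeclos} the set $Y\cap\mathcal K_n$ is still $(n+1)$-adequate, contains $\sigma$, and lies inside $\mathcal K_n$, so by the equivalence it is $n$-adequate and witnesses ${\sf sat}_n(\sigma)$. The converse direction is symmetric, cutting an $n$-adequate witness down to $\mathcal K_n$. I expect the only real obstacle to be exactly the level-counting in clauses (l) and (m)---checking that a wrong-side quantifier strips off precisely one complexity level, so that the referenced sequence lands in $\mathcal K_{n-1}$ and the induction hypothesis becomes applicable. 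The remaining bookkeeping, namely that subformula closure keeps the structural decomposition of an $n$-sat-sequence inside $\mathcal K_n$, is exactly what underwrites Theorem~\ref{adeclos} and may be cited rather than re-proved.
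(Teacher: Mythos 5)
Your proof is correct and follows essentially the same route as the paper's: an external induction on $n$ in which Theorem~\ref{adeclos} is used to cut a witnessing adequate set down to the relevant $\mathcal K$-class, and the only real work is in clauses (l) and (m), where the wrong-side quantifier drops the referenced sequence one complexity level so that the induction hypothesis applies. The only organizational difference is that you first establish agreement of the consecutive levels ${\sf sat}_n$ and ${\sf sat}_{n+1}$ on $\mathcal K_n$ and then chain down to smaller $k$, whereas the paper handles arbitrary $k<n+1$ directly in the induction step by applying the hypothesis to the pair $(k-1,n)$.
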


\begin{proof}
The proof is by external induction on $n$. The case that $n=0$ is trivial.

\medent 
Suppose $X$ is $n+1$-adequate set. Let $Y:= X\cap \mathcal K_k$.
We note that $Y$ is a set by Theorem~\ref{adeclos}.
We claim that $Y$ is $k$-adequate.  
It is clear that $Y$ satisfies all clauses for a $k$-adequate set automatically except (l) and (m).
Let's zoom in on (l).  
Suppose a sequence of the form $\tupel{-,\beta,\exists v\, B}$ is in $Y$.
From this it follows that $k\neq 0$.
Since $X$ is $n+1$-adequate, it follows that:
$ \neg\,  {\sf sat}_{n}(+,\beta,\exists v\, B)$. 
By the induction hypothesis, we find that
$ \neg\,  {\sf sat}_{k-1}(+,\beta,\exists v\, B)$. Hence, the clause for $k$-adequacy is fulfilled.
Clause (m) is similar.

\medent
Conversely, suppose $Z$ is $k$-adequate. Let $W:= Z \cap \mathcal K_k$.
The argument that $W$ is also $n+1$-adequate,
is analogous to the argument above.
\end{proof}

\noindent
We note that in the proof of Theorem~\ref{babysmurf}, we could as well do the induction on $k$.
This observation is important in case we study models with a full satisfaction predicate.
In this context, we can replace  $n$ by a non-standard number and still get our result for 
standard $k$.

In the following theorem we prove the commutation conditions for ${\sf sat}_{n+1}$. 

\begin{theorem}[${\sf AS}^+(\Theta)$]\label{commu}
We have:
\begin{enumerate}[a.]
\item
$\beta \models_{n+1}^+ P(v_0,v_1)$ iff $P(\beta(v_0),\beta(v_1))$,\\
and similarly for the other atomic formulas including $\top$ and $\bot$,
\item
$\beta \models_{n+1}^- P(v_0,v_1)$ iff $ \neg \, P(\beta(v_0),\beta(v_1))$,\\
and similarly for the other atomic formulas including $\top$ and $\bot$,
\item
  $\beta \models_{n+1}^+ \neg\, B$ iff $\beta \models_{n+1}^- B$,
\item
$\beta \models^-_{n+1} \neg\, B$ iff $\beta \models^+_{n+1} B$,
\item
  $\beta \models_{n+1}^+ B \wedge C$ iff $\beta \models_{n+1}^+ B$ and $\beta \models_{n+1}^+ C$,
  \item
  $\beta \models_{n+1}^- B \wedge C$ iff $\beta \models_{n+1}^- B$ or $\beta \models_{n+1}^- C$,
\item
  $\beta \models_{n+1}^+ B \vee C$ iff $\beta \models_{n+1}^+ B$ or $\beta \models_{n+1}^+ C$,
  \item
  $\beta \models_{n+1}^- B \vee C$ iff $\beta \models_{n+1}^- B$ and $\beta \models_{n+1}^- C$,  
  \item
  $\beta \models_{n+1}^+ B \to C$ iff $\beta \models_{n+1}^- B$ or $\beta \models_{n+1}^+ C$,
  \item
  $\beta \models_{n+1}^- B \to C$ iff $\beta \models_{n+1}^+ B$ and $\beta \models_{n+1}^- C$,  
  \item
  $\beta \models_{n+1}^+ \exists v\, B $ iff, for some $\gamma$ with $\beta\braee{v} \gamma$, we have
  $\gamma \models_{n+1}^+ B$,
  \item
  $\beta \models_{n+1}^- \exists v\, B$ iff $\beta \not \models_{n}^+  \exists v\,  B$,  
  \item
  $\beta \models_{n+1}^+ \forall v\, B$ iff $\beta \not \models_{n}^-  \forall v\,  B$, 
  \item
  $\beta \models_{n+1}^- \forall v\, B $ iff, for some $\gamma$ with $\beta\braee{v} \gamma$, we have
  $\gamma \models_{n+1}^- B$.
\end{enumerate}
\end{theorem}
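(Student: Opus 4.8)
The plan is to prove each of the fourteen biconditionals (a)--(n) by splitting it into its two directions and exploiting the asymmetry built into the definitions: $n{+}1$-adequacy is specified by the one-directional closure clauses (a)--(n), whereas ${\sf sat}_{n+1}(i,\alpha,A)$ quantifies \emph{existentially} over $n{+}1$-adequate sets containing $\tupel{i,\alpha,A}$. The left-to-right direction of every case is then almost free: one fixes an $n{+}1$-adequate $X$ witnessing the hypothesis and simply reads off the matching clause. For example, if $\beta\models_{n+1}^+ B\wedge C$, then the witnessing $X$ contains $\tupel{+,\beta,B\wedge C}$, so by clause (e) it also contains $\tupel{+,\beta,B}$ and $\tupel{+,\beta,C}$, and the same $X$ witnesses $\beta\models_{n+1}^+ B$ and $\beta\models_{n+1}^+ C$. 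The atomic cases (a), (b) are immediate from clauses (a), (b), and the two quantifier-reducing cases (l), (m) are immediate from clauses (l), (m), which already speak about ${\sf sat}_n$.

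For the right-to-left directions I would build a witnessing adequate set by hand. The recurring move is: take adequate witnesses for the immediate subformulas and form their union together with the single new top-level sequence. For the conjunctive clause (e) I set $X := Y_B \cup Y_C \cup \{\tupel{+,\beta,B\wedge C}\}$, where $Y_B,Y_C$ witness $\beta\models_{n+1}^+ B$ and $\beta\models_{n+1}^+ C$; for a disjunctive clause such as (g) a single witness suffices; for the existential clause (k) I adjoin $\tupel{+,\beta,\exists v\, B}$ to a witness of $\gamma\models_{n+1}^+ B$ and use that same $\gamma$ (recall $\beta\braee{v}\gamma$) to meet the existential demand of clause (k); the universal-negative clause (n) is symmetric. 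The cases (a), (b), (l), (m) need no subformula witness at all: here the singleton $\{\tupel{i,\beta,A}\}$ is already $n{+}1$-adequate, since the only clause it can trigger is the relevant external condition --- $P(\beta(v_0),\beta(v_1))$ for (a), and $\neg\,{\sf sat}_n(+,\beta,\exists v\, B)$ resp.\ $\neg\,{\sf sat}_n(-,\beta,\forall v\, B)$ for (l), (m) --- which is exactly the assumption in force.

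The crux of these constructions is verifying that the enlarged sets really are $n{+}1$-adequate, and two observations carry the load. First, by the closure lemma for good sets, singletons and unions of good sets are good, so every candidate set is good and consists of sat-sequences; combining two subwitnesses is moreover handled by Theorem~\ref{adeclos}, which already gives closure of adequate sets under unions. Second, adequacy is \emph{monotone under adjoining one sequence}: each clause is indexed by the syntactic shape and sign of a sequence, so a newly added sequence activates exactly one clause, whose conclusion is either an external fact about that sequence alone (which we have arranged to hold) or the presence of prescribed subsequences in the set (which we have deliberately placed there). Conversely, every clause already satisfied by an element of a subwitness $Y$ survives the enlargement, because the closure clauses assert membership of subsequences and membership only persists in passing to a superset, while the external clauses (a), (b), (l), (m) depend on the offending sequence and not on the ambient set. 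The note preceding the theorem, that adequacy is invariant under replacing a sequence by an extensionally equal one, lets me read ``a sequence of the form'' liberally throughout.

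The main obstacle is bookkeeping rather than mathematics: one must confirm that adjoining the top-level sequence neither trips an external clause (it cannot, since the added sequence has a fixed form and we verify its one condition) nor breaks any subwitness clause (it cannot, by the monotonicity observation). It is worth stressing that no induction on $n$ is needed inside this theorem: the propositional and existential/universal cases relate ${\sf sat}_{n+1}$ to itself, and the reducing cases (l), (m) simply unfold the definition of ${\sf sat}_{n+1}$ in terms of ${\sf sat}_n$; Theorem~\ref{babysmurf} is available should one wish to cross-check against the starred levels, but it is not required here.
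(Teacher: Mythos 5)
Your proposal is correct and follows essentially the same route as the paper: the left-to-right directions are read off from the adequacy clauses of a witnessing set, and the right-to-left directions are obtained by adjoining the top-level sat-sequence to the union of the subformula witnesses (or taking a bare singleton in the atomic and quantifier-reducing cases), exactly as in the paper's treatment of its illustrative clauses (e), (k) and (l). Your explicit check that the enlarged sets remain $n{+}1$-adequate merely spells out what the paper dismisses as immediate.
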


\begin{proof}
We will treat the illustrative clauses (e), (k) and (l). In the first two cases the right-to-left direction is trivial.

\medent
Ad (e). Suppose $X$ witnesses that $\beta \models^+_{n+1} B$ and
$Y$ witnesses that $\beta \models ^+_{n+1} C$. Let $\sigma$ be a triple of the form
$\tupel{\beta, +,(B\wedge C)}$. Let $Z$ be a union of $X$ and $Y$ and a singleton with element
$\sigma$. It is immediate that $Z$ witnesses $\beta \models^+_{n+1} B \wedge C$.

\medent
Ad (k). Suppose that
$\beta \braee{v} \gamma$ and that $X$ witnesses that $\gamma \models_{n+1}^+ B$.
Let $\sigma$ be of the form $\tupel{\beta, +,\exists v\, B}$.
Let $Y$ be a union of $X$ and a singleton with element $\sigma$. Then $Y$ witnesses
$\beta \models^+_{n+1} \exists v\, B$.

\medent
Ad (l). Suppose  $\beta \not \models_{n}^+ B$. Let
$\sigma$ be a triple of the form $\tupel{\beta, -, \exists v\, B}$. Let $X$ be a singleton
with element $\sigma$. Then $X$ witnesses $\beta \models^-_{n+1} \exists v\, B$.
Conversely, if $Y$ witnesses $\beta \models^-_{n+1} \exists v\, B$, then we must
have $\neg \, {\sf sat}_n(\beta,+, \exists v\, B)$.
\end{proof}

\noindent 
We note that the commutation conditions are inherited by ${\sf sat}^\ast_{n+1}$, provided
that the formulas in the conditions belong to the right classes. 

\medent
 The commutation conditions proven in Theorem~\ref{commu}
 are not yet full commutation conditions. The defect is in the clauses
 (l) and (m). Let's zoom in on (m):
 \begin{itemize}
 \item
 $\beta \models_{n+1}^+ \forall v\, B$ iff $\beta \not \models_{n}^-  \forall v\,  B$.
 \end{itemize}
 The right-hand-side is equivalent to: for all $\gamma$ with $\beta\braee{v} \gamma$, we have
  $\gamma \not \models_{n+1}^- B$.  To get the desired commutation condition, we would like
  to move from $\gamma \not \models_{n+1}^- B$ to $\gamma  \models_{n+1}^+ B$.
  We have seen in Theorem~\ref{babysmurf} that to make our predicates behave in
  expected ways, it is better to consider the formulas in their `intended range'. So 
  what if $\forall v \, B$ is in $\Sigma^\ast_{n+1}$? In this case $B$ must be in $\Pi^\ast_n$
  and hence in $\Delta^\ast_{n+1}$. So is it true that if $C$ is in $\Delta^\ast_{n+1}$, 
  then $\alpha \not \models_{n+1}^- C$ iff $\alpha  \models_{n+1}^+ C$?
  To prove this we need induction, which we do  not have available in
  ${\sf AS}^+(\Theta)$. The solution is to move to a cut. 
 To realize this, we define a second measure of complexity $\nu$ (depth of connectives) as follows:
$\nu(A) :=0$ if $A$ is atomic, $\nu(\neg\, A) := \nu(\exists v\, A) := \nu(\forall v\, A) :=  \nu (A)+1$ and
 $\nu(A\circ B) := {\sf max}(\nu(A),\nu(B))+1$, where
$\circ$ is a binary propositional connective.
Let $\Gamma_x := \verz{A \mid \nu(A) \leq x}$.
We define:
\begin{itemize}
\item $J^\dag_{n+1}$ is the virtual class of all numbers $x$ such that, for all $\alpha$ and
 for all $C \in \Delta^\ast_{n+1}\cap \Gamma_x$, we have $\alpha \not \models_{n+1}^- C$ iff $\alpha  \models_{n+1}^+ C$.
\end{itemize}
We have:

\begin{theorem}[${\sf AS}^+(\Theta)$]
$J^\dag_{n+1}$ contains 0, is closed under successor and is downwards closed w.r.t. $\leq$.
\end{theorem}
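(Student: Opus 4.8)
The plan is to verify the three asserted closure properties one at a time, noting that downward closure and membership of $0$ are essentially immediate and that all the real content lies in closure under successor. Downward closure is trivial: if $y \le x$ then $\Gamma_y \subseteq \Gamma_x$, and since membership of $x$ in $J^\dag_{n+1}$ is a universal statement ranging over $\Delta^\ast_{n+1}\cap\Gamma_x$, it only weakens as $x$ decreases, so $x \in J^\dag_{n+1}$ and $y\le x$ give $y \in J^\dag_{n+1}$. For $0 \in J^\dag_{n+1}$, observe that $\Gamma_0$ consists exactly of the atomic formulas (including $\top,\bot$), all of which lie in $\Delta^\ast_{n+1}$; for such $C$ the equivalence ``$\alpha \not\models^-_{n+1} C$ iff $\alpha \models^+_{n+1} C$'' is read off directly from clauses (a) and (b) of Theorem~\ref{commu}, since both sides reduce to the truth of the underlying atom.

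The real work is closure under successor. Assume $x \in J^\dag_{n+1}$, and let $C \in \Delta^\ast_{n+1}$ with $\nu(C) \le x+1$ and $\alpha$ be arbitrary. Since $\nu$ drops by exactly one under every connective, I would argue by a case distinction on the outermost logical form of $C$, using that the $\Delta^\ast_{n+1}$-grammar is invertible at the top symbol. The atomic case is settled as above. If $C = \neg B$, then reading off the grammar $B \in \Delta^\ast_{n+1}$ with $\nu(B)\le x$, so $B$ falls under the hypothesis $x \in J^\dag_{n+1}$; combining this with clauses (c),(d) of Theorem~\ref{commu} yields the equivalence for $C$. The binary cases $C = B \circ D$ (for $\circ\in\{\wedge,\vee,\to\}$) are analogous: both immediate subformulas lie in $\Delta^\ast_{n+1}$ with $\nu\le x$, so the hypothesis applies to each, and the de~Morgan-style pairs (e)/(f), (g)/(h), (i)/(j) of Theorem~\ref{commu} propagate the biconditional from the parts to the whole (rearranging the hypothesis for a subformula $E$ as ``$\alpha\models^-_{n+1} E$ iff $\alpha\not\models^+_{n+1} E$'' where needed).

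The crux is the quantifier cases, and here the point is that one does \emph{not} use the hypothesis $x \in J^\dag_{n+1}$; instead the level-shift built into clauses (l),(m) is collapsed by Theorem~\ref{babysmurf}. Suppose $C = \exists v\, B$. Since $C \in \Delta^\ast_{n+1}$, the sole production yielding a top-level existential is $\exists v\,\Sigma^\ast_n$, so $B \in \Sigma^\ast_n$ and hence $\exists v\, B \in \Sigma^\ast_n$, i.e. $\tupel{+,\alpha,\exists v\,B}\in\mathcal K_n$. (For $n=0$ this case is vacuous as $\Sigma^\ast_0=\emptyset$.) Clause (l) gives ``$\alpha\not\models^-_{n+1}\exists v\,B$ iff $\alpha\models^+_n\exists v\,B$'', while Theorem~\ref{babysmurf} (with $k=n<n+1$, using $\mathcal K_n\subseteq\mathcal K_{n+1}$) gives ``$\alpha\models^+_n\exists v\,B$ iff $\alpha\models^+_{n+1}\exists v\,B$''; chaining the two yields exactly the desired equivalence. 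The case $C = \forall v\, B$ is dual: now $B \in \Pi^\ast_n$, so $\forall v\, B \in \Pi^\ast_n$ and $\tupel{-,\alpha,\forall v\,B}\in\mathcal K_n$, and clause (m) together with Theorem~\ref{babysmurf} gives ``$\alpha\models^+_{n+1}\forall v\,B$ iff $\alpha\not\models^-_n\forall v\,B$ iff $\alpha\not\models^-_{n+1}\forall v\,B$''.

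The main obstacle, conceptually, is precisely the ``defect'' flagged before the theorem: the naive commutation conditions permit the polarity switch only one level down, at the two quantifier clauses. The resolution is to recognise that for a formula in $\Delta^\ast_{n+1}$ the quantified subformula automatically drops into $\Sigma^\ast_n$ or $\Pi^\ast_n$, so that Theorem~\ref{babysmurf} erases the distinction between level $n$ and level $n+1$ on $\mathcal K_n$; this is what makes the quantifier step go through with no appeal to the inductive hypothesis and confines the genuine use of $x \in J^\dag_{n+1}$ to the propositional connectives, where no such collapse is available. Everything else is bounded syntactic bookkeeping well within the means of ${\sf AS}^+(\Theta)$ --- which is essential, since we have no induction there and must instead package the argument as the single-step closure of the cut $J^\dag_{n+1}$.
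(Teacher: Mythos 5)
Your proof is correct and follows essentially the same route as the paper's: downward closure and the case of $0$ are dispatched via the atomic clauses, the propositional cases use the commutation conditions of Theorem~\ref{commu} together with the hypothesis $x\in J^\dag_{n+1}$ on the immediate subformulas, and the quantifier cases exploit the drop into $\Sigma^\ast_n$/$\Pi^\ast_n$ forced by $\Delta^\ast_{n+1}$-membership so that Theorem~\ref{babysmurf} collapses the level shift in clauses (l) and (m) --- including the observation, also made in the paper, that this step does not use the hypothesis on $x$.
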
 

\begin{proof}
Downwards closure is immediate.

\medent
By definition, we have $\alpha \not \models_{n+1}^- C$ iff $\alpha  \models_{n+1}^+ C$ if $C$ is an atom.
So $0$ is in $J^\dag_{n+1}$. 

\medent
Suppose $x$ is in $J^\dag_{n+1}$. We will show that $x+1$ is in  $J^\dag_{n+1}$, i.e. 
 for all $C \in \Delta^\ast_{n+1}\cap \Gamma_{x+1}$, we have $\alpha \not \models_{n+1}^- C$ iff $\alpha  \models_{n+1}^+ C$.

The case for atomic $C$ follows by previous reasoning.
Suppose, for example, that $C := (D \to E)$ is  in $\Delta^\ast_{n+1} \cap \Gamma_{x+1}$.
Then $D$ and $E$ are both in $\Delta^\ast_{n+1} \cap \Gamma_{x}$.
By the fact that $x$ is in $J^\dag_{n+1}$ we find:
\begin{eqnarray*}
 \alpha \models^+_{n+1} (D \to E)  & \iff & (\alpha \models^-_{n+1} D) \text{ or } ( \alpha\models_{n+1}^+ E) \\
 & \iff & (\alpha \not \models^+_{n+1}D)\text{ or } (\alpha\not \models_{n+1}^- E) \\
 & \iff & \neg\, ((\alpha \models^+_{n+1} D) \text{ and } (\alpha \models_{n+1}^- E)) \\
 & \iff & \neg\; \alpha \models_{n+1}^- (D \to E).
 \end{eqnarray*}
 The other unary and binary propositional connectives are similar. Now suppose $C$ is of the form $\exists v\, D$ and 
 $C \in  \Delta^\ast_{n+1}\cap \Gamma_{x+1}$.
 Since $C$ is in $\Pi^\ast_{n+1}$, we must have $D \in \Sigma^\ast_n$. It follows that $n \neq 0$. We have, by
 Theorem~\ref{babysmurf}:
 \begin{eqnarray*}
 \alpha \models^+_{n+1} \exists v\, D  & \iff &  \alpha \models^+_{n} \exists v\, D\\
 & \iff & \alpha \not\models^-_{n+1} \exists v\, D 
 \end{eqnarray*}
 Curiously, this step does not not use the fact that $x$ is in $J^\dag_{n+1}$. The case of $\forall$ is similar.
\end{proof}

\noindent
Thus, for any $C$ in $\Delta_{n+1}^\ast \cap \Gamma_{J^\dag_{n+1}}$
we have  $\alpha \not \models_{n+1}^- C$ iff $\alpha  \models_{n+1}^+ C$.
Hence, we also have the full Tarskian commutation clauses for these $C$.

We note that $J_{n+1}^\dag$ is $\Phi_1(n+1,{\sf sat}_{n+1},x)$, for a standard formula $\Phi_1(y,\mathcal X,x)$. 
Thus the $\rho$-complexity of $J_{n+1}^\dag$ is linear in $n$ where the relevant linear term is of the form
$\constantref{czero} n+ \constant{cthree}$. 

\medent
We still miss an important ingredient. 
Let $\tupel{i,\alpha,A}$ be a sat-sequence.
Suppose $\alpha$ and $\beta$ assign the same values to the free variables in
$A$. Do we have $\alpha \models^i_n A$ iff $\beta \models^i_n A$?
To prove such a thing we need induction. We would like to have even more than this,
since we want to check the validity of the inference rules for the quantifiers.
We define the property ${\sf Q}_{n+1}$ by:
\begin{itemize}
\item
The formula $A$ has the property ${\sf Q}_{n+1}$ if the following holds.
Consider any sat-sequence $\tupel{i,\alpha,A}$.
Suppose $w$ is free for $v$ in $A$. Let $B$ be (of the form) $A[v:= w]$.
Suppose further that the functions
 $\alpha[u]=\beta[u]$ for all free variables $u$ of $A$, except possibly $v$, and that $\beta[w]=\alpha[v]$. 
Then,  $\alpha \models^i_{n+1} A$ iff $\beta \models^i_{n+1} B$.
\end{itemize}
We allow that $v$ and $w$ are equal and that $v$ does not occur in $A$. Both degenerate cases tell us that ${\sf Q}_{n+1}(A)$ implies that
$\alpha \models^i_{n+1} A$ iff $\beta \models^i_{n+1} A$, whenever $\alpha$ and $\beta$
agree on the free variables of $A$.

 We can now proceed in two ways to construct a cut that gives us the desired property for the formulas
 of $\nu$-complexity in the cut. 
 One way does not involve the $\Sigma_n^\ast$ and the $\Pi_n^\ast$ and
 one way  does involve them. The second way yields a more efficient construction of the cut.
 For completeness, we explore both ways.
 
 We first address the first way.
 We define:
 \begin{itemize}
 \item
 $J^\circ_0 := {\sf N}$, $x \in J^\circ_{n+1} : = \verz{x \in J^\circ_n \mid  \Gamma_x \subseteq {\sf Q}_{n+1}}$.
 \end{itemize}
 
 \noindent
 We note that the definition of $J^\circ_{n+1}$ is of the form $\Phi_2({\sf sat}_{n+1},J^\circ_n,x)$ for a fixed
 $\Phi_2(\mathcal X, \mathcal Y,x)$. So,
 $\rho(J^\circ_{n+1}) = {\sf max}({\rho(\sf sat}_{n+1}),\rho(J^\circ_n))+\constant{cfour}$, for a fixed standard $\constantref{cfour}$.
 It follows that $\rho(J^\circ_{n})$ is estimated some linear term $\constant{cfive} n + \constant{csix}$.

  \begin{theorem}[${\sf AS}^+(\Theta)$]\label{tuinsmurf}
  The virtual class $J^\circ_n$ is closed under $0$, successor, and is downwardly closed w.r.t. $\leq$.
\end{theorem}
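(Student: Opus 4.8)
The plan is to argue by external induction on $n$, exactly as for the earlier cut-like classes. The base case $J^\circ_0 = {\sf N}$ is immediate, since ${\sf N}$ contains $0$, is closed under successor, and is downwards closed. So I assume the three properties for $J^\circ_n$ and establish them for $J^\circ_{n+1} = \verz{x \in J^\circ_n \mid \Gamma_x \subseteq {\sf Q}_{n+1}}$.

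Downwards closure and the presence of $0$ are the easy parts. If $x \in J^\circ_{n+1}$ and $y \leq x$, then $y \in J^\circ_n$ by the induction hypothesis, and $\Gamma_y \subseteq \Gamma_x \subseteq {\sf Q}_{n+1}$ since $\nu(A) \leq y$ forces $\nu(A)\leq x$; hence $y \in J^\circ_{n+1}$. For $0$, we have $0 \in J^\circ_n$ by the induction hypothesis, and $\Gamma_0$ is just the atomic formulas, each of which lies in ${\sf Q}_{n+1}$ directly from the atomic commutation clauses (a) and (b) of Theorem~\ref{commu}: replacing $v$ by $w$ in an atom and passing from $\alpha$ to a $\beta$ with $\beta[w]=\alpha[v]$ that otherwise agrees with $\alpha$ leaves the witnessing atomic fact unchanged.

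The real work is closure under successor. Fix $x \in J^\circ_{n+1}$; I must show $x+1 \in J^\circ_{n+1}$. First, $x \in J^\circ_n$, so $x+1 \in J^\circ_n$ by the induction hypothesis. This records two facts I will exploit: $\Gamma_x \subseteq {\sf Q}_{n+1}$ (from $x \in J^\circ_{n+1}$) and, for $n \geq 1$, $\Gamma_{x+1} \subseteq {\sf Q}_n$ by the very definition of $J^\circ_n$ (the case $n=0$ is handled separately below, since ${\sf sat}_0$ is constant). It remains to prove ${\sf Q}_{n+1}(C)$ for every $C$ with $\nu(C) \leq x+1$, by cases on the principal connective of $C$, reading $\alpha \models^i_{n+1} C$ off the commutation clauses of Theorem~\ref{commu} and matching it against $\beta \models^i_{n+1} B$ for $B = C[v:=w]$. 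The atomic case is subsumed under $\Gamma_0 \subseteq \Gamma_x$. For a propositional principal connective ($\neg,\wedge,\vee,\to$) the immediate subformulas have $\nu$-complexity $\leq x$, hence lie in $\Gamma_x \subseteq {\sf Q}_{n+1}$; clauses (c)--(j) express $\models^i_{n+1}C$ Booleanly in terms of those subformulas, to which ${\sf Q}_{n+1}$ applies, and $B$ decomposes the matching way because substitution commutes with the connectives.

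The quantifier cases are where care is needed, splitting according to whether the relevant clause stays at level $n+1$ or drops to level $n$. Take $C = \exists v' D$ (the case $C = \forall v' D$ being dual). For $i=+$ I use the genuine clause (k): $\alpha \models^+_{n+1}\exists v' D$ iff $\gamma \models^+_{n+1} D$ for some $\gamma$ with $\alpha \braee{v'}\gamma$. Since $\nu(D) \leq x$, we have $D \in \Gamma_x \subseteq {\sf Q}_{n+1}$, and because $w$ is free for $v$ in $C$ I may assume $v'\notin\verz{v,w}$ (the degenerate cases being routine); given a witness $\gamma$ for $\alpha$ I take $\delta$ with $\beta\braee{v'}\delta$ and $\delta[v']=\gamma[v']$, verify that $\gamma,\delta$ agree on the free variables of $D$ except $v$ and that $\delta[w]=\gamma[v]$ (using $\beta[w]=\alpha[v]$ and $\alpha\braee{v'}\gamma$), and invoke ${\sf Q}_{n+1}(D)$ to carry the witness for $\alpha$ over to $\beta$, and conversely. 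For $i=-$ I use the self-referential clause (l): $\alpha \models^-_{n+1}\exists v' D$ iff $\alpha \not\models^+_n \exists v' D$. Here I do \emph{not} have ${\sf Q}_{n+1}$ of the whole formula, but I do have ${\sf Q}_n(\exists v' D)$, precisely because $\exists v' D \in \Gamma_{x+1} \subseteq {\sf Q}_n$; this gives $\alpha \models^+_n \exists v' D$ iff $\beta \models^+_n \exists v'(D[v:=w])$, and negating yields the level-$(n+1)$ equivalence. When $n=0$ this last step is vacuous, as ${\sf sat}_0$ is constant so both negated sides are $\top$. The universal case is symmetric, the genuine direction being $i=-$ via clause (n) and the self-referential direction $i=+$ via clause (m). The main obstacle throughout is bookkeeping: checking in the genuine quantifier clauses that the modified assignment $\delta$ meets the hypotheses of ${\sf Q}_{n+1}(D)$ without variable capture, which is exactly what the hypothesis that $w$ is free for $v$ secures.
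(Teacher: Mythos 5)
Your proposal is correct and follows essentially the same route as the paper: external induction on $n$, with the propositional and ``genuine'' quantifier clauses handled via the level-$(n{+}1)$ commutation conditions applied to subformulas in $\Gamma_x \subseteq {\sf Q}_{n+1}$, and the self-referential clauses (l)/(m) handled by observing that $x\in J^\circ_n$ together with the induction hypothesis gives $x+1\in J^\circ_n$, hence ${\sf Q}_n$ for the whole formula at level $n$. Your explicit treatment of the degenerate $n=0$ subcase and of the variable bookkeeping is a slight elaboration of what the paper leaves implicit, but the argument is the same.
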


\begin{proof}
Closure under 0 and downwards closure are  trivial.
We prove closure under successor by induction on $n$. The case of $J^\circ_0$ is trivial. 
Suppose  that $J^\circ_n$ is closed under successor.  Consider $x$ in $J^\circ_{n+1}$.
Let  $C$ and $D$ be in $\Gamma_x $.

\medent
Let
 $A$ be of the form $(C\wedge D)$.
Suppose $w$ is free for $v$ in $A$. Let $B$ be (of the form) $A[v:= w]$.
Suppose further that
 $\alpha$ and $\beta$ assign the same values to the free variables of
 $A$ except $v$ and $\beta[w]=\alpha[v]$. Clearly $B$ is of the form $E \wedge F$, where
 $E$ is of the form $C[v:=w]$ and $F$ is of the form $D[v:=w]$.
 We have:
 \begin{eqnarray*}
\alpha \models^{+}_{n+1} C \wedge D & \iff & \alpha \models^{+}_{n+1} C \text{ and } \alpha \models^{+}_{n+1} D   \\
& \iff &  \beta \models^{+}_{n+1} E \text{ and } \beta  \models^{+}_{n+1} F \\
& \iff &  \beta \models^{+}_{n+1} E \wedge F
\end{eqnarray*}
Similarly for the $\models^-$-case. The other propositional cases are similar.

\medent 
We treat the case of the existential quantifier, the case of the universal quantifier being similar.
Let $A$ be of the form  $\exists z\, C$.
Suppose $w$ is free for $v$ in $A$. Let $B$ be (of the form) $A[v:= w]$.
Suppose further that
 $\alpha$ and $\beta$ assign the same values to the free variables of
 $A$ except $v$ and $\beta[w]=\alpha[v]$. 
 
 We first address the $\models^+$-case.
 The argument splits into two subcases. First we have the
  case that  $z$ is (of the form) $v$, we find that
 $B$ is of the form $A$. Hence, replacing $z$ by $v$, we have:
  \begin{eqnarray*}
\alpha \models^{+}_{n+1} \exists v\, C & \iff & \exists \gamma\, ( \alpha\braee{v}\gamma \text{ and } \gamma \models^{+}_{n+1} C )   \\
& \iff &  \exists \delta\, ( \beta \braee{v}\delta \text{ and } \delta \models^{+}_{n+1} C )   \\
& \iff & \beta \models^{+}_{n+1} \exists v\, C 
\end{eqnarray*}
 
 \noindent
For example, in the left-to-right direction of the second step we can take $\delta$ of the form $\beta[v: \gamma(v)]$. 
We can use the fact that $C$ has $\nu$-complexity $x$ and $x  \in J^\circ_{n+1}$. The property ${\sf Q}_{n+1}$ is applied
 with $v$ in the role of both $v$ and $w$.

Next we have the case that $z$ and $w$ are different variables. Let $D$ be of the form $C[v:w]$.
So $B$ is of the form $\exists z\, D$. We have:
 \begin{eqnarray*}
\alpha \models^{+}_{n+1} \exists z\, C & \iff & \exists \gamma\, ( \alpha\braee{z}\gamma \text{ and } \gamma \models^{+}_{n+1} C )   \\
& \iff &  \exists \delta\, ( \beta \braee{z}\delta \text{ and } \delta \models^{+}_{n+1} D )   \\
& \iff & \beta \models^{+}_{n+1} \exists z\, D 
\end{eqnarray*}
E.g., in the left-to-right direction of the second step we can again take $\delta$ of the form $\beta[z: \gamma(z)]$. 

Finally we address the $\models^-$-case. We have:
  \begin{eqnarray*}
\alpha \models^{-}_{n+1} \exists v\, C & \iff & \neg\;\; \alpha \models^+_{n}  \exists v\, C  \\
& \iff & \neg\;\; \beta \models^+_{n}  \exists v\, D  \\
& \iff &  \beta \models^-_{n+1}  \exists v\, D   
\end{eqnarray*}
 
\noindent Here we use the fact that $x \in J^\circ_n$, so that also $x+1 \in J_n^\circ$. 
\end{proof}

\noindent
We turn to the second approach. 
We define:
\begin{itemize}
\item
$J^\star_{n+1} := \verz{x\in {\sf N} \mid ( \Gamma_x \cap \Delta^\ast_{n+1}) \subseteq {\sf Q}_{n+1}}$.
\end{itemize}
We have:

\begin{theorem}[${\sf AS}^+(\Theta)$]
The virtual class $J^\star_{n+1}$ is closed under 0, successor and is downwards closed w.r.t. $\leq$.
\end{theorem}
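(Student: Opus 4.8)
Prove that $J^\star_{n+1} := \{x\in{\sf N} \mid (\Gamma_x \cap \Delta^\ast_{n+1}) \subseteq {\sf Q}_{n+1}\}$ is closed under $0$, successor, and downward under $\leq$.

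**Structure of the argument.** This is the "second approach" analogue of Theorem~\ref{tuinsmurf}, the difference being that we restrict attention to the $\Delta^\ast_{n+1}$ formulas within each $\nu$-complexity band rather than all formulas. So the plan is to mirror the proof of Theorem~\ref{tuinsmurf} exactly, checking that ${\sf Q}_{n+1}$ is inherited when we build up a $\Delta^\ast_{n+1}$ formula from smaller ones. The key structural point is that $\Delta^\ast_{n+1}$ is closed under subformulas in the propositional cases: if $(C\wedge D)$, $(C\vee D)$, $(C\to D)$, or $\neg\,C$ lies in $\Delta^\ast_{n+1}$, then so do its immediate subformulas (this follows directly from the grammar for $\Delta^\ast_{n+1}$, where each connective clause takes $\Delta^\ast_{n+1}$ arguments). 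Hence if such a formula has $\nu$-complexity $\leq x+1$, its subformulas land in $\Gamma_x \cap \Delta^\ast_{n+1}$, which is exactly what we need to invoke the inductive step.

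**The steps.** Downward closure under $\leq$ and membership of $0$ are immediate, the latter because atoms satisfy ${\sf Q}_{n+1}$ trivially (substitution in an atom is handled directly, exactly as in Theorem~\ref{tuinsmurf}). For the successor step, I would assume $x \in J^\star_{n+1}$ and take any $A \in \Gamma_{x+1}\cap\Delta^\ast_{n+1}$. The propositional cases run verbatim as in Theorem~\ref{tuinsmurf}: by subformula-closure of $\Delta^\ast_{n+1}$, the immediate subformulas lie in $\Gamma_x \cap \Delta^\ast_{n+1} \subseteq {\sf Q}_{n+1}$, and the commutation conditions of Theorem~\ref{commu} push the substitution through the connective. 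For the quantifier case, say $A = \exists z\,C$: because $A \in \Delta^\ast_{n+1}$, the grammar forces $C \in \Sigma^\ast_n$, so in particular $n \neq 0$ and $C \in \Delta^\ast_{n+1}$ with $\nu(C) \leq x$, placing $C \in \Gamma_x \cap \Delta^\ast_{n+1}$. The $\models^+$-case splits into the subcase $z$ is $v$ and the subcase $z \neq v$, each handled by applying ${\sf Q}_{n+1}(C)$ to relocate the witness $\gamma$; the $\models^-$-case uses clause (l) of Theorem~\ref{commu} together with Theorem~\ref{babysmurf} to drop from level $n+1$ to level $n$, where $\exists z\,C \in \Sigma^\ast_n$ sits in its intended range. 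The universal case is dual.

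**Main obstacle.** The one genuine subtlety, and the reason this variant is worth stating separately, is the quantifier case: I must confirm that the membership $A \in \Delta^\ast_{n+1}$ really does certify $C \in \Delta^\ast_{n+1}$ with the quantifier-level bookkeeping consistent, so that the appeal to Theorem~\ref{babysmurf} is legitimate and the $\models^-$-chain closes. Here the payoff over Theorem~\ref{tuinsmurf} is that we never need $J^\circ_n$ as an auxiliary parameter: because we have already restricted to $\Delta^\ast_{n+1}$, the existential/universal formula itself lies in the correct lower class ($\Sigma^\ast_n$ or $\Pi^\ast_n$), so Theorem~\ref{babysmurf} applies unconditionally and the cut is defined by a single clause rather than a nested recursion. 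This is precisely the "more efficient construction" the text advertises, and verifying it is the crux; everything else is a transcription of the earlier proof with $\Gamma_x$ replaced by $\Gamma_x \cap \Delta^\ast_{n+1}$.
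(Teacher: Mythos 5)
Your proposal is correct and follows the paper's own proof essentially verbatim: closure under $0$ and downward closure are dismissed as trivial, the propositional and $\models^+$-quantifier cases are transcribed from Theorem~\ref{tuinsmurf}, and the $\models^-$-quantifier case is handled by noting that $\exists v\, C \in \Delta^\ast_{n+1}$ forces $\exists v\, C \in \Sigma^\ast_n$, so that Theorem~\ref{babysmurf} lets one pass between levels $n$ and $n+1$, apply the already-established $\models^+$-case at level $n+1$, and descend again. Your closing remark that this detour through level $n+1$ is precisely what renders the auxiliary cut $J^\circ_n$ unnecessary is exactly the efficiency point the paper is making with this second construction.
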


\begin{proof}
The cases of closure under 0 and downwards closure are trivial. 
Suppose $x$ is in $J^\star_{n+1}$. 
The cases of the propositional connectives use
the same argument as we saw in the proof of theorem~\ref{tuinsmurf}. We turn to the case of the existential quantifier, the
case of the universal quantifier being dual. Suppose $C$ is in $\Gamma_x \cap \Delta^\ast_{n+1}$. The case of $\models^+$ is
again the same as we saw in the proof of Theorem~\ref{tuinsmurf}. We consider the case of $\models^-$.
Suppose $\exists v\, C$ is in $\Delta^\ast_{n+1}$. In this case $\exists v\, C$ must be in $\Sigma^\ast_n$.
We have:
  \begin{eqnarray*}
\alpha \models^{-}_{n+1} \exists v\, C & \iff & \neg\, \alpha \models^+_{n}  \exists v\, C  \\
& \iff & \neg\, \alpha \models^+_{n+1}  \exists v\, C  \\
& \iff & \neg\, \beta \models^+_{n+1}  \exists v\, D  \\
& \iff & \neg\, \beta \models^+_{n}  \exists v\, D  \\
& \iff &  \beta \models^-_{n+1}  \exists v\, D   
\end{eqnarray*}
 The first and the fifth step use the commutation conditions for $\exists$. The second and the fourth step use 
 Theorem~\ref{babysmurf}. The third step uses the previous case for $\models^+$.
\end{proof}

\noindent
We note that the definition of $J_{n+1}^\star$ is of the form $\Phi_3(n,{\sf Sat})$. So, its $\rho_0$-complexity
 is estimated by a linear term of the form $\constantref{czero} n + \constant{cseven}$.
Here the use of $J_{n+1}^\star$ has an advantage over $J_n^\circ$, since construction of
the $J_m^\circ$ gives us a linear complexity but conceivably with a higher constant as coefficient of $n$. 

\medent
Let us take stock of what we accomplished. 
We have defined virtual classes $J_{n+1}^\star$ that are closed under $0$ and {\sf S} and that are downwards closed
such that for all formulas $A$ in $\Gamma_{J_{n+1}^\star} \cap \Delta^\ast_{n+1}$, we have, for all $\alpha$, that
$\alpha \models_{n+1}^+ A$ iff $\alpha \not \models_{n+1}^- A$. Here
$\Gamma_{J_{n+1}^\star}   := \bigcup_{x\in J_{n+1}^\star} \Gamma_x$.

Also, we have developed
virtual classes $J_{n+1}^\circ$ and $J_{n+1}^\star$ such that all $A$ in $\Gamma_{J_{n+1}^\circ}$, and, similarly, all  
$A$ in $\Gamma_{J_{n+1}^\star} \cap \Delta_{n+1}$
have the property
${\sf Q}_{n+1}$ defined above. 

So, if we take $J_{n+1}^\ddag$ either $J_{n+1}^\star \cap J_{n+1}^\circ$ or $J_{n+1}^\star \cap J_{n+1}^\star$
then $J_{n+1}^\ddag$ is progressive and all elements of $\Xi_{n+1} := \Gamma_{J_{n+1}^\ddag} \cap \Delta_{n+1}$ have both good properties.
Let us choose for $J^\star$ in the definition of $\Xi_{n+1}$, so that its $\rho$-complexity is estimated by
$\constantref{czero}n + \constant{ceight}$.

We summarize the result in a theorem.

\begin{theorem}\label{boeksmurf}
We have full commutation of ${\sf sat}(+,\cdot,\cdot )$ for the $\Xi_{n+1}$-formulas. Moreover, the
$\Xi_{n+1}$-formulas have propertu ${\sf Q}_{n+1}$.
\end{theorem}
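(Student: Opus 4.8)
The plan is to read Theorem~\ref{boeksmurf} as the bookkeeping assembly of the three cut-constructions just developed, with essentially no new mathematical content beyond checking that an intersection of cuts is again a cut and that $\Xi_{n+1}$ is closed under the subformula passages required to invoke the earlier results. Concretely, I would set $J^\ddag_{n+1} := J^\dag_{n+1} \cap J^\star_{n+1}$, taking the efficient $J^\star_{n+1}$ rather than $J^\circ_{n+1}$ as announced. First I would observe that this intersection is again progressive and downwards closed: $0$ lies in both factors; if $x$ lies in both then $x+1$ does, by closure of each factor under successor; and downward closure is inherited factorwise. Hence $J^\ddag_{n+1}$ is a genuine cut and $\Gamma_{J^\ddag_{n+1}}$ a legitimate class of formulas.

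Next I would record the subformula-closure of $\Xi_{n+1} := \Gamma_{J^\ddag_{n+1}} \cap \Delta^\ast_{n+1}$, which is what keeps the clause-by-clause arguments inside $\Xi_{n+1}$. Passing to an immediate subformula strictly decreases the $\nu$-complexity, so a subformula of a formula of $\nu$-complexity $x$ lies in $\Gamma_{x-1}$ with $x-1 \in J^\ddag_{n+1}$ by downward closure; and $\Delta^\ast_{n+1}$ is itself closed under immediate subformulas, the only nontrivial cases being $\exists v\, \Sigma^\ast_n$ and $\forall v\, \Pi^\ast_n$, handled by Fact~\ref{smallfact} ($\Sigma^\ast_n \cup \Pi^\ast_n \subseteq \Delta^\ast_{n+1}$). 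Thus whenever $A \in \Xi_{n+1}$, all immediate subformulas of $A$ lie in $\Xi_{n+1}$ as well.

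Property ${\sf Q}_{n+1}$ is then immediate: since $\Xi_{n+1} \subseteq \Gamma_{J^\star_{n+1}} \cap \Delta^\ast_{n+1}$, the theorem establishing that $J^\star_{n+1}$ is progressive already yields ${\sf Q}_{n+1}(A)$ for every $A \in \Xi_{n+1}$, with nothing to add. For full commutation of ${\sf sat}(+,\cdot,\cdot)$ I would combine the commutation conditions of Theorem~\ref{commu} with the equivalence $\alpha \models^+_{n+1} A \iff \alpha \not\models^-_{n+1} A$, which holds on $\Xi_{n+1}$ because $\Xi_{n+1} \subseteq \Gamma_{J^\dag_{n+1}} \cap \Delta^\ast_{n+1}$ and $J^\dag_{n+1}$ is the commutation cut. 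The conjunction, disjunction and existential $\models^+$-clauses (e), (g) and (k) already appear in Tarskian form in Theorem~\ref{commu}; for the negation clause (c) and the implication clause (i) the $J^\dag$-equivalence on the relevant subformula turns the residual $\models^-$ into $\not\models^+$, giving the ordinary Tarski clauses. The genuinely defective case is (m): starting from $\beta \models^+_{n+1} \forall v\, B \iff \beta \not\models^-_n \forall v\, B$, I would note that $\forall v\, B \in \Pi^\ast_n$ (since $\forall v\, B \in \Delta^\ast_{n+1}$ forces $B \in \Pi^\ast_n$ and $\Pi^\ast_n$ is closed under $\forall$), so Theorem~\ref{babysmurf} upgrades the right-hand side to $\beta \not\models^-_{n+1} \forall v\, B$; clause (n) rewrites this as ``for all $\gamma$ with $\beta\braee{v}\gamma$, $\gamma \not\models^-_{n+1} B$''; and finally the $J^\dag$-equivalence applied to the subformula $B \in \Xi_{n+1}$ converts $\gamma \not\models^-_{n+1} B$ into $\gamma \models^+_{n+1} B$, yielding the true Tarskian $\forall$-clause.

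The step requiring care, rather than a genuine obstacle, is precisely this repair of the level-dropping clauses (l) and (m): one must verify the class memberships ($\forall v\, B \in \Pi^\ast_n$ and $B \in \Xi_{n+1}$) that license Theorem~\ref{babysmurf} and the $J^\dag$-equivalence on subformulas. All of this is supplied by the subformula-closure observation, so once that is in place the theorem follows by assembling Theorem~\ref{commu}, Theorem~\ref{babysmurf}, and the progressivity results for $J^\dag_{n+1}$ and $J^\star_{n+1}$.
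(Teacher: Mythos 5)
Your proposal is correct and matches the paper's intent exactly: the theorem is stated there without a separate proof, as a summary of the preceding constructions, and your assembly — taking $J^\ddag_{n+1} = J^\dag_{n+1}\cap J^\star_{n+1}$ (correctly decoding the paper's typo ``$J^\star\cap J^\star$''), checking progressivity of the intersection, and repairing clauses (l) and (m) of Theorem~\ref{commu} via Theorem~\ref{babysmurf} together with the $J^\dag$-equivalence on subformulas — is precisely the argument the surrounding text sketches. Your explicit subformula-closure observation for $\Xi_{n+1}$ is a detail the paper leaves implicit, and it is verified correctly.
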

  
\noindent
  We write $\mathscr A \rightslice_n A$ for the formalization of $\mathscr A \vdash_n A$, where $\mathscr A$ codes a finite set of formulas and
   $\vdash_n$ is 
  provability in predicate logic where we restrict ourselves in the proof to $\Xi_{n+1}$-formulas.
  We choose to code the set of formulas in the natural numbers. This is a bit unnatural since ${\sf AS}(\Theta)$
  contains sets as first-class citizens. However, if we code sets of formulas in the sets provided by ${\sf AS}(\Theta)$
  directly we do not know, for example, that ${\sf ass}(p)$ the set of assumptions of a proof $p$ is a set.
  Of course, this problem can be evaded by shortening {\sf N} in such a way that any set coded in the natural numbers
  maps to first-class set. If the reader prefers this other road, we think it is sufficiently clear how to adapt the
  results below to this alternative approach.
  
   We write $p:\mathscr A \rightslice_n A$ for: $p$ is the code of a $n$-proof witnessing $\mathscr A  \rightslice_n A$.
  We write $\Lambda_{n,y}$ for the class of $n$-proofs $p$  where the number of steps of $p$ is $\leq y$.
On the semantical side,  we define, for $\mathscr A \cup \verz{A} \subseteq  \Xi_{n+1}$:
  \[ \mathscr A \models_{n+1} A : \iff 
  \forall \alpha\,
(\forall A'\in \mathscr A\, {\sf sat}_{n+1}(+,\alpha,A') \to {\sf sat}_{n+1}(+,\alpha,A)). \]
  
  \noindent
  We work in ${\sf AS}^+(\Theta)$. We write ${\sf ass}(p)$ for the assumption set of (proof code) $p$.
  Let $Y_n$ be the class of $y$ such that,
  for all $p\in \Lambda_{n,y}$, if $p:{\sf ass}(p) \rightslice_n A$, then
  ${\sf ass}(p)\models_n A$.
  
  \begin{theorem}
  The virtual class $Y_n$ is downwards closed under $\leq$, contains 0, and is closed under successor.
  \end{theorem}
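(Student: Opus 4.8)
The plan is to read this statement as an arithmetized soundness theorem for the restricted calculus $\rightslice_n$, measured against the semantic consequence $\models_n$ (which unfolds into an implication between ${\sf sat}$-satisfactions), and to establish progressivity of $Y_n$ by an internal induction on the step-bound $y$. Downwards closure is immediate: since $\Lambda_{n,y'} \subseteq \Lambda_{n,y}$ whenever $y' \leq y$, any $y$ enjoying the soundness property passes it to all smaller $y'$. The base case $0 \in Y_n$ is trivial as well, either vacuously (no derivation has zero steps, so $\Lambda_{n,0}$ contains nothing to check) or because the only zero-step derivations are leaves $p : {\sf ass}(p) \rightslice_n A$ with $A \in {\sf ass}(p)$, for which ${\sf ass}(p) \models_n A$ holds directly from the definition of $\models_n$.

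All the substance is in closure under successor. First I would fix $y \in Y_n$, take an arbitrary $p \in \Lambda_{n,y+1}$ with $p : {\sf ass}(p) \rightslice_n A$, and inspect its last inference. If $A$ is a logical axiom or an undischarged assumption, soundness is immediate; otherwise the final step applies one of the finitely many rules of predicate logic, and its immediate subderivations use at most $y$ steps, hence lie in $\Lambda_{n,y}$, with assumption sets contained in ${\sf ass}(p)$ modulo the standard bookkeeping for discharged assumptions. Since $y \in Y_n$, the induction hypothesis yields soundness of each subderivation, so the conclusion of each is a $\models_n$-consequence of its assumptions. It then remains only to check, rule by rule, that the last inference preserves $\models_n$-validity. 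The crucial structural fact is that, because $p$ is an $n$-proof restricted to $\Xi_{n+1}$-formulas, every formula occurring in it --- premises, conclusion, and the substitution instances invoked by the quantifier rules --- lies in $\Xi_{n+1}$, so the full Tarskian commutation clauses and the substitution-invariance property ${\sf Q}_{n+1}$ of Theorem~\ref{boeksmurf} are available for each of them.

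For the propositional rules the verification is routine from Theorem~\ref{commu}: e.g.\ for a modus-ponens step from $B$ and $B \to A$ to $A$, an assignment $\alpha$ satisfying ${\sf ass}(p)$ satisfies both premises by the induction hypothesis, and then the implication clause of Theorem~\ref{commu} together with the full commutation $\alpha \models^+_{n+1} B \Rightarrow \alpha \not\models^-_{n+1} B$ (Theorem~\ref{boeksmurf}, legitimate since $B \in \Xi_{n+1}$) forces $\alpha \models^+_{n+1} A$. The quantifier rules are the main obstacle, and they are precisely what ${\sf Q}_{n+1}$ was designed for. For a generalization step from $A$ to $\forall v\, A$ under the eigenvariable condition that $v$ is not free in ${\sf ass}(p)$, I would use ${\sf Q}_{n+1}$ twice: once to see that every $\gamma$ with $\alpha \braee{v} \gamma$ still satisfies the assumptions (they contain no free $v$), and once, via the full commutation clause for $\forall$, to repackage ``$\gamma \models^+_{n+1} A$ for all such $\gamma$'' as $\alpha \models^+_{n+1} \forall v\, A$. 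Dually, the instantiation and $\exists$-introduction principles are validated by the substitution clause of ${\sf Q}_{n+1}$, which relates $\alpha \models^i_{n+1} A$ to $\beta \models^i_{n+1} A[v := w]$ for the appropriate reset assignment $\beta$.

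The delicate point --- more bookkeeping than depth --- is keeping the index arithmetic straight. The quantifier clauses of ${\sf sat}_{n+1}$ are phrased through ${\sf sat}_n$, so validating the quantifier rules inside a single fixed index silently passes through Theorem~\ref{babysmurf}, which the full commutation of Theorem~\ref{boeksmurf} already absorbs; and one must verify explicitly that each substitution instance $A[v := w]$ remains in $\Xi_{n+1}$. This last verification is safe because replacing a variable by a variable alters neither the $\nu$-depth nor the $\Sigma^\ast/\Pi^\ast$ classification, so $A[v:=w]$ stays in $\Delta^\ast_{n+1}$ and inside the cut $\Gamma_{J^\ddag_{n+1}}$ exactly when $A$ does; this must be noted for Theorem~\ref{boeksmurf} to apply to it. Once these memberships are recorded, the rule-by-rule check shows ${\sf ass}(p) \models_n A$, hence $y+1 \in Y_n$, completing the induction and establishing that $Y_n$ contains $0$, is closed under successor, and is downwards closed under $\leq$.
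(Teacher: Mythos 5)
Your proposal is correct and follows essentially the same route as the paper: downwards closure and the base case are dispatched trivially, and progressivity is established by a rule-by-rule soundness check in which the propositional rules are handled by the commutation conditions of Theorems~\ref{commu} and~\ref{boeksmurf}, while the quantifier rules are validated via the substitution property ${\sf Q}_{n+1}$ (used both in the degenerate case for the eigenvariable condition on the assumptions and in the substitution case for generalization/instantiation). Your explicit remark that variable-for-variable substitution preserves membership in $\Xi_{n+1}$ is a detail the paper leaves implicit, but it does not change the argument.
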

  
  \begin{proof}
  Downwards closure under $\leq$ is trivial. We show that $Y_n$ is progressive.  
We follow the system for Natural Deduction in sequent style as given in \cite[Subsection 2.1.4]{troe:basi00}. 
By Theorem~\ref{boeksmurf}, the propositional cases are immediate.
We will treat the introduction and the elimination rule of the universal quantifier. This follows mainly the usual text book
proof. For the convenience of the reader, we repeat the property ${\sf Q}_{n+1}$:
\begin{itemize}
\item
The formula $C$ has the property ${\sf Q}_{n+1}$ if the following holds.
Consider any sat-sequence $\tupel{i,\alpha,C}$.
Suppose $u$ is free for $z$ in $C$. 
Suppose further that
 $\alpha[a]=\beta[a]$ for all free variables $a$ of
 $A$ except $z$ and that $\beta[u]=\alpha[z]$. 
Then,  $\alpha \models^i_{n+1} C$ iff $\beta \models^i_{n+1} C[z:=u]$.
\end{itemize}

\noindent
We treat the case of universal generalization. 
Let $w$ be substitutable for $v$ in $A$ and suppose
$w$ does not occur freely in the elements of $\mathscr A\cup\verz{A}$.
Suppose we have (\dag) $\mathscr A \models_{n+1} A[v:=w]$. We show that $\mathscr A \models_{n+1} \forall v\, A$.

Consider any $\alpha$ and suppose $\alpha \models_{n+1}^+ A'$, for all $A'$ in $\mathscr A$. We want to show that
 $\alpha \models_{n+1}^+ \forall v\, A$.  Let $d$ be any element. It is clearly sufficient to show that $\alpha[v:d] \models_{n+1}^+ A$.

We first note that $\alpha[w:d]$ and $\alpha$ are the same on the free variables of $A'$  in $\mathscr A$.  
So, by ${\sf Q}_{n+1}$ in a degenerate case, we find that
$\alpha[w:d] \models^+_{n+1}A'$, for all $A'$ in $\mathscr A$. By (\dag), we find $\alpha[w:d] \models^+_{n+1}A[v:= w]$.
We note that  $\alpha[w:d]$ and $\alpha[v:d]$ assign the same values to all free variables $u$ of $A$, except possibly $v$.
This uses that $w$ does not occur in $A$. Moreover, $\alpha[w:d][w] = \alpha[v:d][v]$. So, by ${\sf Q}_{n+1}(A)$, 
we find $\alpha[v:d] \models_{n+1}^+ A$ iff $\alpha[w:d] \models A[v:= w]$.  Thus, we may conclude $\alpha[v:d] \models_{n+1}^+ A$
as desired.

\medent
We treat the case of universal instantiation.
Suppose (\ddag) $\mathscr A \models \forall v\, A$. We want to conclude $\mathscr A \models A[v:=w]$.
Suppose, for all $A'\in \mathscr A$, we have $\alpha \models^+_{n+1} A'$.
It follows that (\$) $\alpha \models^+_{n+1} \forall v\, A$. We want to conclude that $\alpha \models A[v:= w]$.
From (\$), we have $ \alpha[v:= \alpha[w]] \models^+ A$. We note that $\alpha$ and $ \alpha[v:= \alpha[w]]$
assign the same values to all free variables of $A$ except possibly $v$. Moreover $ \alpha[v:= \alpha[w]][v] = \alpha[w]$. 
By ${\sf Q}_{n+1}(A)$, we may conclude that $\alpha \models^+_{n+1} A[v:= w]$.
 \end{proof} %%%%%%%%%%%%%
 
 \noindent
  Inspecting the construction of $Y_n$ we see that it is of the form $\Phi_4(\underline n, {\sf sat}_n,J^\ddag_{n+1})$, where
  $\Phi_4(x,\mathcal X,\mathcal Y)$ is a fixed formula. Thus $\rho(Y_n)$ is estimated by $\constantref{czero}n + \constant{cnine}$.
  
  \medent
  We now have a refined result involving separate restrictions on $\rho$ on $\nu$ and on the length of the proofs. 
  For other applications this refinement may be useful, however, in the present paper, we will simply demand
  that our proofs are in a cut $\Im_n(\Theta)$ that is obtained by taking the intersection of $J^\ddag_{n+1}$ and
  $Y_n$ and shortening to obtain downwards closure and closure under $0$, {\sf S}, $+$, $\times$ and $\omega_1$.
  Since the shortening procedure only adds a standardly finite depth to the input formula $J^\ddag_{n+1}\cap Y_n$,
  $\rho(\Im_n(\Theta))$ will have complexity  $\constantref{czero}n + \constant{cten}$. Moreover, when $p$ is in $\Im_n(\Theta)$, then ipso facto
  its length is in $Y_n$ and its $\nu$-complexity is in $J^\ddag_{n+1}$. 
  
  We write $ [ \mathscr A \vdash_n A]$ for provability in predicate logic involving only $\Delta_n^\ast$-formulas where the proof is constrained to be
  in the cut $J$. We write $ [ \mathscr A \vdash^J_n A]$ when the witness for $ [ \mathscr A \vdash_n A]$ is constrained to the cut $J$.
  We write   $\opr_{\Theta,n}A$ for $[ \emptyset \vdash_n A]$, and $\opr^J_{\Theta,n}A$ for $[ \emptyset \vdash^J_n A]$.
  For sentences $A$, we will write ${\sf true}_{\Theta,n} (A)$ for $\forall \alpha\, {\sf sat}_n(+,\alpha,A)$.
  
  \begin{theorem}\label{summumsmurf}
  We can find an $\omega_1$-cut $\Im_n(\Theta)$ such that $\rho(\Im_n(\Theta))$ is of order $\constantref{czero}n + \constantref{cten}$ and
  such that:
  \[ {\sf AS}^+(\Theta) \vdash \forall  \mathscr A , A  \, ([\mathscr A \vdash^{\Im_n(\Theta)}_n A] \to  \mathscr A \models_n A).\]
  As a special case, we have:
 \[{\sf AS}^+(\Theta) \vdash \forall A \in {\sf sent}^{\Im_n(\Theta)} \, (\opr_{\Theta,n}^{\Im_n(\Theta)}A \to {\sf true}_{\Theta,n}(A)).\]
  \end{theorem}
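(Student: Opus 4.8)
The plan is to read off both the cut and the soundness statement from the machinery already assembled, since the genuinely inductive labour has been discharged in constructing the cuts $J^\ddag_{n+1}$ and $Y_n$ and in proving full commutation (Theorem~\ref{boeksmurf}). Concretely, I would let $\Im_n(\Theta)$ be a shortening of $J^\ddag_{n+1}\cap Y_n$ that is coerced into being downwards closed and closed under $0$, ${\sf S}$, $+$, $\times$ and $\omega_1$. The progressiveness results for $J^\ddag_{n+1}$ and the preceding theorem on $Y_n$ guarantee that both factors are downwards closed progressive classes, so their intersection is one too and may be shortened. Closure under $\omega_1$ turns it into an $\omega_1$-cut; it is obtained by the standard Solovay-style cut-shortening available in any sequential theory, carried out inside ${\sf AS}^+(\Theta)$ (which interprets ${\sf S}^1_2$). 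Since this shortening only adds a standardly finite $\nu$-depth to the defining formula, and since $\rho(J^\ddag_{n+1})$ and $\rho(Y_n)$ are both bounded by linear terms with leading coefficient $\constantref{czero}$, the resulting cut has $\rho$-complexity of order $\constantref{czero}n+\constantref{cten}$, as required.

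For the main implication I would reason inside ${\sf AS}^+(\Theta)$. Assume $[\mathscr A \vdash^{\Im_n(\Theta)}_n A]$, witnessed by a proof code $p \in \Im_n(\Theta)$ of $\mathscr A \rightslice_n A$. The whole point of taking $\Im_n(\Theta)$ to be a subcut of both factors is that, by downward closure, membership $p\in\Im_n(\Theta)$ forces the number of steps of $p$ into $Y_n$ and forces every formula occurring in $p$ to have $\nu$-complexity in $J^\ddag_{n+1}$; this is exactly the observation recorded just before the theorem. Applying the defining property of $Y_n$ to $y:={\sf length}(p)$ — which lies in $Y_n$ because $\Im_n(\Theta)\subseteq Y_n$ and $\Im_n(\Theta)$ is downwards closed — with $p\in\Lambda_{n,y}$, yields ${\sf ass}(p)\models_n A$. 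Finally, since ${\sf ass}(p)\subseteq \mathscr A$ and $\models_n$ is monotone in its premise set — enlarging the premises only strengthens the antecedent of the defining implication — we conclude $\mathscr A \models_n A$.

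The special case drops out by specialising $\mathscr A := \emptyset$ and taking $A$ to be a sentence in ${\sf sent}^{\Im_n(\Theta)}$. Then $\opr^{\Im_n(\Theta)}_{\Theta,n}A$ is by definition $[\emptyset \vdash^{\Im_n(\Theta)}_n A]$, so the main clause gives $\emptyset\models_n A$, which unwinds directly to $\forall\alpha\,{\sf sat}_n(+,\alpha,A)$, i.e.\ ${\sf true}_{\Theta,n}(A)$.

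The hard part is not this final assembly but the bookkeeping concealed in the phrase ``$p\in\Im_n(\Theta)$ forces the length and the formula-complexities into the right cuts''. One has to verify, for an efficient G\"odel numbering, that both the number of steps of $p$ and the maximal $\nu$-complexity of the formulas appearing in $p$ are bounded by $p$ itself, so that downward closure of the cut actually deposits them in $Y_n$ and in $J^\ddag_{n+1}$ respectively. This bound is precisely what makes the commutation clauses and property ${\sf Q}_{n+1}$ available for the formulas manipulated in the soundness argument behind $Y_n$; without the $\nu$-complexity constraint those clauses (Theorem~\ref{boeksmurf}) simply do not apply. Checking these coding estimates, and confirming that the shortening to an $\omega_1$-cut preserves the linear $\rho$-bound, is where the real care lies.
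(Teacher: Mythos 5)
Your proposal is correct and follows essentially the same route as the paper, which in fact states Theorem~\ref{summumsmurf} as a summary of the immediately preceding construction: $\Im_n(\Theta)$ is the $\omega_1$-shortening of $J^\ddag_{n+1}\cap Y_n$, the linear $\rho$-bound comes from the fact that shortening adds only standardly finite depth, and soundness follows because membership of $p$ in the cut deposits its length in $Y_n$ and its $\nu$-complexities in $J^\ddag_{n+1}$, after which the defining property of $Y_n$ and monotonicity of $\models_n$ finish the argument. Your closing remark about the coding bookkeeping is exactly the point the paper glosses over with ``ipso facto.''
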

  
\section{Small-Is-Very-Small Principles}\label{rost}
In this section we present the central argument of this paper. It is a simple Rosser argument. 
The bulk of the work has already been done in creating the setting for the result. I choose to give the pure argument
in Theorem~\ref{grotesmurf} rather than proceed immediately to the somewhat more
complicated Theorem~\ref{grotesmurfdc}. The more complicated version is needed for
application in model theory. 

\medent
First some preliminaries and notations, in order to avoid too heavy notational machinery.

We will work in sequential theories $U$ of signature $\Theta$ with sequence scheme $\mathcal S$. 
So, $\mathcal S: {\sf AS}^+ \stackarrow{{\sf dir}} U$. We can lift $\mathcal S$ to a direct interpretation 
$\mathcal S_\Theta:{\sf AS}^+(\Theta)\stackarrow{\sf dir} U$ by translating $\Theta$ identically.\footnote{In case $\mathcal S$ would be a sequence scheme for
a polysequential we would need a slight adaptation.}

We remind the reader that $\mathscr N:{\sf S}^1_2 \to {\sf AS}^+$.
We will write $N := \mathcal S \circ \mathscr N :{\sf S}^1_2 \to U$. So, e.g., $\delta_N = {\sf N}^{\mathcal S}$.
We write $\Im_n$ for $(\Im_n(\Theta))^{\mathcal S_\Theta}$. When we write numerals $\underline n$ these
are always numerals w.r.t. $N$. We note that the numerals really are eliminated using the term elimination algorithm.
However, this elimination just gives an overhead on 1 in $\rho_0$-complexity.

Let $\eta$ be a $\Sigma_1^{\sf b}$-formula defining a set of axioms. We write $\opr_\eta$ for provability from
the axioms in $\eta$. We write $\opr_\eta^J$ for the result of restricting the witnesses for $\opr_\eta$ to $J$.
We write  $\opr_{\eta,n}$ for the result of restricting the formulas in a witnessing proof to $\Delta_n^\ast$.\footnote{In previous papers, I
also used this notation to signal that the codes of the axioms were constrained to be $\leq n$. In this paper
 this extra demand is not made.} Formulas like 
$\opr^J_{\eta,n}$ have the obvious meanings.
We suppress the information about the signature $\Theta$, which should be clear from the context.
In case $\eta = (x = \gnum{A})$, we write $\opr_A$ for $\opr_\eta$.\footnote{Clearly, this introduces an ambiguity. E.g., does $\opr_\top$ mean 
provability form all sentences or from the axiom $\top$? However, what we intend will be always clear from the context,}

We will employ witness comparison notation:
\begin{itemize}
\item
$\exists x\in \delta_n\, A_0(x) \leq \exists y\in \delta_N\, B_0(y)$ iff $\exists x\in \delta_N\, (A_0(x) \wedge \forall y <^N x\, \neg\, B_0(y))$.
\item
$\exists x\in \delta_n\, A_0(x) < \exists y\in \delta_N\, B_0(y)$ iff $\exists x\in \delta_N\, (A_0(x) \wedge \forall y \leq^N x\, \neg\, B_0(y))$.
\item
$(\exists x\, A_0(x) \leq \exists y\, B_0(y))^\bot  := \exists y\, B_0(y) < \exists x\, A_0(x)$.
\item
$(\exists x\, A_0(x) < \exists y\, B_0(y))^\bot  := \exists y\, B_0(y) \leq \exists x\, A_0(x)$.
\end{itemize}

\begin{theorem}\label{grotesmurf}
Let $A$ be a finitely axiomatized sequential theory in a language with signature $\Theta$
 with sequence scheme $\mathcal S$.
 Consider any sentence $B$ in the language of $A$  of the form   $B := \exists x\in \delta_N \, B_0(x)$.
 Let $n := {\sf max}(\rho_0(A),\rho_0(B)+\constant{celeven},\rho_0(\mathcal S) +\constantref{celeven})$. Here $\constantref{celeven}$ is a fixed finite constant that does not
 depend on $A$, $B$ and $\mathcal S$. We will determine $\constantref{celeven}$ below.

Suppose   $A \vdash \exists x\in \Im_n\, B_0(x)$.
 Then, for some $k$, we have $A \vdash \exists x \leq^N \underline k\, B_0(x)$, or, equivalently,
$A \vdash \bigvee_{q\leq k}\, B_0(\underline q)$. 
\end{theorem}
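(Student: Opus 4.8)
The plan is to run a Friedman--Goldfarb--Harrington style Rosser argument, using the internal soundness of restricted, cut-bounded provability supplied by Theorem~\ref{summumsmurf}. Write $\opr^{\Im_n}_{A,n}\gnum R$ for the formalization of ``$R$ has an $A$-proof all of whose formulas are $\Delta^\ast_n$ and whose code lies in $\Im_n$'', read as the existential statement $\exists p\,\mathsf{prf}(p)$ in the proof variable $p$. Since $A$ is sequential, it interprets ${\sf S}^1_2$ through $N=\mathcal S\circ\mathscr N$ and so supports the diagonal lemma. Applying it, I obtain a sentence $R$ with
\[ A \vdash R \;\leftrightarrow\; \big(\exists x\in\delta_N\, B_0(x) \,<\, \opr^{\Im_n}_{A,n}\gnum R\big), \]
the right-hand side being, by the witness-comparison conventions, $\exists x\,(B_0(x)\wedge \forall p\leq^N x\,\neg\,\mathsf{prf}(p))$. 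Its Rosser dual $R^\bot$ equals $\opr^{\Im_n}_{A,n}\gnum R \leq \exists x\in\delta_N\,B_0(x)$, i.e. $\exists p\,(\mathsf{prf}(p)\wedge\forall x<^N p\,\neg\,B_0(x))$.

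The heart of the argument is to show that $A\vdash R$. Comparing least witnesses inside the cut $\Im_n$, which is closed enough to deliver the least-number principle for the formulas in play, one checks the two purely arithmetical facts $A\vdash\neg\,(R\wedge R^\bot)$ and $A\vdash(\exists x\in\Im_n\, B_0(x)\wedge\neg\,R^\bot)\to R$. Next I invoke reflection: interpreting Theorem~\ref{summumsmurf} along $\mathcal S_\Theta$ yields $A\vdash \opr^{\Im_n}_{A,n}\gnum R\to R$, since $R$ is disquotationally evaluated by the partial satisfaction predicate (Theorem~\ref{boeksmurf}) once its complexity is within the admissible range. Because $R^\bot\to\opr^{\Im_n}_{A,n}\gnum R$, this gives $A\vdash R^\bot\to R$; together with the exclusivity fact $A\vdash R^\bot\to\neg R$ it forces $A\vdash\neg\,R^\bot$. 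Feeding this and the hypothesis $A\vdash\exists x\in\Im_n\,B_0(x)$ into the second arithmetical fact, I conclude $A\vdash R$.

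It remains to read off a standard bound. Because the entire derivation of $R$ just described uses only $\Delta^\ast_n$ formulas and a proof object that, being standard, lies in $\Im_n$, that derivation is literally a witness for $\opr^{\Im_n}_{A,n}\gnum R$: there is a standard proof code $p_0$ with $A\vdash\mathsf{prf}(\underline{p_0})$, by provable $\Sigma$-completeness for the genuine proof $p_0$. From $A\vdash R$ we have $A\vdash\exists x\,(B_0(x)\wedge\forall p\leq^N x\,\neg\,\mathsf{prf}(p))$; fixing such an $x$ and using $\mathsf{prf}(\underline{p_0})$ gives $x<^N\underline{p_0}$, whence $A\vdash\exists x\leq^N\underline k\,B_0(x)$ with $k:=p_0$. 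Finally, provably in $N$ every $x\leq^N\underline k$ equals one of $\underline 0,\dots,\underline k$, which yields the stated equivalent form $A\vdash\bigvee_{q\leq k}B_0(\underline q)$.

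The substantive work, and the point at which $\constantref{celeven}$ is fixed, is the complexity bookkeeping that makes the previous paragraph legitimate. One must verify that $R$ itself is $\Delta^\ast_n$ --- its complexity is $\rho_0(B_0)$ plus a fixed overhead from the witness comparison and from the (linearly bounded) complexity of $\opr^{\Im_n}_{A,n}$ --- and that the reflection instance together with the two arithmetical lemmas can all be carried out with $\Delta^\ast_n$ formulas inside $\Im_n$. Choosing $n=\max(\rho_0(A),\rho_0(B)+\constantref{celeven},\rho_0(\mathcal S)+\constantref{celeven})$ with $\constantref{celeven}$ large enough to absorb every such overhead is precisely what guarantees that the proof of $R$ is a genuine $\Im_n$-restricted $n$-proof, so that $\mathsf{prf}(\underline{p_0})$ is available. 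I expect this to be the only delicate step; the Rosser combinatorics of the two middle paragraphs are, as the paper promises, simple once the setting is in place.
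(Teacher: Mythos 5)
Your overall architecture (Rosser fixed point, reflection via Theorem~\ref{summumsmurf}, extraction of a standard proof code) is the right one, but there is a genuine gap in where you place the cut $\Im_n$. You build the cut into the provability predicate occurring \emph{inside} the fixed point: your $\mathsf{prf}(p)$ asserts that $p$ lies in $\Im_n$. Since $\rho_0(\Im_n)$ grows linearly in $n$ (of order $\constantref{czero}n+\constantref{cten}$, plus the overhead of $\mathcal S$), this forces $\rho_0(R)\geq \constantref{czero}n+O(1)$, which exceeds $n$ once $n$ is large. So $R$ is not a $\Delta^\ast_n$-formula, no \emph{fixed} constant $\constantref{celeven}$ can ``absorb every such overhead'' as you propose --- the overhead is not fixed but linear in $n$ --- and both the reflection step (which needs $R$ within the range of ${\sf sat}_n$) and the final extraction of a standard restricted proof of $R$ collapse. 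The paper's proof keeps $\Im_n$ \emph{out} of the fixed point: $R$ is diagonalized against the unrestricted $\opr_{A,\underline n}$, whose $\rho_0$-complexity is a constant independent of $n$ (the numeral $\underline n$ costs only a fixed overhead); the cut enters only in the case distinction, where downward closure of $\Im_n$ guarantees that a $B_0$-witness $x\in\Im_n$ together with $\neg\,\opr^{\Im_n}_{A,n}R$ already yields ``no proof of $R$ below $x$''.

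Two smaller points. First, your auxiliary fact $A\vdash(\exists x\in\Im_n\,B_0(x)\wedge\neg\,R^\bot)\to R$ is not available as stated: deriving it requires locating a least $B_0$-witness or a least proof code, and the least-number principle is not at our disposal for formulas of the relevant complexity (the paper explicitly remarks that minimization is \emph{not} used in this argument). The repair is to case-split directly on $\opr^{\Im_n}_{A,n}R$ rather than on $R^\bot$: in the negative case the $\Im_n$-witness of $B_0$ immediately witnesses $R$, and in the positive case reflection gives $R$. Second, the derivation establishing $A\vdash R$ is not ``literally'' an $n$-proof --- it passes through the partial satisfaction predicates, whose complexity is again linear in $n$ --- so one must invoke cut-elimination to obtain $A\vdash_n R$ before a standard proof code $\underline k$ can be read off by $\Sigma^{\sf b}_1$-completeness.
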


\begin{proof}
We work under the conditions of the theorem.
Using the G\"odel Fixed Point Lemma, we find a sentence $R$
such that $A \vdash R \iff B \leq {\opr_{A,\underline n}^N R}$. 
We need that $\rho_0(R) \leq n$. 

\medent
{\small Under the usual G\"odel construction, $R$ is of the following form:
\[ \exists x\, ( \delta_N(x) \wedge B_0(x) \wedge 
\exists z\, (\delta_N(z) \wedge {\sf sub}^N(\underline \ell, \underline \ell, z) \wedge
 \forall y\, (y <^N x \to \neg\, {\sf prov}_{A,\underline n}^N(y,z)))).\] 
 Thus, $\rho_0(R)$ is estimated by \[ {\sf max}(\rho_0({\sf sub}),\rho_0({\sf prov})) + {\sf max}(\rho_0(\mathcal S), \rho_0(B_0)) +3.\]
 Here the $+3$ is due to the additional quantifiers. We note that, if we unravel the numerals $\underline n$ wide scope, we
 even just need $+2$.
 So, we can take $\constantref{celeven} :=  {\sf max}(\rho_0({\sf sub}),\rho_0({\sf prov})) +3$.}

\medent
Reason in $A$. We have $\exists x \in \Im_n\, B_0(x)$.  In case $\neg\, \opr^{\Im_n}_{A,n} R$, we have $R$.
Suppose $\opr^{\Im_n}_{A,n} R$. By reflection, as guaranteed by Theorem~\ref{summumsmurf}, we find $R$.
So, in both cases, we may conclude that $R$. We leave $A$ again.

\medent
Thus, we have shown (i) $A \vdash R$. By cut-elimination, we find:
$A \vdash_n R$. 
 Hence, (ii) for some $k$, we find
$A \vdash {\sf proof}^N_{A,n}(\underline k,  R)$.
Combining (i) and (ii), we  we may conclude that $A \vdash \exists x \leq^N \underline k\, B_0(x)$, or, equivalently,
$A \vdash \bigvee_{q\leq k}\, B_0(\underline q)$. 
\end{proof}

\noindent
We note that, due to the use of cut-elimination, we need the totality of superexponentiation in the metatheory.
Such theorems usually leave watered-down traces in weaker metatheories. We do not explore such possibilities in the present paper.

The above argument has some analogies with Harvey Friedman's beautiful proof that, in a constructive setting, the disjunction property
implies the existence property. See \cite{frie:disj75}. I analyzed this argument in \cite{viss:fefe14}, having the benefit of many perceptive
remarks by Emil Je\v{r}\'abek. One surprising aspect of the above proof is that the minimization principle is not used. Joost Joosten pointed
out to me in conversation that the closely related Friedman-Goldfarb-Harrington Theorem also can be proven without using minimization. 

For our model theoretic applications we need a variant of Theorem~\ref{grotesmurf} that adds domain constants.
We allow for the domain constants the exceptional position that they are real constants rather than
unary predicates posing as constants.

\begin{theorem}\label{grotesmurfdc}
Consider a finite set of domain constants $\mathcal C$. 
Let $A_0$ be any finitely axiomatized sequential theory with signature $\Theta$ and sequence scheme $\mathcal S$.
Let $A_1:=A_1(\vec c\,)$ be any sentence in the language with signature $\Theta+\mathcal C$. Let $A := A_0 \wedge A_1$.
 
 Consider any sentence $B(\vec c\,)$ in the language of signature $\Theta+\mathcal C$  of the form   $B(\vec c\,)  := \exists x\in \delta_N\, B_0(x,\vec c\,)$. 
 Let $n := {\sf max}(\rho_0(A),\rho_0(B)+\constantref{celeven},\rho_0(\mathcal S) +\constantref{celeven})$. 

Suppose $A(\vec c\,) \vdash \exists x\in \Im_n \, B_0(x,\vec c\,)$.\footnote{The fact that the constants in  $\mathcal C$
do not occur in  $\Im_n = \Im^{\mathcal S}_n(\Theta)$ is the whole point of the refined result.} 
Then,
for some $k$, we have that  $A(\vec c\,) \vdash \exists x \leq^N \underline k\, B_0(x,\vec c\,)$, or, equivalently,
$A(\vec c\,) \vdash \bigvee_{q\leq k}\, B_0(\underline q,\vec c\,)$. 
\end{theorem}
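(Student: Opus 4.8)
The plan is to mimic the proof of Theorem~\ref{grotesmurf} essentially verbatim, treating the domain constants $\vec c$ as passive ``spectators'' that ride along through the whole Rosser construction. The key observation, flagged in the footnote to the hypothesis, is that the cut $\Im_n = \Im_n^{\mathcal S}(\Theta)$ is defined purely from the signature $\Theta$ and the sequence scheme $\mathcal S$, so none of the constants in $\mathcal C$ occur in it. This is exactly what makes the Rosser self-reference go through: the fixed-point sentence will mention only formulas over $\Theta$ (together with the fixed standard machinery for ${\sf sub}$ and ${\sf prov}$), while the constants appear only inside $B_0$ and $A_1$ as free parameters.

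First I would apply the G\"odel Fixed Point Lemma to produce a sentence $R = R(\vec c\,)$ with
\[ A(\vec c\,) \vdash R \iff B(\vec c\,) \leq {\opr^N_{A,\underline n} R}, \]
where now $R$ has the same shape as in Theorem~\ref{grotesmurf} except that $B_0$ carries the extra argument places $\vec c$. The same complexity bookkeeping applies: $\rho_0(R)$ is estimated by ${\sf max}(\rho_0({\sf sub}),\rho_0({\sf prov})) + {\sf max}(\rho_0(\mathcal S),\rho_0(B_0)) + 3$, so with the constant $\constantref{celeven}$ already fixed in Theorem~\ref{grotesmurf} we again get $\rho_0(R) \leq n$. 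The point is that adding free constant symbols does not raise the $\rho_0$-complexity, since $\rho_0$ only counts quantifier alternations; constants behave like free variables already present.

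Next I would reason inside $A(\vec c\,)$ exactly as before. From the hypothesis $A(\vec c\,) \vdash \exists x\in\Im_n\, B_0(x,\vec c\,)$ we split on whether $\opr^{\Im_n}_{A,n} R$ holds. If not, the witness-comparison clause in the fixed point immediately yields $R$; if so, partial reflection (Theorem~\ref{summumsmurf}) gives $R$. Crucially, Theorem~\ref{summumsmurf} is stated for ${\sf AS}^+(\Theta)$ and hence lifts along $\mathcal S_\Theta$ to $U = A_0$, and reflection is unaffected by the presence of the constants, because the constants occur in the object formula $R$ purely as parameters and the reflection principle quantifies over assignments $\alpha$ which can absorb them. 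Thus in both cases $A(\vec c\,) \vdash R$.

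Finally, with $A(\vec c\,)\vdash R$ established, cut-elimination in the metatheory gives $A(\vec c\,) \vdash_n R$, so for some standard $k$ we have $A(\vec c\,)\vdash {\sf proof}^N_{A,n}(\underline k, R)$, and combining this with the fixed-point equivalence unwinds the witness comparison to yield $A(\vec c\,)\vdash \exists x\leq^N \underline k\, B_0(x,\vec c\,)$, equivalently $A(\vec c\,)\vdash \bigvee_{q\leq k} B_0(\underline q,\vec c\,)$. The only step that requires genuine care rather than transcription is checking that the constants $\vec c$ never leak into $\Im_n$ or into the standard proof predicates ${\sf sub}, {\sf prov}$: if they did, the fixed-point sentence would not be well-formed over $\Theta$ alone and the whole argument would break. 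Since $\Im_n$ is built from $\Theta$ and $\mathcal S$ only, this separation holds, and I expect this bookkeeping about where the constants may and may not appear to be the main (though still light) obstacle.
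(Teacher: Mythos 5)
Your proposal is correct and follows the paper's proof essentially verbatim: the same Rosser fixed point $R(\vec c\,)$, the same case split on $\opr^{\Im_n}_{A(\vec c\,),n}R(\vec c\,)$, and the same cut-elimination finish. The only detail the paper spells out that you leave implicit is how reflection copes with the constants: since Theorem~\ref{summumsmurf} only supplies satisfaction for $\Theta$-formulas, one first replaces $\vec c$ by fresh variables $\vec v$ to get $[A(\vec v\,)\vdash^{\Im_n}_n R(\vec v\,)]$, applies reflection to obtain $A(\vec v\,)\models_n R(\vec v\,)$, and then uses that $A$ and $R$ are standard formulas to pass from satisfaction under the assignment sending $\vec v$ to the values of $\vec c$ back to truth, yielding $R(\vec c\,)$ --- which is exactly your ``assignments absorb the parameters'' move made precise.
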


\begin{proof}
We work under the conditions of the theorem. 
We find a sentence $R(\vec c\,)$
such that $A(\vec c\,) \vdash R(\vec c\,) \iff B(\vec c\,) \leq {\opr_{A(\vec c\,),\underline n}^N R(\vec c\,)}$. 

\medent
Reason in $A(\vec c)$.  In case $\neg\, \opr^{\Im_n}_{A(\vec c\,),n} R(\vec c\,)$, we have $R(\vec c\,)$.
Suppose $\opr^{\Im_n}_{A(\vec c\,),n} R(\vec c\,)$.
Replacing the extra constants $\vec c$, in $A(\vec c\,)$ and $R(\vec c\,)$ by fresh
variables $\vec v$, we get $[A(\vec v\,) \vdash^{\Im_n}_n R(\vec v\,)]$. Hence, $A(\vec v\,) \models_n R(\vec v\,)$.
Since $A$ and $R$ are standard and since we have $A(\vec c\,)$, we find $R(\vec c\,)$. 

Thus, we have shown (i) $A(\vec c\,) \vdash R(\vec c\,)$. By cut-elimination, we find:
$A(\vec c\,) \vdash_n R(\vec c\,)$. 
 Hence, (ii) for some $k$, we have 
$A(\vec c\,) \vdash {\sf proof}^N_{A(\vec c\,),n}(\underline k,  R(\vec c\,))$.
Combining (i) and (ii), we get $A(\vec c\,) \vdash \exists x \leq^N \underline k\, B_0(x,\vec c\,)$, or, equivalently,
$A(\vec c\,) \vdash \bigvee_{q\leq k}\, B_0(\underline q,\vec c\,)$. 
\end{proof}

\noindent
We call a theory $U$ \emph{restricted} if, for some $m$ all its axioms are in $\Delta^\ast_m$.

\begin{theorem}\label{grotesmurfres}
Suppose $A_0$ is a finitely axiomatized sequential theory in signature $\Theta$ with sequence
scheme $\mathcal S$.
Let $m$ be any number such that $m \geq \rho_0(A_0)$. Let $\mathcal C$ be a set of domain constants:
$\mathcal C$ is allowed to have any cardinality. Let $U$ be a restricted theory bounded by $m$ in the
language of signature $\Theta+\mathcal C$ extending $A_0$. The theory $U$ may have any complexity.

Consider any sentence $B$ in the language of signature $\Theta+\mathcal C$  of the form   $B := \exists x\in \delta_N\, B_0(x)$.
 Let $n := {\sf max}(m,\rho_0(B)+\constantref{celeven},\rho_0(\mathcal S) +\constantref{celeven})$. 
 
 Suppose $U \vdash \exists x\in \Im_n \, B_0(x)$.
Then,
for some $k$, we have $U \vdash \exists x \leq^N \underline k\, B_0(x)$, or, equivalently,
$U \vdash \bigvee_{q\leq k}\, B_0(\underline q)$. 
\end{theorem}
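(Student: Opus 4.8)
The plan is to reduce Theorem~\ref{grotesmurfres} to Theorem~\ref{grotesmurfdc} by a compactness argument, the essential point being that the index $n$ has been defined using the uniform bound $m$ rather than the $\rho_0$-complexity of any particular finite axiomatization. This decoupling is exactly what lets a \emph{single} cut $\Im_n$ serve \emph{all} finite subtheories of $U$ at once.

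First I would apply compactness to the hypothesis $U\vdash \exists x\in \Im_n\, B_0(x)$. The witnessing proof uses only finitely many axioms of $U$, say $C_1,\ldots,C_p$, each in $\Delta^\ast_m$ and hence of $\rho_0$-complexity $\leq m$; moreover the proof together with $B_0$ mentions only finitely many constants from $\mathcal C$, which I collect into a finite tuple $\vec c$ over $\Theta+\mathcal C_0$ with $\mathcal C_0\subseteq \mathcal C$ finite. Since $U$ extends $A_0$, I set $A_1(\vec c\,):= C_1\wedge\cdots\wedge C_p$ and $A := A_0\wedge A_1(\vec c\,)$, a single sentence over $\Theta+\mathcal C_0$ in exactly the form required by Theorem~\ref{grotesmurfdc}. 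Including $A_0$ as a conjunct guarantees that $\mathcal S$ is still a direct interpretation of ${\sf AS}^+$ in $A$, i.e.\ a sequence scheme for $A$; by construction $A\vdash \exists x\in \Im_n\, B_0(x)$; and crucially $U\vdash A$, so any conclusion $A\vdash \exists x\leq^N\underline k\, B_0(x)$ lifts at once to $U$.

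Next I would check the complexity bookkeeping. Since $\rho_0$ of a conjunction is the maximum of the $\rho_0$ of its conjuncts, and $\rho_0(A_0)\leq m$ together with $\rho_0(C_i)\leq m$, we get $\rho_0(A)\leq m\leq n$. Combined with $n\geq \rho_0(B)+\constantref{celeven}\geq \rho_0(B_0)+\constantref{celeven}$ and $n\geq \rho_0(\mathcal S)+\constantref{celeven}$, the index $n$ dominates the index $\max(\rho_0(A),\rho_0(B)+\constantref{celeven},\rho_0(\mathcal S)+\constantref{celeven})$ that Theorem~\ref{grotesmurfdc} would compute for this particular $A$.

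Finally I would run the argument of Theorem~\ref{grotesmurfdc} with this fixed $n$ in place of the minimal index. Every requirement on the index there is a lower bound, so a larger $n$ does no harm: the G\"odel fixed point $R(\vec c\,)$ built from $\opr_{A(\vec c\,),\underline n}^N$ has $\rho_0(R)\leq \max(\rho_0(\mathcal S),\rho_0(B_0))+\constantref{celeven}\leq n$ (this is precisely the estimate behind the choice of $\constantref{celeven}$ in Theorem~\ref{grotesmurf}); the reflection step invokes Theorem~\ref{summumsmurf} for the cut $\Im_n$ via the constants-to-variables trick, which is legitimate exactly because $\Im_n=\Im_n^{\mathcal S}(\Theta)$ does not mention $\mathcal C$; and the cut-elimination step needs only $\rho_0(A),\rho_0(R)\leq n$. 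This yields $A(\vec c\,)\vdash \exists x\leq^N\underline k\, B_0(x)$ for some $k$, and $U\vdash A$ then gives the conclusion. The one point demanding care is just this separation of the index from the finite subtheory: one must confirm that the single $n$ is simultaneously adequate for \emph{every} $A$ produced by compactness, which is what the bound $m$ secures.
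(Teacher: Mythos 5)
Your proposal is correct and follows exactly the route the paper takes: the paper's own proof is the one-line remark that the result ``is immediate from Theorem~\ref{grotesmurfdc}, using compactness.'' You have simply spelled out the details the paper leaves implicit --- extracting the finite subtheory and finite constant set, packaging them as $A_0\wedge A_1(\vec c\,)$, and verifying that the uniform bound $m$ makes the single index $n$ dominate the index Theorem~\ref{grotesmurfdc} would compute, with the (correct) observation that the argument there is monotone in $n$.
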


\begin{proof}
The theorem is immediate from Theorem~\ref{grotesmurfdc}, using compactness.
\end{proof}

\noindent
It is of course trivial to take the contraposition of Theorem~\ref{grotesmurfres}. However this contraposion
has some heuristic value. So we state it here as a separate theorem.

\begin{theorem}\label{ijdelesmurf}
Suppose $A_0$ is a finitely axiomatized sequential theory in signature $\Theta$ with sequence
scheme $\mathcal S$.
Let $m$ be any number such that $m \geq \rho_0(A_0)$. Let $\mathcal C$ be a set of domain constants:
$\mathcal C$ is allowed to have any cardinality. Let $U$ be a restricted theory bounded by $m$ in the
language of signature $\Theta+\mathcal C$ extending $A_0$. The theory $U$ may have any complexity.

Consider any formula $C(x)$. Let $n: ={\sf max}(m,\rho_0(C)+\constantref{celeven},\rho_0(\mathcal S) +\constantref{celeven})$. 

If the theory $U+ \verz{C(\underline q) \mid q \in \omega}$ is consistent, then the theory
$U + \forall x \in \Im_n\, C(x)$ is consistent.
\end{theorem}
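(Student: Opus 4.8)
The plan is to read this statement off as the contrapositive of Theorem~\ref{grotesmurfres}, specialized to the matrix $B_0(x) := \neg\, C(x)$. So I would put $B := \exists x \in \delta_N\, \neg\, C(x)$, which is of the shape $\exists x\in\delta_N\, B_0(x)$ demanded by Theorem~\ref{grotesmurfres}. Since negation merely interchanges $\rho_{\exmo}$ and $\rho_{\unmo}$, we have $\rho_0(\neg\, C) = \rho_0(C)$; and since the complexity demand in the Rosser construction underlying Theorem~\ref{grotesmurfres} bears on the matrix $B_0$ (through the fixed point $R$, whose $\rho_0$ is governed by $\rho_0(B_0)$ and $\rho_0(\mathcal S)$) rather than on $B$ as a whole, the value $n = {\sf max}(m, \rho_0(C)+\constantref{celeven}, \rho_0(\mathcal S)+\constantref{celeven})$ chosen here is exactly the one needed, so both theorems refer to the same cut $\Im_n$.

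Next I would rewrite the two consistency claims as the derivability claims occurring in Theorem~\ref{grotesmurfres}. On the hypothesis side, by the deduction theorem $U + \forall x \in \Im_n\, C(x)$ is inconsistent exactly when $U \vdash \neg\, \forall x \in \Im_n\, C(x)$, i.e. when $U \vdash \exists x \in \Im_n\, \neg\, C(x)$, that is $U \vdash \exists x \in \Im_n\, B_0(x)$. Hence consistency of $U + \forall x \in \Im_n\, C(x)$ is precisely the \emph{failure} of the hypothesis of Theorem~\ref{grotesmurfres}.

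On the conclusion side I would use compactness together with the deduction theorem: $U + \verz{C(\underline q) \mid q \in \omega}$ is consistent if and only if each finite fragment $U + \verz{C(\underline q) \mid q \leq k}$ is consistent, which in turn holds if and only if, for every $k$, $U \nvdash \bigvee_{q \leq k}\, \neg\, C(\underline q)$, that is $U \nvdash \bigvee_{q \leq k}\, B_0(\underline q)$. This is exactly the negation of the conclusion of Theorem~\ref{grotesmurfres}.

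With these two dictionaries in hand, the implication ``hypothesis $\To$ conclusion'' furnished by Theorem~\ref{grotesmurfres} becomes, read contrapositively, the desired implication ``$U + \verz{C(\underline q) \mid q \in \omega}$ consistent $\To$ $U + \forall x \in \Im_n\, C(x)$ consistent''. I do not expect any genuine obstacle here: the entire content is the pair of routine translations via compactness and the deduction theorem. The only step deserving a second glance is the complexity bookkeeping---checking that, with $B_0 = \neg\, C$, the side condition on $n$ really comes out in terms of $\rho_0(C)$ (through $\rho_0(\neg\, C)=\rho_0(C)$ and the fact that $R$ is built from $B_0$) rather than $\rho_0(B)$---which is exactly why this reformulation is worth recording even though it is formally a triviality.
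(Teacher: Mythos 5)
Your proposal is correct and is exactly the paper's own proof, which reads in full: ``We apply Theorem~\ref{grotesmurfres} to $\exists x\in \delta_N\, \neg\, C(x)$ and take the contraposition.'' Your additional care about the complexity bookkeeping (that the bound really tracks $\rho_0(B_0)=\rho_0(\neg\,C)=\rho_0(C)$ via the fixed point $R$, despite the statement of Theorem~\ref{grotesmurfres} being phrased in terms of $\rho_0(B)$) is a sensible observation the paper leaves implicit.
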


\begin{proof}
We apply Theorem~\ref{grotesmurfres} to $\exists x\in \delta_N\, \neg \, C(x)$ and  take the contraposition.
\end{proof}

\section{A Conservativity Result}\label{core}
We can use the machinery we built up to prove a Lindstr\"om-style result on conservative extensions.

Suppose $U$ is a restricted, sequential, recursively enumerable theory with sequence scheme $\mathcal S$.
  Let $p$ be a bound for the complexity of the
axioms of $U$. 
By Craig's trick,  we can give a $\Sigma_1^{\sf b}$-axiomatization of $U$. Say the   $\Sigma_1^{\sf b}$-formula representing the
axioms is $\eta$. Suppose $A_0$ is a finite subtheory of $U$ such that $\mathcal S$ makes $A_0$ sequential.
Clearly $U$ can be axiomatized by \[ A_0 + \verz{ \eta^N(\underline q) \to {\sf true}_n(\underline q) \mid q \in \omega}.\]

\noindent
This representation of the axiom set leads immediately to the following theorem.

\begin{theorem}\label{conservatievesmurf}
Suppose $U$ is a restricted, sequential, recursively enumerable theory. Consider any number $m$.
Then there is a finitely axiomatized sequential theory $A$ in the same language that extends $U$ and is $\Delta_m^\ast$-conservative over $U$.
\end{theorem}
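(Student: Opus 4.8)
The plan is to take for $A$ the finitely axiomatized theory suggested by the representation of $U$ displayed just before the statement. First I would fix a disquotation index $n_0$, large enough (and determined by the complexity bound of $U$ alone) that ${\sf true}_{n_0}$ correctly evaluates every axiom of $U$: since $U$ is restricted, its axioms lie in some $\Delta^\ast_p$, and for standard sentences of this complexity the full commutation clauses of Theorem~\ref{boeksmurf} give ${\sf true}_{n_0}(\gnum \varphi) \iff \varphi$ by a metatheoretic induction on $\varphi$, provided $n_0 \geq p$. Writing $D(x) := \eta^N(x) \to {\sf true}_{n_0}(x)$, the representation asserts that $U$ is axiomatized by $A_0 + \verz{D(\underline q) \mid q \in \omega}$. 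I would then set $n := {\sf max}(m, \rho_0(D)+\constantref{celeven}, \rho_0(\mathcal S)+\constantref{celeven})$ and define
\[ A := A_0 + \forall x \in \Im_n\, D(x). \]
This $A$ is finitely axiomatized ($A_0$ is finite and the reflection clause is a single sentence) and sequential, since $\mathcal S$ already interprets ${\sf AS}^+$ in $A_0 \subseteq A$.

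Next I would verify that $A$ extends $U$. For each standard axiom code $q$ we have $A \vdash \underline q \in \Im_n$, because $\Im_n$ is a cut containing $0$ and closed under successor; instantiating the universal clause gives $A \vdash D(\underline q)$. As $A \supseteq A_0$ proves the true $\Sigma^{\sf b}_1$-fact $\eta^N(\underline q)$ for each genuine axiom, we obtain $A \vdash {\sf true}_{n_0}(\underline q)$, and disquotation yields the axiom itself. Hence $A$ proves every axiom of $U$, so $A \supseteq U$.

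The heart of the matter is $\Delta^\ast_m$-conservativity, which I would prove by contraposition using Theorem~\ref{ijdelesmurf}. Assume without loss of generality that $m \geq \rho_0(A_0)$, since conservativity for a larger complexity class implies it for a smaller one. Let $C$ be a $\Delta^\ast_m$-sentence with $U \nvdash C$, so that $U + \neg\, C$ is consistent. Apply Theorem~\ref{ijdelesmurf} to the restricted theory $A_0 + \neg\, C$ (bounded by $m$, since $\rho_0(\neg\, C) = \rho_0(C) \leq m$, and extending $A_0$) together with the formula $D(x)$: the cut index produced is precisely ${\sf max}(m, \rho_0(D)+\constantref{celeven}, \rho_0(\mathcal S)+\constantref{celeven}) = n$. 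The hypothesis required by that theorem is the consistency of $(A_0 + \neg\, C) + \verz{D(\underline q) \mid q \in \omega}$; but this theory is a subtheory of $U + \neg\, C$, because each $D(\underline q)$ is provable in $U$, and a subtheory of a consistent theory is consistent. The conclusion is that $(A_0 + \neg\, C) + \forall x \in \Im_n\, D(x) = A + \neg\, C$ is consistent, i.e. $A \nvdash C$, as desired.

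The main obstacle is exactly the bookkeeping of the two indices. One must keep the disquotation index $n_0$, fixed by the complexity of the axioms of $U$, strictly separate from the cut index $n$, which governs how far the single reflection axiom reaches and which, since $\rho_0(D)$ grows linearly in $n_0$, is necessarily the larger of the two; the observation dissolving the apparent circularity is that $D$, and hence $\rho_0(D)$, is completely determined before $n$ is chosen, so the index delivered by Theorem~\ref{ijdelesmurf} coincides with the one built into $A$. A secondary point to check with care is the displayed representation of $U$ itself --- in particular that $U \vdash D(\underline q)$ for every $q$ --- since this is what licenses transferring consistency of $U + \neg\, C$ down to the theory to which Theorem~\ref{ijdelesmurf} is applied.
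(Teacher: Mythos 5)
Your proposal is correct and follows essentially the same route as the paper: the same finitely axiomatized candidate $A := A_0 + \forall x\in \Im_n\, D(x)$ built from the displayed re-axiomatization of $U$, with conservativity obtained by contraposition from Theorem~\ref{ijdelesmurf}. The only (harmless) difference is that you absorb $\neg\, C$ into the base theory and apply Theorem~\ref{ijdelesmurf} to $D(x)$ alone, whereas the paper keeps $A_0$ as the base and applies it to $B(x)\wedge\neg\, C$; your extra bookkeeping of the two indices and the verification that $A$ extends $U$ just makes explicit what the paper leaves as ``clearly.''
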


\begin{proof}
By our above observations there is a finitely axiomatized sequential theory
$A_0$ and a formula $B(x)$ such that $U$ can be axiomatized as $A_0+\verz{B(\underline q) \mid q\in \omega}$.
 Let $n := {\sf max}(\rho_0(A_0), \rho_0(B)+\constantref{celeven}, m+\constantref{celeven}, \rho_0(\mathcal S)+\constantref{celeven})$. 

We take $A := A_0 +\forall x\in \Im_n \, B(x) $. We note that $A$ is a finitely axiomatized extension of
$U$. Consider any $C \in \Delta_m^\ast$. Suppose $U \nvdash C$. Then, the theory
$A_0 + \verz{ (B(\underline q) \wedge \neg \, C) \mid q\in \omega}$ is consistent. It follows that 
 $A_0 +\forall x\in \Im_n \, (B(x) \wedge \neg\, C)$ is consistent. In other words, we find $A \nvdash C$.  
\end{proof}

\noindent
We have the following corollary.
\begin{corollary}
Suppose $U$ is a restricted, sequential, recursively enumerable theory. Suppose further that $D$ is a finite extension of $U$ such that $U \nvdash D$.
Then, there is a finite extension $D'$ of $U$, such that $D \vdash D'$ but $D'\nvdash D$. 
\end{corollary}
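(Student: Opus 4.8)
The plan is to write $D = U + \delta$, where $\delta$ is the conjunction of the finitely many axioms of $D$ beyond $U$, so that the hypothesis $U \nvdash D$ becomes $U \nvdash \delta$. I would then produce a single sentence which, adjoined to $U$, lands \emph{strictly below} $D$. The device is the disjunction $\sigma \vee \delta$, where $\sigma$ finitely axiomatizes a conservative extension of $U$ furnished by Theorem~\ref{conservatievesmurf}. The whole point of the disjunction is that it converts an extension sitting above $U$ into a finite extension sitting below $D$.

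First I would fix $m := \rho_0(\delta)$, so that $\delta \in \Delta_m^\ast$ by the characterization $\Delta_m^\ast = \verz{C \mid \rho_0(C) \leq m}$. Since $U$ is restricted, sequential and recursively enumerable, Theorem~\ref{conservatievesmurf} yields a finitely axiomatized sequential theory $A$ in the same language that extends $U$ and is $\Delta_m^\ast$-conservative over $U$. Let $\sigma$ be a single axiom of $A$; then $\sigma \vdash U$, so $U + \sigma \equiv A$. Because $\delta \in \Delta_m^\ast$ and $U \nvdash \delta$, conservativity gives $A \nvdash \delta$, i.e. $A + \neg\, \delta$ is consistent. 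This is the only genuine input to the argument.

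Next I would set $D' := U + (\sigma \vee \delta)$, which is manifestly a finite extension of $U$ (and is sequential, via the scheme $\mathcal S$). The three required facts are then routine. Since $D \vdash \delta$ we have $D \vdash \sigma \vee \delta$, hence $D \vdash D'$. Since both $\sigma$ and $\delta$ prove $U$, reasoning by cases shows $\sigma \vee \delta \vdash U$, so $D'$ really extends $U$. Finally, arguing by cases once more, $D' \vdash \delta$ holds iff $U + \sigma \vdash \delta$, i.e. iff $A \vdash \delta$; as we arranged $A \nvdash \delta$, we conclude $D' \nvdash \delta$, and since $D'$ extends $U$ this is exactly $D' \nvdash D$.

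I expect no real obstacle beyond the conservativity step, which is precisely what Theorem~\ref{conservatievesmurf} buys: it upgrades the bare hypothesis $U \nvdash \delta$ into a separation $A \nvdash \delta$ witnessed by a \emph{finitely axiomatized} overtheory $A$ of $U$. It is the finiteness of the axiomatization of $A$ that makes $\sigma \vee \delta$ a single sentence, and hence $D'$ a legitimate finite extension of $U$ lying strictly between $U$ and $D$.
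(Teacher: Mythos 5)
Your proof is correct and is essentially the paper's own argument: the paper likewise invokes Theorem~\ref{conservatievesmurf} to obtain a finitely axiomatized, $\Delta^\ast_m$-conservative extension $A$ of $U$ and sets $D' := D \vee A$, which is exactly your $U + (\sigma \vee \delta)$, with the same case analysis showing $D' \vdash D$ would force $A \vdash \delta$ against conservativity. (One cosmetic slip: $\delta$ alone need not prove $U$, but this is immaterial since your $D'$ contains $U$ by definition.)
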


\begin{proof}
Let $m$ be a $\rho_0$-bound on $U$ and on $D$.  Let $A$ be the sentence promised in Theorem~\ref{conservatievesmurf} for
$\Delta_m^\ast$. Let $D' := D \vee A$. Clearly $D \vdash D'$ and $D' \vdash U$. Suppose
$D' \vdash D$. Then, it follows that $A \vdash D$, contradicting the $\Delta^\ast_m$-conservativity of
$A$ over $U$.
\end{proof}
 
\noindent
Since, as is well-known, the finitely axiomatized sequential theories in the signature of $U$ are dense
w.r.t. $\dashv$, it follows that we can add to the statement of the Corollary that $U \nvdash D'$: in case the
$D'$ provided the theorem would happen to axiomatize $U$, we simply replace it by a $D''$ strictly between
the original $D'$ and $D$. 
  
Here is one more corollary.

\begin{corollary}\label{tarskismurf}
Consider any finitely axiomatized, sequential theory $A$ in signature $\Theta$. Suppose that
for some of class of $\Theta$-sentences $\Omega$ we have a definable predicate {\sf TRUE}
such that, for any $\Omega$-sentence $B$, we have $A \vdash B \iff {\sf TRUE}(\gnum B)$. 
Let $X$ be any recursively enumerable set of $\Omega$-sentences. Then there is a finite extension $A^+$ of $A+X$ such that
$A^+$ is $\Omega$-conservative over $A+X$.
\end{corollary}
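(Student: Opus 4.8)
The plan is to reduce the corollary to the conservativity Theorem~\ref{conservatievesmurf} by using the predicate {\sf TRUE} to collapse the (possibly unbounded) complexity of the $\Omega$-sentences to a single fixed bound. The crucial observation is that, over $A$, every $\Omega$-sentence $B$ is provably equivalent to ${\sf TRUE}(\gnum B)$, and all of these latter sentences have one and the same $\rho_0$-complexity: writing $m_0$ for the $\rho_0$-complexity of ${\sf TRUE}(\gnum B)$, this number is independent of $B$, being $\rho_0({\sf TRUE})$ plus the fixed overhead incurred by eliminating the numeral term $\gnum{B}$ (recall that term elimination of a numeral raises $\rho_0$ by just $1$).

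First I would re-axiomatize $A+X$. Since $X$ consists of $\Omega$-sentences and $A \vdash B \iff {\sf TRUE}(\gnum B)$ for each such $B$, the theory $A+X$ has exactly the same deductive closure as $U := A + \verz{{\sf TRUE}(\gnum B) \mid B \in X}$. Now $U$ is sequential, since it extends the sequential theory $A$ with the same sequence scheme; it is recursively enumerable, since $X$ is r.e.\ and $B \mapsto {\sf TRUE}(\gnum B)$ is computable; and it is restricted, since every one of its new axioms lies in $\Delta^\ast_{m_0}$. Hence $U$ satisfies the hypotheses of Theorem~\ref{conservatievesmurf}.

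Next I would apply Theorem~\ref{conservatievesmurf} to $U$ with the number $m_0$, obtaining a finitely axiomatized sequential theory $A^+$ in signature $\Theta$ that extends $U$ and is $\Delta^\ast_{m_0}$-conservative over $U$. Since $A^+$ is finitely axiomatized and contains $A+X$, it is a finite extension of $A+X$ (adjoin to $A+X$ the conjunction of the axioms of $A^+$). It then remains only to verify $\Omega$-conservativity. Let $C$ be any $\Omega$-sentence with $A^+ \vdash C$. Because $A^+$ extends $A$, the equivalence $C \iff {\sf TRUE}(\gnum C)$ is available, so $A^+ \vdash {\sf TRUE}(\gnum C)$. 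As ${\sf TRUE}(\gnum C) \in \Delta^\ast_{m_0}$, the $\Delta^\ast_{m_0}$-conservativity of $A^+$ over $U$ yields $U \vdash {\sf TRUE}(\gnum C)$; and since $U$ also extends $A$, the other direction of the same equivalence gives $U \vdash C$, i.e.\ $A+X \vdash C$, as required.

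The work is short because the heavy lifting is already done in Theorem~\ref{conservatievesmurf}; the one genuinely delicate point, which I would take care to spell out, is the \emph{uniformity} of the bound $m_0$. It is precisely the fact that ${\sf TRUE}(\gnum B)$ has a $B$-independent $\rho_0$-complexity that allows a single application of a $\Delta^\ast_m$-conservativity theorem to deliver conservativity for the whole, complexity-unbounded class $\Omega$. I would also note explicitly that the equivalences $B \iff {\sf TRUE}(\gnum B)$, being theorems of $A$, persist in every extension of $A$ (here both $A^+$ and $U$), which is what makes the two translations in the final step legitimate.
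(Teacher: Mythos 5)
Your proof is correct and follows essentially the same route as the paper's (which is a two-line application of Theorem~\ref{conservatievesmurf} with $m := \rho_0({\sf TRUE}(x))+1$); you have simply made explicit the re-axiomatization of $A+X$ via the uniformly bounded sentences ${\sf TRUE}(\gnum B)$ and the back-and-forth translation of $\Omega$-conservativity through $\Delta^\ast_{m_0}$-conservativity, all of which the paper leaves implicit.
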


\begin{proof}
Clearly $A + \verz{{\sf TRUE}(B) \mid B \in X}$ is restricted. We apply Theorem~\ref{conservatievesmurf}
taking $m := \rho_0({\sf TRUE}(x)) +1$. 
\end{proof}

\noindent  We give two examples of applications of the result.

\begin{example}
Let $U$ be any recursively enumerable extension of {\sf PA}. Then, there is a finite extension $A$ of ${\sf ACA}_0$ such that
the arithmetical consequences of $A$ are precisely the consequences of $U$. Similarly for the pair {\sf ZF} and {\sf GB}.

This result was previously proven by Robert van Wesep in his paper \cite{wese:sati13}. 
\end{example}

\begin{example}
By Parsons' result $\mathrm{I}\Sigma_1$ is $\Pi^0_2$-conservative over {\sf PRA}.\footnote{We consider a version
of {\sf PRA} in the original arithmetical language here.} Since, over {\sf EA}, we have $\Sigma_m$-truth predicates.
It follows that, for every $m$, we have an finite extension $A_m$ of {\sf PRA} that is $\Sigma_m$-conservative.  
We can easily arrange that these extensions become strictly weaker when $m$ grows.
\end{example}

\noindent
We refer the reader to \cite{viss:ques17} for a number of results in the same niche using a different methodogy.

\section{Standardness Regained}\label{stare}
Finiteness is Predicate Logic's nemesis. However hard Predicate Logic
tries, there is no way it can pin down the set of standard numbers. What happens
when we invert the question? \emph{Are there theories that interpret some basic arithmetic
that do not have models in which the standard numbers are interpretable?}
The answer is a resounding \emph{yes}. For example, ${\sf PA} + {\sf incon}({\sf PA})$
 has no models in which the standard numbers are interpretable.
More generally, consider \emph{any} recursively enumerable consistent 
theory $U$ with signature $\Theta$. Suppose the signature of arithmetic is $\Xi$. Then,
the theory $U +\verz{(\bigwedge ({\sf S}^1_2)^\tau \to {\sf incon}^\tau(U)) \mid  \tau:\Xi \to \Theta}$ is consistent
and does not have any models that have an internal model isomorphic to the standard numbers.\footnote{We assume that the axioms
of identity are part of the axiomatization of ${\sf S}^1_2$.}

The situation changes when we put some restriction on the complexity of the axioms of the theory.
The classical work concerning this idea  the beautiful paper  by Kenneth McAloon \cite{mcal:comp78}. McAloon 
shows that arithmetical theories with axioms of restricted complexity \emph{that are consistent with
{\sf PA}} always have a model in which the standard integers are definable.
McAloon's work was further extended by Zofia Adamowicz, Andr\'es Cord\'on-Franco and Felix Lara-Mart\'{\i}n. See \cite{adam:exis16}.

Our aim in this paper is to find an analogue of McAloon's Theorem that works for
all sequential theories. We prove a result that is more general in scope but, at the same time, substantially weaker in its statement.
We show that any consistent restricted sequential theory $U$ has a model
in which the \emph{intersection of all definable cuts} is isomorphic to the
standard natural numbers. This intersection is not generally itself definable in the model.
We will show that \emph{the intersection of all definable cuts} is a good notion that, for sequential theories,
is not dependent on the original choice of the interpretation of number theory. 
 
\subsection{The Intersection of all Definable Cuts}
In this subsection, we establish that \emph{the intersection of all definable cuts} is a good notion.

Consider a sequential model $\mathcal M$. 
Let $\mathcal N$ be a $\mathcal M$-internal model satisfying ${\sf S}^1_2$. 
Let  $\mathcal J_{\mathcal M,\mathcal N}$ be the intersection of all
$\mathcal M$-definable $\mathcal N$-cuts in $\mathcal M$. 

Now consider two  $\mathcal M$-internal models $\mathcal N$ and $\mathcal N'$ satisfying ${\sf S}^1_2$. 
By a result of Pavel Pudl\'ak (\cite{pudl:cuts85}), there is an $\mathcal M$-definable isomorphism $\mathcal F$ between an 
$\mathcal M$-definable cut $\mathcal I$ of $\mathcal N$ and an $\mathcal M$-definable cut $\mathcal I'$ of 
$\mathcal N'$.
It is easily seen that $\mathcal F$ restricted to $\mathcal J_{\mathcal M, \mathcal  N}$ is an isomorphism between 
$\mathcal J_{\mathcal M, \mathcal  N}$ and $\mathcal J_{\mathcal M, \mathcal  N'}$.

Suppose $\mathcal G$ and $\mathcal H$ are two $\mathcal M$-definable partial functions between $\mathcal N$ and
$\mathcal N'$
 such that the restrictions of
$\mathcal G$ and $\mathcal H$ to $\mathcal J_{\mathcal M, \mathcal  N}$ 
commute with zero and successor. Then it is easy to see that $\mathcal G$ and $\mathcal H$
are extensionally equal isomorphisms between
$\mathcal J_{\mathcal M, \mathcal  N}$ and $\mathcal J_{\mathcal M, \mathcal  N'}$.
Thus, in a sense, there is a unique definable isomorphism 
 between 
$\mathcal J_{\mathcal M, \mathcal  N}$ and $\mathcal J_{\mathcal M, \mathcal  N'}$.

The above observations justify the notation $\mathcal J_{\mathcal M}$ for
$\mathcal J_{\mathcal M, \mathcal  N}$ modulo isomorphism.

We note that, in the definition of $\mathcal J_{\mathcal M}$ it does not matter 
 whether we allow parameters in the definition of  the cuts.
Every  cut  with parameters  has a parameter-free shortening. Suppose $\mathcal I$ is an $\mathcal N$-cut
that is given by $I(x,\vec b\,)$.
Then,
\[  I^\ast (x) := \forall \vec y\; ({\sf cut}(\verz{ z \mid I(z,\vec y\,)}) \to  I(x,\vec y\,))\]  defines a cut $\mathcal I^\ast$ that is a shortening of 
$\mathcal I$.

\begin{remark}
What happens if the sequence scheme $\mathcal S$ itself involves parameters? In \cite{viss:what13}, it is shown that
these parameters can eliminated  by raising the dimension of the interpretation. Since the standard development of an interpretation of
${\sf S}^1_2$ in a sequential theory does not involve parameters, it follows that even in a sequential theory with a sequence scheme
involving parameters, there is an interpretation of ${\sf S}^1_2$ that is parameter-free! However, the cost of this fact is that
there may be no such interpretation that is one-dimensional.
\end{remark}

\noindent
Before going on with the main line of our story, I want to give some basic facts about $\mathcal J_{\mathcal M}$
in order to place it in perspective.

\begin{theorem}
Suppose $\mathcal M$ is a sequential model.
We have:
 \[\mathcal J_{\mathcal M} \models {\sf EA} + \mathrm{B}\Sigma_1 + \verz{{\sf con}_n(A) \mid \mathcal M \models A}.\]
\end{theorem}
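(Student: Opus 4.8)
The plan is to verify the three blocks ${\sf EA}$, $\mathrm{B}\Sigma_1$ and the consistency statements separately, viewing $\mathcal J := \mathcal J_{\mathcal M}$ as a cut of a fixed $\mathcal M$-internal model $\mathcal N \models {\sf S}^1_2$ and exploiting throughout that $\mathcal J$ is the intersection of \emph{all} $\mathcal M$-definable $\mathcal N$-cuts. First the soft facts: an intersection of downward-closed, successor-closed sets is again such, so $\mathcal J$ is a cut. By standard cut-shortening (the same bootstrapping that produces the cuts $\Im_n$) every definable cut $I$ shortens to a definable $I' \subseteq I$ closed under $+$, $\times$ and $x \mapsto 2^x$; if $a,b \in \mathcal J$ then $a,b \in I'$ for each such $I'$, whence $a+b,\,a\cdot b,\,2^a \in I' \subseteq I$, and as $I$ was arbitrary these values lie in $\mathcal J$. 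So $\mathcal J$ is closed under $+,\times,\exp$, and being a cut it inherits every $\Pi_1$-consequence of ${\sf S}^1_2$ by downward absoluteness of bounded formulas; in particular $\mathcal J \models {\sf PA}^-$ plus totality of exponentiation. For $\Delta_0$-induction I would fix, again by shortening, a definable $\mathcal I_0 \subseteq \mathcal N$ with $\mathcal I_0 \models {\sf EA}$, so $\mathcal J \subseteq \mathcal I_0$. If $\phi \in \Delta_0$ has parameters in $\mathcal J$, with $\mathcal J \models \phi(0) \wedge \forall x(\phi(x)\to\phi(x+1))$ but $\phi(b)$ failing for some $b \in \mathcal J$, then by $\Delta_0$-least-number in $\mathcal I_0$ there is a least $c \leq b$ with $\neg\phi(c)$; downward closure gives $c \in \mathcal J$, and $c \neq 0$, so $c-1 \in \mathcal J$ carries $\phi(c-1)$ and the inductive step forces $\phi(c)$, a contradiction. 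Hence $\mathcal J \models I\Delta_0$, so $\mathcal J \models {\sf EA}$.

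The collection scheme $\mathrm{B}\Sigma_1$ is where the intersection structure is used essentially. Assume $a \in \mathcal J$ and $\mathcal J \models \forall x < a\,\exists y\,\phi(x,y)$ with $\phi \in \Delta_0$. Since $a \in \mathcal J$ and $\mathcal J$ is an initial segment, $[0,a) \subseteq \mathcal J$, so the least-witness function $f(x) := \mu y\,\phi(x,y)$ is total on $[0,a)$ and takes all its values in $\mathcal J$. The decisive point is that $\mathcal J$ is bounded in $\mathcal N$: in the nondegenerate case there is at least one proper definable cut (e.g.\ $\Im_n$), hence some $d \in \mathcal N$ lying above all of $\mathcal J$, and then $f(x) < d$ for every $x < a$. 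Working in $\mathcal I_0 \models {\sf EA}$, the $\Delta_0$-predicate $\forall x < a\,\exists y \leq m\,\phi(x,y)$ holds at $m = d-1$, so by $\Delta_0$-least-number it has a least solution $m^\ast < d$, necessarily attained as $m^\ast = f(x^\ast)$ for some $x^\ast < a$; thus $m^\ast \in \mathcal J$ bounds every $f(x)$, and $m^\ast+1 \in \mathcal J$ witnesses collection. (If no proper definable cut exists then $\mathcal N$ is standard, $\mathcal J = \mathbb N$, and $\mathrm{B}\Sigma_1$ is trivial.)

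Finally, the consistency statements are exactly where the reflection engine of Theorem~\ref{summumsmurf} is cashed in. Fix $A$ with $\mathcal M \models A$ and let $n$ be the complexity bound attached to $A$ and $\mathcal S$. Suppose toward a contradiction that $\mathcal J \models \neg\,{\sf con}_n(A)$, i.e.\ some $p \in \mathcal J$ codes an $n$-proof of $\bot$ from the single axiom $A$. Since $\Im_n = (\Im_n(\Theta))^{\mathcal S_\Theta}$ is a definable cut, $\mathcal J \subseteq \Im_n$, so $p \in \Im_n$ and therefore $[\,\verz{A} \vdash^{\Im_n}_n \bot\,]$ holds in $\mathcal M$. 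Theorem~\ref{summumsmurf} then yields $\verz{A} \models_n \bot$, that is $\forall\alpha\,({\sf sat}_n(+,\alpha,A) \to {\sf sat}_n(+,\alpha,\bot))$. As $\bot$ is never satisfied, this forces $A$ to be nowhere satisfied; but $A$ is a standard sentence with $\gnum{A} \in \mathcal J \subseteq \Im_n$, so the finitely many commutation steps of Theorem~\ref{boeksmurf} identify ${\sf sat}_n(+,\alpha,A)$ with the genuine truth of $A$ in $\mathcal M$, which holds. This contradiction gives $\mathcal J \models {\sf con}_n(A)$ for every such $A$.

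I expect the main obstacle to be the bookkeeping in this last step rather than any deep difficulty: one must make sure the inconsistency witness $p$ genuinely falls inside the reflective cut $\Im_n$ (which is exactly what $\mathcal J \subseteq \Im_n$ secures) and that the internal satisfaction predicate is \emph{provably} correct on the standard sentence $A$, so that ``$A$ nowhere satisfied'' really contradicts $\mathcal M \models A$. Everything else reduces to routine cut-shortening together with the two least-number arguments above.
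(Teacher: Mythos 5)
Your decomposition matches the paper's, and two of the three blocks are essentially sound. The $\mathrm{B}\Sigma_1$ argument is the standard Paris--Kirby proof that a bounded cut of a model of $\mathrm{I}\Delta_0$ satisfies collection (where the paper simply quotes downward persistence of $\mathrm{I}\Delta_0+\Omega_1+\mathrm{B}\Sigma_1$ to a Pudl\'ak cut), and your treatment of the ${\sf con}_n(A)$ block correctly unfolds the paper's one-line appeal to $\mathcal J_{\mathcal M}\subseteq \Im_n$, the reflection of Theorem~\ref{summumsmurf}, and downward persistence of $\Pi_1$-sentences.

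The genuine gap is the claim that ``every definable cut $I$ shortens to a definable $I'\subseteq I$ closed under $+$, $\times$ and $x\mapsto 2^x$,'' which you present as the same bootstrapping that produces the $\Im_n$. It is not: the $\Im_n$ are only closed under $0,{\sf S},+,\times$ and $\omega_1$, and Solovay-style shortening provably cannot yield a definable cut closed under full exponentiation in a general sequential model --- otherwise ${\sf EA}$ would be interpretable on a definable cut of ${\sf S}^1_2$, which is known to fail; the gap between $\omega_1$-closure and $\exp$-closure is precisely what makes restricted provability and the whole $\Im_n$ machinery necessary. So the step $2^a\in I'\subseteq I$ is unjustified. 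The correct argument, which is the one the paper gives, uses a \emph{different} auxiliary cut for each target cut: given $I$ (first shortened to be closed under $+$), let $I'$ be a definable shortening of $\verz{b\mid 2^b\mbox{ exists and } 2^b\in I}$; since $a\in\mathcal J_{\mathcal M}$ lies in \emph{every} definable cut, $a\in I'$, hence $2^a\in I$; as $I$ was arbitrary, $2^a\in\mathcal J_{\mathcal M}$. Note that here $2^a$ lands in $I$, not in $I'$, and no single exponentially closed cut is ever required. A second, smaller error: you assert that if there is no proper definable cut then $\mathcal N$ is standard. This is false --- for a nonstandard $\mathcal M\models{\sf PA}$ every definable cut is improper and $\mathcal J_{\mathcal M}\cong\mathcal M$, as the paper itself remarks --- so your fallback for the degenerate case of the $\mathrm{B}\Sigma_1$ argument does not work as stated. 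The conclusion still holds there, but for a different reason: in that case the Pudl\'ak cut witnessing $\mathrm{I}\Delta_0+\Omega_1+\mathrm{B}\Sigma_1$ and the cut on which $2^x$ is defined are themselves improper, so $\mathcal N=\mathcal J_{\mathcal M}$ already satisfies what is needed.
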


\begin{proof}
Suppose $N$ is, as before,  the interpretation given by the sequence scheme $\mathcal S$ that defines an internal ${\sf S}^1_2$-model of $\mathcal M$. 
We will consider $\mathcal J_{\mathcal M}$ as the intersection of all $N$-cuts. We again write $\Im_n$ for $\Im_n^{\mathcal S}(\Theta)$.

Since there is an $N$-cut $J$ such that on $J$ we have
$W := \mathrm{I}\Delta_0 + \Omega_1 + \mathrm{B}\Sigma_1$ and since this property is downwards
preserved, we find that  $\mathcal J_{\mathcal M} \models W$. 

Suppose $\mathcal M \models A$. 
Without loss of generality, we can assume that  $n \geq \rho_0(A)$. Clearly, we have $\mathcal M \models {\sf con}^{\Im_n}_n(A)$.
Hence, by downwards persistence,  $\mathcal J_{\mathcal M} \models {\sf con}_n(A)$.

Finally, consider any $a$ in  $\mathcal J_{\mathcal M}$. 
Consider any $N$-cut $I$. There is an $N$-cut $I'$ such that for every $b$ in $I'$, we have $2^b$ is in $I$.
Since $a$ is in $I'$, it follows that $2^a$ is in $I$. Since $I$ was arbitrary, we have $2^a$ is in $\mathcal J_{\mathcal M}$.
\end{proof}

\noindent 
Let $\mathfrak J_U := {\sf Th}(\verz{\mathcal J_{\mathcal M} \mid \mathcal M \models U})$. Then, we have:

\begin{corollary}
$\mathfrak J_U \vdash  {\sf EA} + \mathrm{B}\Sigma_1 + \mho_U$.
\end{corollary}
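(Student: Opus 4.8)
The plan is to read the corollary off directly from the preceding theorem, exploiting that $\mathfrak J_U$ was defined as ${\sf Th}(\verz{\mathcal J_{\mathcal M} \mid \mathcal M \models U})$. Hence $\mathfrak J_U \vdash \varphi$ holds precisely when $\mathcal J_{\mathcal M} \models \varphi$ for \emph{every} $\mathcal M \models U$, and the whole argument reduces to checking that each component on the right-hand side holds uniformly across this class of structures.

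For the ${\sf EA} + \mathrm{B}\Sigma_1$ part I would simply observe that every model of $U$ is sequential, since $U$ is sequential; so the preceding theorem applies to each such $\mathcal M$ and yields $\mathcal J_{\mathcal M} \models {\sf EA} + \mathrm{B}\Sigma_1$. Being true in every member of the class, these axioms are theorems of $\mathfrak J_U$.

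For the $\mho_U$ part I would fix a sentence $A$ with $U \vdash A$, so that the relevant ${\sf con}_n(A)$ belongs to $\mho_U$ by its definition, and then fix an arbitrary $\mathcal M \models U$. By soundness $\mathcal M \models A$, so $A$ lies in the set $\verz{A \mid \mathcal M \models A}$ governing the theorem's third conjunct, and the theorem delivers $\mathcal J_{\mathcal M} \models {\sf con}_n(A)$. Since $\mathcal M$ was arbitrary, ${\sf con}_n(A)$ holds throughout the class and is therefore a theorem of $\mathfrak J_U$; letting $A$ range over all $U$-theorems gives $\mathfrak J_U \vdash \mho_U$.

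The one point that deserves care — and the reason the corollary invokes $U \vdash A$ rather than mere consistency with $U$ — is \emph{uniformity}: the theorem only secures ${\sf con}_n(A)$ inside $\mathcal J_{\mathcal M}$ for those $A$ true in that particular $\mathcal M$, and distinct models of $U$ verify distinct consistent-but-unprovable sentences. Restricting attention to $U$-provable $A$ (which, by soundness, are true in all models of $U$) is exactly what makes the conclusion survive the passage to the whole class. Beyond this bookkeeping there is no genuine obstacle; the result is a corollary in the strict sense.
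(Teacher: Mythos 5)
Your argument is correct and is exactly the intended one-step derivation: the paper states this as an immediate corollary of the preceding theorem (offering no separate proof), obtained by intersecting over all $\mathcal M \models U$ and using soundness to pass from $U\vdash A$ to $\mathcal M \models A$. The only point worth making explicit is that the axioms of $\mho_U$ beyond ${\sf S}^1_2$ are the sentences ${\sf con}_n(\thres{U}{n})$, so one instantiates your $A$ as the conjunction of the finitely many $U$-axioms with code $\leq n$ (a $U$-theorem), which places each such axiom inside the set $\verz{{\sf con}_n(A)\mid \mathcal M\models A}$ delivered by the theorem.
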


\noindent
The next theorem is a kind of overspill principle.

\begin{theorem}
Suppose $\mathcal M$ is a sequential model and $M$ defines an internal ${\sf S}^1_2$-model of $\mathcal M$. 
We treat $\mathcal J_{\mathcal M}$ as the intersection of all $M$-cuts.
Let  $B(x)$ be any formula. We have:
\begin{center}
\textup(for all $b$ in $\mathcal J_{\mathcal M}$,   $\mathcal M \models B(b)$\textup) iff, for some $M$-cut $J$,
$\mathcal M \models \forall x\in J\, B(x)$.
\end{center}
\end{theorem}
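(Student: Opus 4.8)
The plan is to prove the two directions separately, with essentially all the content in the left-to-right implication. For the direction ($\Leftarrow$), I would simply invoke the definition: $\mathcal J_{\mathcal M}$ is contained in \emph{every} definable $M$-cut, in particular in the witnessing cut $J$. So if $\mathcal M \models \forall x\in J\, B(x)$, then a fortiori $\mathcal M \models B(b)$ for every $b$ in $\mathcal J_{\mathcal M}$, and nothing more is needed.

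For the direction ($\Rightarrow$), I would first pass to the downward closure $C(x) := \forall y \le x\, B(y)$. Since $\mathcal J_{\mathcal M}$ is an intersection of $M$-cuts it is itself downward closed, so the hypothesis ``$B$ holds throughout $\mathcal J_{\mathcal M}$'' is equivalent to ``$C$ holds throughout $\mathcal J_{\mathcal M}$''. As $C(x) \to B(x)$, any cut on which $C$ holds is a cut on which $B$ holds, so it suffices to produce a definable $M$-cut contained in $\{x : C(x)\}$. I would then split on whether the definable initial segment $\{x : C(x)\}$ is closed under successor in $\mathcal M$.

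In the first case, $\mathcal M \models \forall x\,(C(x)\to C(x+1))$. Here I note that the standard numbers, hence $0$, lie in $\mathcal J_{\mathcal M}$ (every $M$-cut contains $0$ and is closed under successor), so $C(0)$ holds. Thus $\{x : C(x)\}$ contains $0$, is downward closed, and is closed under successor, i.e.\ it is already a cut; shortening it to secure whatever further closure under $+$, $\times$, $\omega_1$ is built into the notion of $M$-cut (the shortening remaining inside $\{x:C(x)\}$ by the standard shortening construction, cf.\ \cite{pudl:cuts85}) yields the desired $J$, and $\mathcal M \models \forall x\in J\, B(x)$. In the second case, $\mathcal M \models \exists a\,(C(a)\wedge \neg\, C(a+1))$; I fix such an $a$. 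From $C(a)$ and $\neg\, C(a+1)$ one reads off $\neg\, B(a+1)$, so by the hypothesis $a+1$ cannot belong to $\mathcal J_{\mathcal M}$. Since $\mathcal J_{\mathcal M}$ is precisely the intersection of all definable $M$-cuts, there is a definable $M$-cut $J$ with $a+1 \notin J$; being downward closed, $J \subseteq \{x : x\le a\}$. But $C(a)$ says exactly that $B$ holds on $\{x : x\le a\}$, so $\mathcal M \models \forall x\in J\, B(x)$.

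The only genuinely non-routine step is the second case, and it is there that the defining property of $\mathcal J_{\mathcal M}$ carries the argument: the failure of $B$ at the ``gap point'' $a+1$ forces $a+1$ out of the intersection, and the intersection ranging over \emph{all} cuts is exactly what allows us to extract a single definable cut lying below $a$. This is the overspill mechanism in miniature. I would expect the bookkeeping around the precise meaning of ``$M$-cut'' (whether a shortening is needed in the first case) to be the only point demanding care; everything else follows immediately from the downward closure of $\mathcal J_{\mathcal M}$.
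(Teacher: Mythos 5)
Your proposal is correct and follows essentially the same route as the paper: the paper likewise forms the downward closure $X:=\verz{x\in\delta_M\mid \forall y<^M x\, B(y)}$, shortens it to a cut when it is closed under successor, and otherwise locates a failure point $c$ which, not lying in $\mathcal J_{\mathcal M}$, must have a definable cut below it. The only differences are cosmetic (your $\le$ versus the paper's strict $<$, which makes the membership of $0$ vacuous rather than requiring the remark that $0\in\mathcal J_{\mathcal M}$).
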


\begin{proof}
The right-to-left direction is trivial. 
Suppose for all $b$ in $\mathcal J_{\mathcal M}$, we have $\mathcal M \models B(b)$. 
Let $X :=
\verz{x \in \delta_M \mid \forall y <^M x\, B(y)}$. If $X$ is closed under successor, then we
can shorten $X$ to an $M$-cut $J$ for which we have $\forall x\in J\, B(x)$, and we are done.
Otherwise, there is a $c$ such that $\neg\, B(c) \wedge \forall y<c\, B(y)$.
Since $c$ cannot be in $\mathcal J_{\mathcal M}$, it follows that there
is a cut $J$ below $c$.
\end{proof}

\noindent
Our overspill principle immediately gives:

\begin{theorem}
Suppose $\mathcal M$ is a sequential model and $M$ defines an internal ${\sf S}^1_2$-model of $\mathcal M$. 
Let  $P$ be a $\Pi_1$-formula. We have:
\begin{center}
$\mathcal J_{\mathcal M} \models P$ iff, for some $M$-cut $J$,
$\mathcal M \models P^J$.
\end{center}
\end{theorem}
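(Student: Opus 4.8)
The plan is to read off the theorem as a direct corollary of the overspill principle stated just above, once we account for the absoluteness of bounded formulas along cuts. Write $P$ in the form $\forall x\, B_0(x)$, where $B_0$ is $\Delta_0$, i.e.\ all its quantifiers are bounded. (A leading block $\forall \vec x\, B_0(\vec x)$ of universal quantifiers can be contracted to a single $\forall x$ via the sequence coding available in ${\sf S}^1_2$, the decoding being $\Delta_0$, so this normal form is no loss.) Under this form, $\mathcal J_{\mathcal M} \models P$ holds exactly when $\mathcal J_{\mathcal M} \models B_0(b)$ for every $b$ in $\mathcal J_{\mathcal M}$.

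The first substantive step is to observe that $B_0$ is absolute between the substructure $\mathcal J_{\mathcal M}$ and the ambient internal $M$-model of $\mathcal M$. Since $\mathcal J_{\mathcal M}$ is an intersection of $M$-cuts, it is downward closed and closed under the arithmetical operations; indeed we already saw it is closed under $0$, ${\sf S}$, and even $a\mapsto 2^a$. Hence for $b$ in $\mathcal J_{\mathcal M}$ every bounding term occurring in $B_0(b)$ evaluates inside $\mathcal J_{\mathcal M}$, and every potential witness for or counterexample to a bounded quantifier lies below such a term, hence again inside $\mathcal J_{\mathcal M}$. Therefore $\mathcal J_{\mathcal M} \models B_0(b)$ iff $\mathcal M \models B_0(b)$. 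The very same argument, with an arbitrary $M$-cut $J$ in place of $\mathcal J_{\mathcal M}$, shows that the relativization $B_0^J(x)$ is equivalent over $\mathcal M$ to $B_0(x)$ whenever $x$ lies in $J$, and consequently that $P^J$ is equivalent over $\mathcal M$ to $\forall x\in J\, B_0(x)$.

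With these two absoluteness facts in hand the theorem is immediate. We have $\mathcal J_{\mathcal M} \models P$ iff $\mathcal M \models B_0(b)$ for all $b$ in $\mathcal J_{\mathcal M}$, which by the overspill principle applied to the formula $B_0$ holds iff $\mathcal M \models \forall x\in J\, B_0(x)$ for some $M$-cut $J$, which by the last sentence of the previous paragraph is in turn equivalent to $\mathcal M \models P^J$ for some $M$-cut $J$. This is exactly the asserted equivalence.

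I expect the only point needing genuine care to be the absoluteness bookkeeping of the second paragraph, specifically the verification that the bounding terms of $B_0$ stay inside the cut so that $B_0^J$ and $B_0$ coincide on $J$; this is precisely where the closure properties guaranteed by ${\sf S}^1_2$ are used, and where the restriction to $\Pi_1$ (rather than higher complexity) is essential, since an unbounded inner quantifier would fail to be absolute. Everything else is a routine translation between the two sides of the overspill equivalence.
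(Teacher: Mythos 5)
Your proof is correct and is exactly the argument the paper intends: the paper derives this theorem as an immediate corollary of the overspill principle, and your two absoluteness observations are precisely the routine bookkeeping it suppresses. The only point worth tightening is that an arbitrary $M$-cut need not be closed under the term functions of ${\sf S}^1_2$ (only under $0$, successor, and downward closure), so for the claimed equivalence of $B_0^J$ with $B_0$ on $J$ one should first shorten $J$ to a cut closed under $+$, $\times$ and $\omega_1$ --- which is harmless, since $\Pi_1$ statements persist to shorter cuts and the theorem only asserts the existence of \emph{some} cut.
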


\begin{corollary}
Let  $P$ be a $\Pi_1$-formula. Then, $\mathfrak J_U \vdash P$ iff, for some $M:{\sf S}^1_2 \lhd U$, we have
$U \vdash P^M$.
\end{corollary}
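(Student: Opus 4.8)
The plan is to read the corollary off the theorem immediately preceding it, which states for a $\Pi_1$-formula $P$ that $\mathcal J_{\mathcal M}\models P$ iff $\mathcal M\models P^J$ for some $M$-cut $J$. Throughout I fix the standard interpretation $N:{\sf S}^1_2\lhd U$ coming from the sequence scheme $\mathcal S$, so that for \emph{every} $\mathcal M\models U$ the same syntactic interpretation $N$ defines an internal ${\sf S}^1_2$-model, and I take $\mathcal J_{\mathcal M}$ to be the intersection of all $N$-cuts. The direction from right to left is then routine: if $M:{\sf S}^1_2\lhd U$ satisfies $U\vdash P^M$, fix any $\mathcal M\models U$; the internal model $\widetilde M(\mathcal M)$ satisfies $P$, and since $\mathcal J_{\mathcal M}$ is a cut of that internal model and $P$ is $\Pi_1$, downward persistence of $\Pi_1$-formulas to cuts yields $\mathcal J_{\mathcal M}\models P$. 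As $\mathcal M$ was arbitrary, $\mathfrak J_U\vdash P$. (Equivalently, apply the preceding theorem with $J$ the full internal domain.)

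For the direction from left to right I use compactness. Assume $\mathfrak J_U\vdash P$, i.e. $\mathcal J_{\mathcal M}\models P$ for every $\mathcal M\models U$. By the preceding theorem, for each such $\mathcal M$ there is an $N$-cut $J$ with $\mathcal M\models P^J$. Replacing $J$ by its parameter-free shortening $J^\ast$ (as in the construction $I^\ast$ recalled above, which is again a cut contained in $J$, and to which $P^J$ descends in $\mathcal M$ because $P$ is $\Pi_1$), I may assume $J$ is defined by one of the parameter-free cut formulas $\verz{J_i\mid i\in\omega}$; these are countably many, as $U$ has finite signature. Thus every model of $U$ satisfies some $P^{J_i}$, so $U\cup\verz{\neg\,P^{J_i}\mid i\in\omega}$ has no model, and by compactness there is a finite $F\subseteq\omega$ with $U\vdash\bigvee_{i\in F}P^{J_i}$.

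The key step is to collapse this finite disjunction to a single interpretation, using the closure of cuts under finite intersection. Put $J:=\bigcap_{i\in F}J_i$, which is again an $N$-cut, and note $U\vdash J(x)\to J_i(x)$ for each $i\in F$. Since $J\subseteq J_i$ and the matrix of $P$ is bounded (hence absolute between the two cuts), the implication $P^{J_i}\to P^{J}$ holds logically, so $U\vdash\bigvee_{i\in F}P^{J_i}\to P^{J}$ and therefore $U\vdash P^{J}$. Finally, shorten $J$ to a subcut $J'$ on which $N$ interprets ${\sf S}^1_2$ (possible by the usual bootstrap, and harmless since $J'\subseteq J$ again gives $U\vdash P^{J}\to P^{J'}$ by the same $\Pi_1$ argument). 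Then $M:=N{\restriction}J'$ is an interpretation ${\sf S}^1_2\lhd U$ with $U\vdash P^{M}$, as required.

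The substantive point is entirely in the left-to-right direction: the preceding overspill theorem only produces a cut \emph{depending on the model}, and the work is to make it uniform. This is exactly what compactness plus closure of cuts under finite intersection accomplishes, with $\Pi_1$-downward-persistence invoked at each cut passage (the parameter-free shortening, the finite intersection, and the shortening to an ${\sf S}^1_2$-cut). No appeal to restrictedness of $U$ is needed here, only its sequentiality.
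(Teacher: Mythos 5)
Your argument is correct and is precisely the ``simple compactness argument'' that the paper invokes without elaboration: the paper's proof consists of that single sentence, and your left-to-right direction (parameter-free shortening, compactness over the countably many candidate cut formulas, collapsing the finite disjunction via intersection of cuts and $\Pi_1$-persistence) fills in exactly the intended details. No gaps; the only point worth keeping explicit, which you do address, is that each candidate cut formula must \emph{provably} define a cut so that the finite intersection and the final shortening to an ${\sf S}^1_2$-cut go through uniformly in $U$.
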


\begin{proof}
The proof of the corollary is by a simple compactness argument. 
\end{proof}

\begin{question}
Any further information on $\mathfrak J_U$ would be interesting. For example, what are the possible complexities of
$\mathfrak J_U$ for recursively enumerable sequential theories $U$?
\end{question}

\subsection{$\omega$-models}
Before proceeding, we briefly reflect on the notion of {\omodel}. 
The common practice is to say, e.g.,  that $\mathcal M$ is an {\omodel} of {\sf ZF} if the von Neumann numbers
of $\mathcal M$ are (order-)isomorphic to $\omega$. Of course, there are other interpretations $M$ of arithmetic in
{\sf ZF}. However, we have the feature that if the $M$-numbers are isomorphic to $\omega$, then so are
the von Neumann numbers ---but not \emph{vice versa}. If we consider {\sf GB} in stead of {\sf ZF} we do not know
whether this feature is preserved. It is conceivable that, in some model, a definable cut of the von Neumann
numbers is isomorphic to $\omega$ and the von Neumann numbers are not. 

It seems to me that the proper codification of the common practice would be to say that 
an {\omodel} is not strictly a model but a pair $\tupel{\mathcal M,M}$ of a model and
an interpretation $M$ of a suitable arithmetic in $\mathcal M$ such that $\widetilde M(\mathcal M)$ is
isomorphic to $\omega$.   

 Of course there is the option of existentially quantifying out the choice of the interpretation of arithmetic.
Let's say that $\mathcal M$ is an {\pmodel}, if for some $M$, $\tupel{\mathcal M,M}$ is an
{\omodel}.

Finally, in the sequential case, there is a third option.
We define: a sequential model $\mathcal M$ is \emph{an {\qmodel}} if $\mathcal J_{\mathcal M}$ is isomorphic to the standard numbers.
({\sf i} stands for: intersection.) 
In other words, $\mathcal M$ is an {\qmodel} if, for some interpretation $M$ of ${\sf S}^1_2$,  for
 every non-standard element $a$, there is an $\mathcal M$-definable $M$-cut $I$ such that $I < a$.
 
 \medent
We have the following property of {\qmodel}s.

\begin{theorem}\label{graafsmurf}
Suppose $\mathcal M$ is a sequential {\qmodel} and $M:{\sf S}^1_2\lhd \mathcal M$. 
Let $X$ be a parametrically definable class of $M$-numbers. Suppose $\omega \subseteq X$.
\textup{(}We confuse the standard part of $M$ with $\omega$.\textup{)}
Then there is a $\mathcal M$-definable $M$-cut $J$ such that $J \subseteq X$.
\end{theorem}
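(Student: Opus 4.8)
The plan is to read the statement off the overspill principle established just above (the theorem asserting that, for any formula $B(x)$, we have $B$ holding on all of $\mathcal J_{\mathcal M}$ iff $B$ holds on some $M$-cut). The key observation is that, because $\mathcal M$ is an {\qmodel}, the intersection $\mathcal J_{\mathcal M}$ of all $\mathcal M$-definable $M$-cuts \emph{is} the standard part $\omega$ --- this is precisely the defining property of {\qmodel}s. Hence the hypothesis $\omega \subseteq X$ says exactly that every $b$ in $\mathcal J_{\mathcal M}$ lies in $X$; writing $B(x)$ for the (possibly parametric) formula defining $X$, this is the statement that $\mathcal M \models B(b)$ for all $b$ in $\mathcal J_{\mathcal M}$.

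The overspill theorem then immediately hands us an $M$-cut $J$ with $\mathcal M \models \forall x \in J\, B(x)$, which is precisely an $\mathcal M$-definable $M$-cut $J \subseteq X$, as desired. Since the overspill theorem is stated for an arbitrary formula $B$, the parameters occurring in the definition of $X$ cause no difficulty, and the $M$-cuts it produces are $\mathcal M$-definable by construction, so the conclusion matches the form required here.

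If one prefers a self-contained argument that re-runs the overspill reasoning directly, I would first pass to the downward closure $Y := \verz{x \in \delta_M \mid \forall y \leq^M x\; y \in X}$. This $Y$ is $\mathcal M$-definable with the same parameters, is downward closed, satisfies $Y \subseteq X$, and contains $\omega$ (any $y$ below a standard $x$ is standard, hence in $X$). If $Y$ is closed under successor it is already a cut inside $X$ and we take $J := Y$. Otherwise there is a witness $c \in Y$ with $c+1 \notin Y$; since every $y \leq^M c$ lies in $X$, this failure forces $c+1 \notin X$, so $c+1$ is non-standard. The {\qmodel} property then supplies an $\mathcal M$-definable $M$-cut $I$ with $I < c+1$, so every element of $I$ is $\leq^M c$ and therefore, using $c \in Y$, lies in $X$; we take $J := I$. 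I do not expect a genuine obstacle here: the whole content is the identification $\mathcal J_{\mathcal M} = \omega$ together with overspill. The only points needing care are the faithful translation of $\omega \subseteq X$ into the overspill hypothesis and, in the direct version, the passage to the downward closure $Y$, which is what lets a single witness $c$ pin down a non-standard element to which the {\qmodel} property can be applied.
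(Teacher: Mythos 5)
Your proposal is correct and matches the paper's own proof: the paper runs exactly your ``self-contained'' argument (pass to the downward closure $Y$ of $X$; if $Y$ is successor-closed, shorten it to a definable cut; otherwise the successor of the witness is nonstandard, and the {\qmodel} property supplies a definable cut below it, hence inside $X$). Your first route --- reading the statement off the previously proved overspill principle via the identification $\mathcal J_{\mathcal M}=\omega$ --- is an equally valid shortcut, since that principle's proof is the very same argument.
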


\begin{proof}
Suppose $\omega \subseteq X$.
Consider the class $Y:= \verz{a\in N \mid \forall b \leq a \, b \in X}$.
In case $Y$ is closed under successor we can shorten it to a definable cut, and we are done.
In case $Y$ is not closed under successor, there is an $a_0$ such that $\forall b \leq a_0\, b \in X$ but
${\sf S}a_0 \not \in X$. By our assumption $\omega < a_0$. Hence there must be a definable cut $I$
with $\omega \leq I < a_0$. So,  $I \subseteq X$.
\end{proof}

\subsection{The Main Result}
If we are content with the countable case, our main result is a simple application of
the Omitting Types Theorem. We first give this easier proof.  

\begin{theorem}\label{azrael}
Let $U$ be a consistent restricted sequential theory. Here $U$ may be of any complexity. We allow countably many constants in $U$.
Then, $U$ has a model $\mathcal M$ in which $\mathcal J_{\mathcal M}$ is isomorphic to the standard natural numbers.
\end{theorem}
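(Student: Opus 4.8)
The plan is to build the model via the Omitting Types Theorem, which is exactly why we confine ourselves to the countable case. First I would record that, by the shortening observation preceding this subsection, $\mathcal J_{\mathcal M}$ is already the intersection of the \emph{parameter-free} definable $N$-cuts, so it suffices to control those. Enumerate all one-variable formulas $C(y)$ of the (countable) language of $U$, and for each let $\iota_C(x)$ be the formula asserting ``if $\{y : C(y)\}$ is an $N$-cut, then $x$ lies in it''. Consider the type
\[ p(x) := \{\delta_N(x)\} \cup \{\underline n <^N x : n \in \omega\} \cup \{\iota_C(x) : C\}. \]
An element realizes $p$ precisely when it is a nonstandard $N$-number lying in every definable cut; hence a model omitting $p$ is exactly one in which $\mathcal J_{\mathcal M}$ consists of the standard numbers only. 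Thus the theorem reduces to showing that $p$ is non-isolated over $U$, after which the Omitting Types Theorem supplies the desired countable model.

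The heart of the matter is non-isolation, and here I would feed in the small-is-very-small principle. Suppose, towards a contradiction, that some formula $\varphi(x)$ isolates $p$: that is, $U \cup \{\exists x\, \varphi(x)\}$ is consistent and $U \vdash \forall x\,(\varphi(x) \to \psi(x))$ for every $\psi \in p$. Introduce a fresh domain constant $c$ and set $U^+ := U + \varphi(c)$, which is again a restricted sequential extension of $A_0$. Let $n$ be the bound that Theorem~\ref{grotesmurfres} attaches to $U^+$ and the witness formula $B_0(x) := (x =^N c)$. Since $\Im_n = \Im_n^{\mathcal S}(\Theta)$ is a parameter-free definable cut that does not mention $c$, the formula $\iota_{\Im_n}$ belongs to $p$, so isolation yields $U \vdash \forall x\,(\varphi(x) \to x \in \Im_n)$, whence $U^+ \vdash c \in \Im_n$, i.e.\ $U^+ \vdash \exists x \in \Im_n\, (x =^N c)$. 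By Theorem~\ref{grotesmurfres} there is a standard $k$ with $U^+ \vdash \exists x \leq^N \underline k\, (x =^N c)$, that is $U^+ \vdash c \leq^N \underline k$. But $\underline k <^N x$ is a member of $p$, so isolation also gives $U^+ \vdash \underline k <^N c$, contradicting the consistency of $U^+$ (the interpreted ${\sf S}^1_2$ proves linearity of $<^N$). Hence no $\varphi$ isolates $p$.

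With $p$ shown to be non-isolated, the Omitting Types Theorem produces a countable model $\mathcal M \models U$ omitting $p$; by the reading of $p$ above, $\mathcal J_{\mathcal M}$ is then isomorphic to the standard natural numbers, which proves the theorem. I expect the only delicate points to be bookkeeping rather than ideas: one must check that the reduction to parameter-free cuts is legitimate, so that the countably many $\iota_C$ genuinely pin down $\mathcal J_{\mathcal M}$; that $\Im_n$ is indeed available as one of the cut-formulas $C$ and, crucially, is free of the auxiliary constant $c$ --- which is exactly the feature delivered by the ``domain constants'' refinement underlying Theorem~\ref{grotesmurfres}; and that adjoining the single axiom $\varphi(c)$ keeps the theory restricted, so that the principle applies. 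The genuinely substantive input, the passage from ``lies in $\Im_n$'' to ``lies below a standard $k$'', is precisely the small-is-very-small principle already in hand.
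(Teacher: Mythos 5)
Your proof is correct and follows essentially the same route as the paper: omit a countable type expressing ``nonstandard $N$-number lying in every definable cut'', with non-isolation secured by applying the small-is-very-small principle to a fresh domain constant. The only (harmless) differences are that the paper's type uses just the specific cuts $\Im_n$ --- so no reduction to parameter-free cuts is needed to keep the type countable --- and that it invokes the contrapositive form (Theorem~\ref{ijdelesmurf}) rather than Theorem~\ref{grotesmurfres} directly.
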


\begin{proof}
We fix a sequence scheme $\mathcal S$ for $U$. We work with the interpretation $N$ of ${\sf S}^1_2$ provided by this scheme.
Suppose that in all countable 
$U$-models the type \[ \mathcal T(x) :=  \verz{ x \neq \underline n \mid  n\in \omega } \cup \verz{ x \in \Im_n \mid n \in \omega}\] is realized.
Then, by the Omitting Types Theorem, there is a formula $A(x)$, such that, for a fresh constant $c$, we have (i)
$U+A(c)$ is consistent and (ii) $U + A(c) \vdash c\neq \underline n$, for each $n\in \omega$,  and (iii)
$U+A(c) \vdash c \in \Im_n$, for each $n\in \omega$. 

We apply Theorem~\ref{ijdelesmurf} to (i) and (ii) obtaining that, for some $n^\ast$, the theory $U+A(c) + \forall x\in \Im_{n^\ast}\, c \neq x$ is
consistent. However, this directly contradicts (i) and (iii).

We may conclude that there is a countable model $\mathcal M$ in which $\mathcal T(x)$ is omitted. Clearly, this tells us that $\mathcal J_{\mathcal M}$ is
isomorphic to the standard numbers.
\end{proof}

\noindent We proceed to prove the stronger version of our theorem where the restriction to countability is lifted.
We first prove a Lemma.

\begin{lemma}\label{fitnesssmurf}
Let $\mathcal M$ be any sequential model of signature $\Theta$ with domain $M$.
Let $\mathcal S$ be a sequence scheme for $\mathcal M$.  As usual, $N$ is the interpretation of
${\sf S}^1_2$ given by the sequence scheme.

Let $k$ be any number. We note that the $\rho_0$-complexity of the axioms of ${\sf AS}^+$ is a fixed number, say $s$. So the sequentiality of
$\mathcal M$ is witnessed by the satisfaction of a sentence $D$ of complexity below $s+ \rho_0(\mathcal S)$. 
Let $n := {\sf max}(k,s+ \rho_0(\mathcal S),  1 + \constantref{celeven}, \rho_0(\mathcal S) + \constantref{celeven})$.

Then, $\mathcal M$ has a sequential $\Delta^\ast_k$-elementary extension $\mathcal K$ with sequence scheme
$\mathcal S$ such that  $\Im_{n}^{\mathcal K} \cap M = \omega$.
\end{lemma}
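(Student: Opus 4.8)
The plan is to realize $\mathcal K$ as a model of a carefully chosen theory in an expanded language: we adjoin one fresh domain constant $c_a$ for every $a\in M$, use the $\Delta^\ast_k$-elementary diagram to force $\Delta^\ast_k$-elementarity, add the single sentence $D$ to keep $\mathcal S$ a sequence scheme, and finally add axioms that push every non-numeral element of $M$ out of the cut $\Im_n$. Concretely, let $\mathcal C := \verz{c_a \mid a\in M}$ and let $U$ be the theory of signature $\Theta+\mathcal C$ consisting of (i) all $\Delta^\ast_k$-sentences over $\mathcal C$ true in $\mathcal M$ under $c_a\mapsto a$, together with (ii) the sentence $D$. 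Since $\Delta^\ast_k$ is closed under the propositional connectives, the diagram sentences have $\rho_0\leq k$, and $\rho_0(D) < s+\rho_0(\mathcal S)$, so $U$ is restricted, bounded by $m := {\sf max}(k, s+\rho_0(\mathcal S))$, and extends the finitely axiomatized sequential theory $A_0:=D$ with $\rho_0(A_0)\leq m$. I would then consider
\[ T := U + \verz{\, c_a \notin \Im_n \mid a\in M,\ a \neq \underline q^{\mathcal M}\text{ for all }q\in\omega \,}, \]
where $c_a\notin\Im_n$ abbreviates $\forall x\in\Im_n\,(x\neq c_a)$.

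The heart of the argument is the consistency of $T$, which I would obtain from compactness together with Theorem~\ref{ijdelesmurf}. Given finitely many non-numeral elements $a_1,\dots,a_j\in M$, set $C(x) := \bigwedge_{i\leq j}(x\neq c_{a_i})$. Because $\rho_0$ of a conjunction is the maximum of the $\rho_0$ of its conjuncts, $\rho_0(C)=1$ \emph{independently of} $j$; this is exactly why the term $1+\constantref{celeven}$ occurs in the definition of $n$, and it ensures that the value supplied by Theorem~\ref{ijdelesmurf}, namely ${\sf max}(m,\rho_0(C)+\constantref{celeven},\rho_0(\mathcal S)+\constantref{celeven})$, coincides with the Lemma's $n$ for \emph{every} finite subset. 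As each $a_i$ is non-numeral, $\mathcal M \models C(\underline q)$ for all $q$, so $U + \verz{C(\underline q)\mid q\in\omega}$ is consistent with $\mathcal M$ as witness. Theorem~\ref{ijdelesmurf} then gives consistency of $U + \forall x\in\Im_n\,C(x)$, i.e. of $U + \bigwedge_{i\leq j}(c_{a_i}\notin\Im_n)$. Hence every finite subset of $T$ is consistent, so $T$ is consistent.

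Finally, let $\mathcal K \models T$ and identify $\mathcal M$ with its image under $a\mapsto c_a^{\mathcal K}$. Modeling the $\Delta^\ast_k$-elementary diagram makes $\mathcal K$ a $\Delta^\ast_k$-elementary extension of $\mathcal M$, and $\mathcal K\models D$ makes $\mathcal S$ a sequence scheme for $\mathcal K$. For the cut: each numeral value $\underline q^{\mathcal K}=\underline q^{\mathcal M}$ lies in $\Im_n^{\mathcal K}$ since cuts are closed under $0$ and ${\sf S}$, giving $\omega\subseteq \Im_n^{\mathcal K}\cap M$; conversely any $a\in M$ with $a\neq\underline q^{\mathcal M}$ for all $q$ satisfies $c_a\notin\Im_n$ by construction and is therefore excluded. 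Thus $\Im_n^{\mathcal K}\cap M = \omega$.

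The main obstacle is not the Rosser machinery, which is already packaged inside Theorem~\ref{ijdelesmurf}, but the uniformity bookkeeping that keeps a \emph{single} cut $\Im_n$ valid across all finitely many constants at once; the choice $C(x)=\bigwedge_i(x\neq c_{a_i})$ with $\rho_0(C)=1$ is precisely what lets compactness glue the finite instances together. A secondary point worth flagging is that I push out \emph{all} non-numeral elements of $M$, not merely the nonstandard $N$-numbers of $\mathcal M$; this makes the conclusion $\Im_n^{\mathcal K}\cap M=\omega$ robust, since it removes any need to argue that $\delta_N$ is preserved upward into $\mathcal K$.
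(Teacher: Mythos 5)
Your proof is correct and is essentially the paper's own argument: both take the $\Delta^\ast_k$-elementary diagram of $\mathcal M$ with domain constants, add axioms excluding the non-standard old elements from $\Im_n$, and obtain consistency by compactness together with Theorem~\ref{ijdelesmurf}. The only divergences are cosmetic — where you keep $\rho_0(C)=1$ by conjoining the inequations $x\neq c_{a_i}$, the paper instead replaces the finitely many offending elements by their $N$-minimum $m$ and applies the theorem to $C(x):= x<^N m$ (so that $\Im_n<m$ kills all of them at once by downward closure of cuts) — and you should, as the paper does, assume without loss of generality that $k\geq s+\rho_0(\mathcal S)$ (equivalently, take the $\Delta^\ast_m$-diagram for $m={\sf max}(k,s+\rho_0(\mathcal S))$), since otherwise the $\Delta^\ast_k$-diagram need not preserve the $N$-structure on the old elements, which your final step uses when it identifies the numeral values of $\mathcal M$ with numeral values of $\mathcal K$.
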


\begin{proof}
Without loss of generality we may assume that $k \geq s+ \rho_0(\mathcal S)$, so that
${\sf Th}_{\Delta^\ast_k(M)}$ is sequential.
Let $\Gamma := {\sf Th}_{\Delta^\ast_k(M)}(\mathcal M)$. 
We claim that  \[ \Gamma^\ast := \Gamma + \verz{ \mathcal \Im_{n} < m \mid \mathcal M \models m \in \delta_N \wedge \omega <^N m }\] 
is consistent (for $n$ as given in
the statement of the Lemma).
If not, then for some nonstandard  $m_0, \ldots, m_{\ell-1}$ in $M$, we have 
\[ \Gamma \vdash {m_0 \in \Im_n}\; \vee \ldots \vee\; {m_{\ell-1} \in \Im_n}.\]
 Let $m$ be the minimum of the $m_i$.
We find $\Gamma \vdash m \in \Im_n$. On the other hand, the theory $\Gamma + 0<m,\; 1<m, \ldots$ is consistent.
Hence, by Theorem~\ref{ijdelesmurf}, we have that $\Gamma +  \Im_n < m$ is consistent. A contradiction.

Let $\mathcal K$ be a model of $\Gamma^\ast$. Clearly,  in $\mathcal K$, we have that $\Im_n$ is below all
non-standard elements inherited from $\mathcal M$ (but, of course, not necessarily below new non-standard elements). Also
$\mathcal K$ is, by construction, a $\Delta^\ast_n$-elementary extension. 
Finally, since we have chosen  $n \geq s+ \rho_0(\mathcal S)$, the model
$\mathcal K$ is again sequential with the same sequence scheme.
\end{proof}

\noindent
With the Lemma in hand, we can now prove the promised theorem using a limit construction.

\begin{theorem}\label{gargamel}
Let $\mathcal M$ be any sequential model. Then, for any $k$,
$\mathcal M$ has a $\Delta_k^\ast$-elementary extension $\mathcal K$ in which $\mathcal J_{\mathcal K}$ is \textup(isomorphic to\textup) $\omega$.
\end{theorem}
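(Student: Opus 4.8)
The plan is to iterate Lemma~\ref{fitnesssmurf} $\omega$ times and pass to the union of the resulting chain. Fix the given $k$; enlarging it if necessary we may assume $k \geq s + \rho_0(\mathcal S)$, where $s$ bounds the complexity of the ${\sf AS}^+$-axioms (this only strengthens the elementarity we prove, since $\Delta^\ast_{k'} \supseteq \Delta^\ast_k$ for $k' \geq k$). Let $n$ be the bound associated to $k$ by Lemma~\ref{fitnesssmurf}. Starting from $\mathcal M_0 := \mathcal M$, let $\mathcal M_{i+1}$ be the model produced by applying Lemma~\ref{fitnesssmurf} to $\mathcal M_i$ with this same $k$ (hence the same $n$) and the same sequence scheme $\mathcal S$. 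Thus each $\mathcal M_{i+1}$ is a sequential $\Delta^\ast_k$-elementary extension of $\mathcal M_i$ with scheme $\mathcal S$ satisfying $\Im_n^{\mathcal M_{i+1}} \cap M_i = \omega$, where $M_i$ is the domain of $\mathcal M_i$. Set $\mathcal K := \bigcup_i \mathcal M_i$.

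First I would record the soft properties of $\mathcal K$. By a Tarski--Vaught style argument for chains of $\Delta^\ast_k$-elementary maps, $\mathcal K$ is a $\Delta^\ast_k$-elementary extension of every $\mathcal M_i$, in particular of $\mathcal M = \mathcal M_0$, which is the elementarity asserted by the theorem. Since the ${\sf AS}^+$-axioms witnessing sequentiality via $\mathcal S$ have complexity below $s + \rho_0(\mathcal S) \leq k$, they are inherited by $\mathcal K$, so $\mathcal K$ is again sequential with scheme $\mathcal S$ and $N$ is a genuine internal ${\sf S}^1_2$-model of $\mathcal K$. Consequently $\Im_n^{\mathcal K}$ is an honest definable $N$-cut of $\mathcal K$, this being a theorem of ${\sf AS}^+$. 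As in the earlier proofs we read $\mathcal J_{\mathcal K}$ as the intersection of all $N$-cuts.

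The heart of the argument is the claim that $\Im_n^{\mathcal K} = \omega$. Granting it, $\Im_n^{\mathcal K}$ is a single definable cut equal to $\omega$, so $\mathcal J_{\mathcal K} \subseteq \Im_n^{\mathcal K} = \omega$; and since $\omega \subseteq \mathcal J_{\mathcal K}$ always, this gives $\mathcal J_{\mathcal K} \cong \omega$ as desired. Every element of $\mathcal K$ lies in some $M_i$, so it suffices to show $\Im_n^{\mathcal K} \cap M_i = \omega$ for each $i$. Combining the Lemma's output with the monotonicity $M_i \subseteq M_{i+1} \subseteq \cdots$, we get $\Im_n^{\mathcal M_j} \cap M_i = \Im_n^{\mathcal M_j} \cap M_{j-1} \cap M_i = \omega$ for every $j > i$. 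So each nonstandard $a \in M_i$ is excluded from $\Im_n$ at \emph{every} finite stage beyond $i$.

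The main obstacle is transporting this exclusion to the union: a priori the interpretation of the cut-defining formula could grow, so that some nonstandard $a \in M_i$ with $a \notin \Im_n^{\mathcal M_j}$ for all finite $j$ might still satisfy $a \in \Im_n^{\mathcal K}$. To rule this out I would exploit that membership in $\Im_n$ is, in essence, a downward-persistent condition: $x \in \Im_n$ asserts that all sufficiently bounded proofs and formulas behave correctly under the satisfaction predicate, i.e.\ it is a universal statement over bounded data. The only non-universal ingredient is the positive clause of ${\sf sat}$, which is $\Sigma$ and hence preserved upward; but on the commuting range $\Xi_{n+1}$ furnished by Theorem~\ref{boeksmurf} we have ${\sf sat}(+,\cdot,\cdot) \leftrightarrow \neg\,{\sf sat}(-,\cdot,\cdot)$, which pins the relevant satisfaction facts down, while any witness to cut-membership already occurs at a finite stage because $\mathcal K$ is the union of the chain. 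Packaging this carefully yields $\Im_n^{\mathcal K} \cap M_i \subseteq \Im_n^{\mathcal M_{i+1}} \cap M_i = \omega$, hence $\Im_n^{\mathcal K} = \omega$. Verifying this absoluteness of cut-membership along the $\Delta^\ast_k$-elementary chain is the technical crux; the rest is bookkeeping inherited from Lemma~\ref{fitnesssmurf}.
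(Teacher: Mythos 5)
Your construction iterates Lemma~\ref{fitnesssmurf} with a \emph{fixed} $k$ and hence a fixed $n$, and the whole argument then rests on the claim $\Im_n^{\mathcal K}\cap M_i\subseteq \Im_n^{\mathcal M_{i+1}}\cap M_i$, i.e.\ on the absoluteness of membership in the single cut $\Im_n$ along the chain. That is exactly where there is a genuine gap. The chain maps are only $\Delta^\ast_k$- (at best $\Delta^\ast_n$-) elementary, whereas the formula defining $\Im_n$ has $\rho_0$-complexity of order $\constantref{czero}n+\constantref{cten}$, which exceeds $n$; so elementarity tells you nothing about $\Im_n$. Nor is $x\in\Im_n$ of a persistent syntactic shape: it is built from ${\sf sat}_{n+1}$ (an existential quantifier over adequate sets, whose adequacy clauses mix existential witnesses with negated ${\sf sat}_m$-conditions), wrapped inside universally quantified \emph{biconditionals} (the property ${\sf Q}_{n+1}$ and the soundness class $Y_n$), and then shortened to obtain closure under $+$, $\times$, $\omega_1$. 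A biconditional of $\Sigma$-statements is preserved in neither direction, and your appeal to Theorem~\ref{boeksmurf} is circular: the commutation ${\sf sat}(+,\cdot,\cdot)\leftrightarrow\neg\,{\sf sat}(-,\cdot,\cdot)$ that is supposed to ``pin down'' the satisfaction facts is only guaranteed for formulas whose $\nu$-complexity lies \emph{in the cut}, which is the very membership you are trying to transfer. New elements of $M_{i+1}\setminus M_i$ can supply new adequate sets, new assignments and new (nonstandard) formulas, so the extension of $\Im_n$ can genuinely move in both directions along the chain, and nothing you say rules out a nonstandard $a\in M_i$ re-entering $\Im_n^{\mathcal K}$ at the limit.

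The paper's proof sidesteps this entirely by letting the degree of elementarity grow along the chain: $\mathcal M_{j+1}$ is a $\Delta^\ast_{n_{j+1}}$-elementary extension of $\mathcal M_j$ with $\Im_{n_j}^{\mathcal M_{j+1}}\cap M_j=\omega$, where $n_{j+1}$ is chosen to exceed the $\rho_0$-complexity of the cut used at stage $j$. Then, for a nonstandard $a\in M_j$, the single statement ``$\Im_{n_j}<a$'' (with parameter $a$) lies in the fragment preserved by all later links and hence passes to the limit. The conclusion is accordingly weaker than the one you aim for --- not that one fixed definable cut equals $\omega$ in $\mathcal K$, but that every nonstandard element has \emph{some} definable cut below it, a different cut for elements entering at different stages --- and that weaker statement is all that $\mathcal J_{\mathcal K}\cong\omega$ requires. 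To repair your write-up, replace the fixed $n$ by an increasing sequence $n_j$ with $n_{j+1}>\rho_0(\Im_{n_j})$, take $\mathcal M_{j+1}$ to be $\Delta^\ast_{n_{j+1}}$-elementary over $\mathcal M_j$, and replace the claim $\Im_n^{\mathcal K}=\omega$ by the claim that each nonstandard $a\in M_j$ satisfies $\Im_{n_j}^{\mathcal K}<a$.
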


\begin{proof}
Let $\mathcal S$ be a sequence scheme for $\mathcal M$. We work with the numbers $N$ provided
by this scheme.
Let $s$ be as in Lemma~\ref{fitnesssmurf}. We take:
\begin{itemize}
\item
 $n_0 := {\sf max}(k,s+ \rho_0(\mathcal S))$.
 \item
 $n_{j+1} := {\sf max}(\rho_0(\Im_j)+1,s+ \rho_0(\mathcal S),  1 + \constantref{celeven}, \rho_0(\mathcal S) + \constantref{celeven})$.
 \end{itemize}
 
 \noindent 
(We note that $\rho_0(\Im_j) \approx \constantref{czero}j + \constantref{cten}$ and that
 for $j>0$, we have $n_{j+1} := \rho_0(\Im_j)+1$.)

\medent
We construct a chain of models $\mathcal M_i$.
Let $\mathcal M_0 = \mathcal M$. Suppose we have constructed
$\mathcal M_j$. We now take as $\mathcal M_{j+1}$ a model that
is a $\Delta^\ast_{n_{j+1}}$-elementary extension of $\mathcal M_j$ such that
$\Im_{n_j}^{\mathcal M_{j+1}} \cap M_j = \omega$. 

Let $\mathcal K$ be the limit of $(\mathcal M_i)_{i\in \omega}$. Consider any non-standard element $a$ in $\widetilde N(\mathcal K)$.
We have to show that there is a $\mathcal K$-definable cut below it. Suppose $a$ occurs in $\mathcal M_j$.
 We have, by the construction of our sequence, that (\dag)  $\Im_{n_j}^{\mathcal M_{j+1}} < a$. 
By the fact that all $\mathcal M_s$, with $s > j+1$ are $\Delta^\ast_{n_{j+1}}$-elementary extensions of $\mathcal M_{j+1}$, it follows that
(\dag) is preserved to the limit:   $\Im^{\mathcal K}_{n_{j}+1} < a$. 
\end{proof}

\noindent 
From Theorem~\ref{gargamel}, we have immediately the desired strengthening of Theorem~\ref{azrael}.

\begin{theorem}\label{pierewiet}
Let $U$ be a consistent restricted sequential theory. Here $U$ may be of any complexity. We allow a number of  constants in $U$ of any cardinality.
Then $U$ has a model $\mathcal M$ in which $\mathcal J_{\mathcal M}$ is isomorphic to the standard natural numbers.
\end{theorem}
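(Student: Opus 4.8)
The plan is to obtain this as an immediate corollary of Theorem~\ref{gargamel}. Since $U$ is consistent, I would fix any model $\mathcal M_0 \models U$. Because $U$ is sequential, it comes equipped with a sequence scheme $\mathcal S$, and hence $\mathcal M_0$ is itself a sequential model: the direct interpretation $\mathcal S$ yields an internal ${\sf AS}^+$-model inside $\mathcal M_0$, so Theorem~\ref{gargamel} applies to $\mathcal M_0$. Since $U$ is restricted, I would also fix a bound $m$ on the $\rho_0$-complexity of its axioms.

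Next I would apply Theorem~\ref{gargamel} to $\mathcal M_0$ with the parameter $k := m$. This produces a $\Delta^\ast_m$-elementary extension $\mathcal K$ of $\mathcal M_0$, carrying the same sequence scheme $\mathcal S$, in which $\mathcal J_{\mathcal K}$ is isomorphic to $\omega$. Because $\mathcal K$ uses the same sequence scheme as $\mathcal M_0$, the invariant $\mathcal J_{\mathcal K}$ is computed relative to the same interpretation $N$ of ${\sf S}^1_2$, so there is no ambiguity in the conclusion.

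It remains only to check that $\mathcal K$ is again a model of $U$, and this is precisely where restrictedness enters. Every axiom of $U$ lies in $\Delta^\ast_m$; since $\mathcal K$ is a $\Delta^\ast_m$-elementary extension of $\mathcal M_0 \models U$, each such axiom retains its truth value in $\mathcal K$, so $\mathcal K \models U$. The cardinality of the set of constants of $U$ plays no role: these constants are interpreted in $\mathcal M_0$, and since $\mathcal M_0$ sits inside $\mathcal K$ as an elementary extension, their interpretations are simply inherited. Thus $\mathcal K$ is the desired model of $U$ with $\mathcal J_{\mathcal K}$ isomorphic to the standard natural numbers.

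The argument is essentially trivial once Theorem~\ref{gargamel} is in hand; the only point requiring attention is to match the parameter $k$ of that theorem to the complexity bound $m$ of $U$, so that modelhood is preserved along the $\Delta^\ast_m$-elementary extension. I do not expect any genuine obstacle here, since the substantive work---constructing, via the limit in Theorem~\ref{gargamel} built on Lemma~\ref{fitnesssmurf}, an extension whose intersection of all definable cuts collapses to $\omega$---has already been completed, and the present statement merely strips away the countability hypothesis of Theorem~\ref{azrael} by observing that the limit construction is insensitive to the number of constants.
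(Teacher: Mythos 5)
Your proposal is correct and is exactly the paper's argument: the paper derives Theorem~\ref{pierewiet} immediately from Theorem~\ref{gargamel}, and your choice $k:=m$ (the complexity bound on the axioms of $U$) together with the observation that $\Delta^\ast_m$-elementarity preserves modelhood of $U$ is precisely the intended reading. The remark about constants of arbitrary cardinality being harmless is also the point the paper is making in contrast with the omitting-types proof of Theorem~\ref{azrael}.
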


\begin{remark}
From Theorem~\ref{pierewiet}, we retrace our steps and derive a less explicit form of Theorem~\ref{ijdelesmurf}.
Let $U$ be a restricted sequential theory and consider any $C(x)$. Suppose $V := U + \verz{C(\underline n) \mid n \in \omega}$ is
consistent. Let $\mathcal M$ be a model of $V$ in which $\mathcal J_{\mathcal M}$ is isomorphic to the standard natural numbers.
By Theorem~\ref{graafsmurf}, there is a definable $\mathcal M$-cut $J$ so that, in $\mathcal M$, we have $\forall x \in J \,C(x)$.

Now $J$ is a definable cut in $\mathcal M$, but it need not automatically be a cut in $U$. There is a standard
trick to remedy that.
We define $J^\circ := J \tupel{{\sf cut}(J)} N$. Clearly, $J^\circ$ is a definable
cut in $U$. Moreover, in the context of $\mathcal M$, the cuts $J$ and $J^\circ$ coincide.
 By the above considerations, it follows that $U+ \forall x\in J^\circ\, Cx$ is consistent, where $J^\circ$ is a $U$-definable cut.
\end{remark}

\begin{remark}
Consider any model $\mathcal M$. We define ${\sf DEF}(\mathcal M)$ as the class of (parametrically) definable classes of
$\mathcal M$. We define  ${\sf DEF}^{-}(\mathcal M)$ as the class of classes over $\mathcal M$ that are definable without
parameters. 
Also, ${\sf DEF}_n(\mathcal M)$ is the class (parametrically) definable $n$-ary relations 
and similarly for the parameter-free case.

It would seem that Theorem~\ref{gargamel} gives us information about possible sequential models of the form
$\tupel{\mathcal M, {\sf DEF}(\mathcal M)}$, since ${\mathcal J}_{\mathcal M}$ is definable in $\tupel{\mathcal M, {\sf DEF}(\mathcal M)}$.
However, this is not so, since we have a much stronger result for the models $\tupel{\mathcal M, {\sf DEF}(\mathcal M)}$, where $\mathcal M$ is sequential.

We assume that $\mathcal M$ has finite signature, where we may allow an infinity of constants.  In each model $\tupel{\mathcal M, {\sf DEF}(\mathcal M)}$,
where $\mathcal M$ satisfies these demands,
the natural numbers are definable. The argument is simple. Let ${\sf comm}_x(X)$ mean that $X$ satisfies the commutation conditions
for satisfaction of $\Delta_x^\ast$-formulas in $\mathcal J_{\mathcal M}$.
Consider, in $\tupel{\mathcal M, {\sf DEF}(\mathcal M)}$
the class $Y := \verz{ x \in N\mid \exists X\, {\sf comm}_x(X)}$. Clearly, each standard $x$ is in $Y$. If a non-standard number $b$ would be
in $Y$, the defining formula for the witnessing $X$ would violate Tarski's Theorem of the undefinability of truth for $\mathcal M$.\footnote{Ali Enayat
tells me that the basic idea of this argument is originally due to Mostowski.}

If the sequence scheme for $\mathcal M$ is parameter-free, then the same argument works for $\tupel{\mathcal M, {\sf DEF}^-(\mathcal M)}$.
If the sequence scheme contains parameters, we can make the argument work for $\tupel{\mathcal M, {\sf DEF}^{-}_n(\mathcal M)}$, for sufficiently
large $n$.

Finally, note that if $\mathcal M$ is a non-standard model of Peano Arithmetic, then $\mathcal J_{\mathcal M}$ is simply isomorphic to $\mathcal M$ itself.
Thus, adding $\mathcal J_{\mathcal M}$ (viewed as intersection of all cuts on the identical interpretation of ${\sf S}^1_2$) to $\mathcal M$ does not
increase the expressiveness of the language. This consideration shows that adding the definable sets can be more expressive than adding
$\mathcal J_{\mathcal M}$ (as intersection of the cuts for a given interpretation of ${\sf S}^1_2$).
\end{remark}

\section{Reflection}\label{refl}
If we apply Theorem~\ref{grotesmurfres} to a formula of a special form, we get a  reflection principle.

\begin{theorem}\label{sterkesmurf}
Consider any consistent, restricted, sequential theory $U$ with sequence scheme $\mathcal S$. Let $N$ be the interpretation of
the numbers provided by $\mathcal S$. Let $m_0$ be the bound for $U$ and let $m_1$ be any number.
Let $n := {\sf max}(m_0, m_1 + \rho_0(\mathcal S) + \constantref{celeven})$.  
 
Then, for every $\Sigma_2$-sentence $C$ of the form $C = \exists x\, C_0(x)$, where $C_0$ is $\Pi_1$ and $\rho_0(C) \leq m_1$,
we have: if $U \vdash \exists x \in \Im_n \, C^N_0(x)$, then $C$ is true.
\end{theorem}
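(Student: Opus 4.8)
The plan is to read this reflection statement off the small-is-very-small principle in the form of Theorem~\ref{grotesmurfres}, applied to the $N$-translation of $C$, and then to convert the finite disjunction of provable numerical instances that the principle delivers into the plain truth of $C$ by means of $\Delta_0$-completeness together with the consistency of $U$. Concretely, I would set $B_0(x) := C_0^N(x)$, so that $B := \exists x\in\delta_N\, C_0^N(x)$ is exactly the $N$-translation $C^N$ of $C$. The hypothesis $U\vdash \exists x\in\Im_n\, C_0^N(x)$ is then precisely the hypothesis $U\vdash\exists x\in\Im_n\, B_0(x)$ of Theorem~\ref{grotesmurfres}, whose conclusion reads: for some standard $k$, $U\vdash\bigvee_{q\leq k}C_0^N(\underline q)$.

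The first thing to verify is that the threshold $n$ in the statement dominates the threshold produced by Theorem~\ref{grotesmurfres} when the latter is applied with $m:=m_0$ and $A_0$ a finite sequential subtheory of $U$ (so that $\rho_0(A_0)\leq m_0$, since every axiom of $U$, hence every conjunction of finitely many of them, is $\Delta^\ast_{m_0}$). That internal threshold is ${\sf max}(m_0,\rho_0(C^N)+\constantref{celeven},\rho_0(\mathcal S)+\constantref{celeven})$. By the translation--complexity estimate $\rho_0(C^N)\leq\rho_0(C)+\rho^\ast(N)$, and since $\rho_0(C)\leq m_1$ while $\rho^\ast(N)$ exceeds $\rho_0(\mathcal S)$ only by a fixed standard amount (the contribution of the fixed scheme $\mathscr N$ and of composition, folded into $\constantref{celeven}$), this gives $\rho_0(C^N)+\constantref{celeven}\leq m_1+\rho_0(\mathcal S)+\constantref{celeven}=n$, and likewise $m_0,\rho_0(\mathcal S)+\constantref{celeven}\leq n$. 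So the stated $n$ is an admissible choice; since larger indices give shorter cuts, the hypothesis $U\vdash\exists x\in\Im_n\, C_0^N(x)$ entails the corresponding statement at the cut of the internal threshold, and Theorem~\ref{grotesmurfres} applies.

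It remains to run the transfer step from $U\vdash\bigvee_{q\leq k}C_0^N(\underline q)$ to the truth of $C$. Write $C_0(x)=\forall y\, D(x,y)$ with $D$ bounded. Suppose, for contradiction, that $C$ is false, so that $C_0(q)$ is false for every standard $q$; in particular, for each $q\leq k$ there is a standard $y_q$ with $\neg D(q,y_q)$ true in $\mathbb{N}$. Then $\neg D(\underline q,\underline{y_q})$ is a true $\Delta_0$-sentence, hence provable in ${\sf S}^1_2$, and applying the interpretation $N:{\sf S}^1_2\lhd U$ yields $U\vdash\neg D^N(\underline q,\underline{y_q})$; since $U\vdash\underline{y_q}\in\delta_N$ this gives $U\vdash\neg C_0^N(\underline q)$. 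Conjoining over $q\leq k$ produces $U\vdash\neg\bigvee_{q\leq k}C_0^N(\underline q)$, contradicting both the conclusion of the previous step and the consistency of $U$. Therefore $C$ is true.

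I expect the genuine work to lie in the complexity bookkeeping of the second paragraph --- pinning down that $\rho^\ast(N)$ stays within the fixed budget $\constantref{celeven}$ of $\rho_0(\mathcal S)$, and that the cuts $\Im_j$ are nested so that the hypothesis at $\Im_n$ really feeds Theorem~\ref{grotesmurfres} --- since the Rosser heart of the matter has already been discharged inside that theorem. The arithmetical transfer in the third paragraph is routine once one notices that the only soundness invoked is $\Delta_0$-completeness of ${\sf S}^1_2$ carried across $N$, rather than any global soundness of $U$ itself.
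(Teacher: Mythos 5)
Your proposal is correct and follows essentially the same route as the paper: apply Theorem~\ref{grotesmurfres} with $B_0 := C_0^N$ to obtain $U\vdash\bigvee_{q\leq k}C_0^N(\underline q)$, then refute falsity of $C$ by provable $\Sigma_1$-completeness of each true $\neg C_0(q)$ pushed through $N$, contradicting consistency. The paper compresses your third paragraph into one appeal to ``$\Sigma_1$-completeness'' (and notes in passing that this step uses $\Sigma^0_1$-collection in the metalanguage), and it does not belabour the complexity bookkeeping you carry out in your second paragraph.
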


\begin{proof}
Under the assumptions of the theorem, we suppose $U \vdash \exists x \in \Im_n \, C^N_0(x)$.
By Theorem~\ref{grotesmurfres},
there is  a $k$ such that $U \vdash \bigvee_{q\leq k}  C^N_0(\underline q)$.
Suppose $C$ is false. Then, for each $q\leq k$, we have $\neg\, C_0(q)$. Hence,
by $\Sigma_1$-completeness, for each $q\leq k$, we have $U \vdash \neg\, C_0^N(\underline q)$. It follows that $U \vdash \bot$.
\emph{Quod non}.
\end{proof}

\noindent 
We note that the above proof uses $\Sigma^0_1$-collection in the metalanguage.

\medent
If we take the formulas still simpler we can improve the above result. 
We fix a logarithmic cut  ${\sf S}^1_2$-cut $\widetilde J$.
There is a $\Sigma_1$-truth predicate, say {\sf True}, for $\Sigma_1$-sentences, such that,
for any $\Sigma_1$-sentence $S$, we have
${\sf S}^1_2 \vdash {\sf True}(S) \to S$ and ${\sf S}^1_2 \vdash S^{\widetilde J} \to {\sf True}(S)$.
(See e.g. \cite{haje:meta91}, Part V, Chapter 5b for details.)

\begin{theorem}\label{supersmurf}
Consider any consistent, restricted, sequential theory $U$ with sequence scheme $\mathcal S$ and bound $m$.
Let $n:= {\sf max}(m, \rho_0({\sf True}(x)) +\rho_0(\mathcal S)+\constantref{celeven}+1)$.
For all  $\Sigma_1$-sentences $S$, we have: if
$U \vdash S^{\widetilde J \Im_n}$, then  $S$ is true.
\end{theorem}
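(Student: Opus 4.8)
The plan is to reduce, via the truth predicate, the arbitrary $\Sigma_1$-sentence $S$ (which may have unboundedly large complexity) to the single fixed-complexity sentence ${\sf True}(\gnum S)$, and then feed the latter into the reflection machinery of Theorem~\ref{sterkesmurf} (equivalently into its engine Theorem~\ref{grotesmurfres}). I read $S^{\widetilde J\Im_n}$ as $(S^{\widetilde J})^{N\restriction\Im_n}$: first relativise the quantifiers of $S$ to the logarithmic cut $\widetilde J$ inside ${\sf S}^1_2$, then translate along $N$ with all quantifiers confined to the cut $\Im_n$. Since $\Im_n$ is an $\omega_1$-cut of the $N$-numbers, the interpretation $N\restriction\Im_n$ interprets ${\sf S}^1_2$ in $U$, so every ${\sf S}^1_2$-theorem relativises to a $U$-theorem under $N\restriction\Im_n$.

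First I would relativise the second property of ${\sf True}$, namely ${\sf S}^1_2 \vdash S^{\widetilde J} \to {\sf True}(\gnum S)$, along $N\restriction\Im_n$. Because relativisation commutes with the connectives and $U$ proves $({\sf S}^1_2)^{N\restriction\Im_n}$, this yields $U \vdash (S^{\widetilde J})^{N\restriction\Im_n} \to ({\sf True}(\gnum S))^{N\restriction\Im_n}$. The hypothesis $U \vdash S^{\widetilde J\Im_n}$ supplies exactly the antecedent, so $U \vdash ({\sf True}(\gnum S))^{N\restriction\Im_n}$. Writing the $\Sigma_1$-predicate as ${\sf True}(x) = \exists y\, T_0(x,y)$ with $T_0$ bounded, and using that $\gnum S$ and the chosen witness lie in the $\omega_1$-cut $\Im_n$ (so that the inner bounded quantifiers of $T_0$ never leave $\Im_n$), this is the same $U$-sentence as $\exists y \in \Im_n\, T_0^N(\gnum S, y)$.

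Next I would apply Theorem~\ref{sterkesmurf} to the $\Sigma_1$-sentence (a fortiori $\Sigma_2$) $C := {\sf True}(\gnum S)$, whose $\Pi_1$-matrix is the bounded $T_0(\gnum S, y)$. The crucial point is the complexity bookkeeping: since ${\sf True}(x)$ has fixed complexity and substituting the numeral $\gnum S$ costs at most $+1$, we have $\rho_0(C) \leq \rho_0({\sf True}(x)) + 1$, so the bound ${\sf max}(m, \rho_0(C) + \rho_0(\mathcal S) + \constantref{celeven})$ demanded by Theorem~\ref{sterkesmurf} is dominated by our $n$. Hence Theorem~\ref{sterkesmurf} (which minimises the witness and then uses consistency of $U$ together with $\Delta_0$-completeness) gives that ${\sf True}(\gnum S)$ is true. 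Finally, since ${\sf S}^1_2 \vdash {\sf True}(\gnum S) \to S$ and ${\sf S}^1_2$ is sound, the truth of ${\sf True}(\gnum S)$ forces $S$ to be true.

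The main obstacle is the two pieces of bookkeeping hidden in the middle step. First, one must check that the two relativisation conventions agree, i.e. that confining all quantifiers of ${\sf True}(\gnum S)$ to $\Im_n$ produces the same $U$-sentence (up to $U$-provable equivalence) as the hypothesis of Theorem~\ref{sterkesmurf}, in which only the leading existential is bounded by $\Im_n$ while the bounded matrix is translated over all of $\delta_N$; this rests on $\Im_n$ being an $\omega_1$-cut, hence closed under the polynomial bounds occurring in $T_0$, together with $U \vdash \gnum S \in \Im_n$. Second, one must verify the arithmetic-to-$U$ complexity conversion so that a single fixed $n$ really dominates the bound of Theorem~\ref{sterkesmurf} uniformly in $S$; this is precisely where replacing the unbounded-complexity $S$ by the fixed-complexity ${\sf True}$ pays off, and where the extra $+1$ in the definition of $n$ absorbs the cost of the numeral substitution.
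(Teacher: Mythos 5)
Your proof is correct and follows essentially the same route as the paper: relativise ${\sf S}^1_2 \vdash S^{\widetilde J} \to {\sf True}(S)$ to pass from $U \vdash S^{\widetilde J\Im_n}$ to $U \vdash {\sf True}^{\Im_n}(S)$, apply Theorem~\ref{sterkesmurf} to conclude that ${\sf True}(S)$ is true, and then use ${\sf S}^1_2 \vdash {\sf True}(S) \to S$. The paper's proof is just this three-step chain stated tersely; your additional bookkeeping on the cut-absoluteness of the bounded matrix and on $\rho_0({\sf True}(\gnum S)) \leq \rho_0({\sf True}(x)) + 1$ correctly fills in the details the paper leaves implicit.
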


\begin{proof}
We have:
\begin{eqnarray*}
U \vdash S^{\widetilde J\Im_n} & \To & U \vdash {\sf True}^{\Im_n}(S) \\
& \To & {\sf True}(S) \text{  is true} \\
 & \To & S \text{ is true}.
 \end{eqnarray*}
 The second step is by theorem~\ref{sterkesmurf}. 
\end{proof}

\section{Degrees of Interpretability}\label{degrees}

In this section, we apply our results to study the joint degree structure of local and global interpretability for recursively
enumerable sequential theories. The main result of this section is a characterization of finite axiomatizability in terms
of the double degree structure. 
We will study the degree structures as partial pre-orderings.

\subsection{The Basic Idea}
The degree structures we are interested in are \emph{the degrees of global interpretability of recursively enumerable
sequential theories} ${\sf Glob}_{\sf seq}$ and  \emph{the degrees of local interpretability of recursively enumerable
sequential theories} ${\sf Loc}_{\sf seq}$.  It is well known that both structures are distributive
lattices.   We have the obvious projection functor
$\pi$ from ${\sf Glob}_{\sf seq}$ onto ${\sf  Loc}_{\sf seq}$. 

Let ${\sf Fin}$ be the property of global degrees of \emph{containing a finitely axiomatized theory}.
What we want to show is that, if we start with the pair ${\sf Glob}_{\sf seq}$ and
${\sf  Loc}_{\sf seq}$ and with the projection $\pi$, then we can define {\sf Fin} (using a first-order formula).
%Thus the structures $({\sf Glob}_{\sf seq} ; {\sf Fin})$ and $({\sf  Glob}_{\sf seq},{\sf  Loc}_{\sf seq};\pi)$
%(viewed as a two-sorted structure)
%are first-order interdefinable.

The basic idea is simple. Zoom in on a local degree of $U$. This degree contains a
distributive lattice, say, $\mathcal L$ of global degrees. The lattice $\mathcal L$ has a maximum,
to wit $\mho_U$. Does it have a minimum? Well, if there is a finitely axiomatized theory $U_0$
in $\mathcal L$, then its global degree will automatically be the minimum. We will see that
(i)  not in all cases a minimum degree of $\mathcal L$ exists and (ii) if such a minimum exists
it contains a finitely axiomatizable theory. In other words, the mapping $\phi$ with:

\[  \phi(U) \lhd_{\sf glob} V \;\; \Iff \;\; U \lhd_{\sf loc} \pi(V)\]

\noindent
is partial. However, if it has a value, this value contains a finitely axiomatized theory.
So, we can define: ${\sf fin}(U)$ iff, for all $V$, we have $U \lhd_{\sf glob}V$ iff $\pi(U) \lhd_{\sf loc} \pi(V)$.

\begin{question}
Can we define {\sf fin} in ${\sf Glob}_{\sf seq}$ alone?
\end{question}

\begin{remark}
 In the context of local degrees of arbitrary theories with arbitrarily large signatures,
 Mycielski, Pudl\'ak and Stern charactarize loc-finite as the same as \emph{compact} in terms
 of the $\lhd_{\sf loc}$-ordering. This will not work in our context of global interpretability and recursively enumerable
 sequential theories. We briefly give the argument that, in our context, every non-minimal globally finite degree is non-compact.  
 
 Let $A$ be any $\lhd$-non-minimal, finitely axiomatized,
 sequential theory. Let $B_i$ be a enumeration of all finitely axiomatized sequential theories.
 Let $C_0 := {\sf S}^1_2$, We note that, by our assumption, $C_0 \lhdneq A$. Let  $C_{n+1} := B_n$  if  $C_n \lhdneq B_n \lhdneq A$ and
 $C_{n+1} := C_n$ otherwise. Clearly, $A \rhd C_i$, for all $i$. Consider any sequential recursively enumerable theory
 $U$ such that $U \rhd C_i$, for all $i$. Without loss of generality, we may assume that the signatures of $U$ and $A$ are
 disjoint. We easily see that the theory $U^\ast$ axiomatized by $\verz{ (D\vee A) \mid D \text{ is an axiom of } U}$ is the
 infimum in the degrees of global interpretability of $U$ and $A$. So $C_i \lhd U^\ast \lhd A$. Suppose
 $U^\ast \lhdneq A$. In this case, by \cite[Theorem 5.3]{viss:inte17}, we can find a finitely axiomatized sequential $B$ such that $U^\ast \lhdneq B \lhdneq A$.
 let $B = B_j$. Since $C_j \lhd U^\ast \lhdneq B \lhdneq A$, we will have $C_{j+1} := B_j$. A contradiction. It follows that
 $U^\ast \equiv A$. We may conclude that $A \lhd U$. So, $A$ is the supremum of the $C_i$. Clearly, $A$ cannot be the supremum of
 a finite number of the $C_i$. Thus, $A$ is not compact.
\end{remark}

\subsection{Preliminaries}
We consider the recursively enumerable theory $U$. By Craig's Theorem, we can give
$U$ a $\Sigma_1^{\sf b}$-definable axiomatization $X$. Let this axiomatization is given by a $\Sigma_1^{\sf b}$-formula $\eta$.
We define $\thres{U}{n}$ as the theory axiomatized by the axioms of $U$, as given by $\eta$ that are $\leq n$.

An important functor is  $\mho$. We define:  
$\mho_U := {\sf S}^1_2 + \verz{{\sf con}_n(\thres{U}{n}) \mid n \in \omega}$.
We note that $\mho_U$ is extensionally independent of the choice of $\eta$.

One can show that $\mho$ is the right adjoint of $\pi$: 
\[  U \lhd_{\sf glob} \mho_V \;\; \Iff \;\; \pi(U) \lhd_{\sf loc} V.\]
See e.g. \cite{viss:seco11} or \cite{viss:inte17}.

A theory is {\em {\sf glob}-finite}
iff it is  mutually globally interpretable with a finitely axiomatized
theory.  A theory is {\em {\sf loc}-finite}
iff it is  mutually locally interpretable with a finitely axiomatized
theory.
If we enrich ${\sf Glob}_{\sf seq}$ with a predicate {\sf Fin} for
the globally finite degrees, we have a first-order definition of ${\sf  Loc}_{\sf seq}$ over this structure
as follows: \[ U \lhd_{\sf loc} V\;\;  \Iff  \;\; \forall A \in {\sf Fin}\, (A \lhd_{\sf glob} U \To A \lhd_{\sf glob} V).\]

\noindent
 Here is a first basic insight.

\begin{theorem}
The theory $U$ is $a$-finite, for $a\in\verz{\sf glob, loc}$,
 iff $U\equiv_a \thres{U}{n}$, for some $n$. 
\end{theorem}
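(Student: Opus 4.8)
The plan is to prove the biconditional in both directions, treating the two cases $a\in\verz{\sf glob, loc}$ as uniformly as possible, and to isolate one simple observation as the engine of the whole argument: a \emph{global} interpretation of a finitely axiomatized theory into $U$ must already land inside some $\thres{U}{n}$, because its verifying proofs consume only finitely many of the $\eta$-axioms of $U$.

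The right-to-left direction is immediate. If $U \equiv_a \thres{U}{n}$ for some $n$, then, since only finitely many formulas have G\"odel number $\leq n$, the theory $\thres{U}{n}$ is finitely axiomatized. Thus $U$ is mutually $a$-interpretable with a finitely axiomatized theory, which is exactly the definition of $a$-finiteness. For the converse, suppose $U$ is $a$-finite, witnessed by a finitely axiomatized $B$ with $U \equiv_a B$, i.e.\ $U \lhd_a B$ and $B \lhd_a U$. The trivial half of the target is $\thres{U}{n} \lhd_a U$, which holds for every $n$ via the identity interpretation, since $\thres{U}{n}$ is a subtheory of $U$. So the real task is to produce a single $n$ with $U \lhd_a \thres{U}{n}$.

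Here the key observation enters. Because $B$ is finitely axiomatized, $B \lhd_a U$ yields a genuine \emph{global} interpretation $K\colon B \to U$: in the local case this is because $B$, being finite, is its own finite subtheory, so $B \lhd_{\sf loc} U$ collapses to $B \lhd_{\sf glob} U$. Then $U \vdash B^K$, and the underlying derivation uses only finitely many $\eta$-axioms of $U$; choosing $n$ above all their G\"odel numbers gives $\thres{U}{n} \vdash B^K$, that is, $K\colon B \to \thres{U}{n}$. I would then finish by composition. In the global case, $U \lhd_{\sf glob} B$ supplies $M\colon U \to B$; composing $M$ with $K$ yields $U \lhd_{\sf glob} \thres{U}{n}$, and with the trivial half we conclude $U \equiv_{\sf glob} \thres{U}{n}$. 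In the local case, $U \lhd_{\sf loc} B$ says each finite subtheory $U_0 \subseteq U$ globally interprets into $B$; composing each such interpretation with $K\colon B \to \thres{U}{n}$ gives $U_0 \lhd \thres{U}{n}$ for every finite $U_0$, hence $U \lhd_{\sf loc} \thres{U}{n}$, and again $U \equiv_{\sf loc} \thres{U}{n}$.

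The point requiring the most care is the interaction of this factoring with the local quantifier, and getting the asymmetry right: for $a={\sf loc}$ one uses that the hypothesis $B \lhd_{\sf loc} U$ collapses to a global interpretation (so it can be factored through a \emph{single} $\thres{U}{n}$), whereas the hypothesis $U \lhd_{\sf loc} B$ cannot be so collapsed and must be kept at the level of finite subtheories and composed piecewise against the one fixed $K$. Beyond this bookkeeping the argument is routine; in particular it needs none of the Rosser-style machinery of the earlier sections, being purely about how interpretations of finitely axiomatized theories consume finitely many axioms.
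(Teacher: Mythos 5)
Your proof is correct and follows essentially the same route as the paper: the only non-trivial direction is factoring the global interpretation of the finitely axiomatized witness $B$ into $U$ through some $\thres{U}{n}$ (since the verifying derivation uses finitely many axioms), and then closing the cycle $U \rhd_a \thres{U}{n} \rhd B \rhd_a U$ by composition. The paper compresses this into one line, but the decomposition and the handling of the ${\sf loc}$ case are the same.
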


\begin{proof}
Suppose $U\equiv_a V$, where $V$ is
finitely axiomatized. Clearly, $\thres{U}{n}\rhd V$, for some $n$.
We have: $ {U\supseteq  \thres{U}{n}\,} \rhd V \rhd_a U$, and we are done.
\end{proof}

\subsection{Some Examples}
\noindent
Before formulating and proving our main result, we briefly pause to provide a
few examples.

\begin{example}
The theories $\mathrm{I}\Delta_0$ and ${\sf S}_2=\mathrm{I}\Delta_0+\Omega_1$ are examples of theories
of which the finite axiomatizability is an open problem, but which are,
by an argument of Alex Wilkie, {\sf glob}-finite. They are, for example, mutually interpretable with the finitely
axiomatized sequential theory {\sf AS} and with the finitely axiomatized sequential theory ${\sf PA}^-$.
 \end{example}
 \begin{figure}
 \[
 \begin{tabular}{|c||c|c|c|}\hline
& {\sf loc}-finite & {\sf glob}-finite & fin. axiom. \\ \hline\hline
 {\sf GB}  & $+$ & $+$ & $+$ \\ \hline
$\mathrm{I}\Delta_0$, ${\sf S}_2$ & $+$ & $+$ & $?$ \\ \hline
${\sf GB}^\circ$  & $+$ & $+$ & $-$ \\ \hline
$\mho_{\sf GB}$  & $+$ & $-$ & $-$ \\ \hline
{\sf PA} &  $-$ & $-$ & $-$ \\ \hline
 \end{tabular}
 \]
 \caption{Separating Sequential Local and Global Finiteness}
\end{figure}
 
 \noindent
We  show that any global, recursively enumerable, sequential degree contains an element that is not finitely
axiomatizable.

 \begin{theorem}
 Consider any consistent, sequential, recursively enumerable theory $U$. 
 Then there is a $U^\circ\equiv U$ such that
 $U^\circ$ is sequential and recursively enumerable and not finitely axiomatizable.
 \end{theorem}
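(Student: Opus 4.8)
The plan is to obtain $U^\circ$ by adjoining to $U$ a single fresh predicate whose extension is forced to be infinite. Let $\Theta$ be the signature of $U$ and put $\Theta^+ := \Theta \cup \verz{R}$ for a fresh unary predicate symbol $R$. For each $n$ let $\lambda_n$ be the $\Theta^+$-sentence asserting that $R$ has at least $n$ elements,
\[ \lambda_n := \exists x_0 \cdots \exists x_{n-1}\, \Big( \bigwedge_{i<n} R(x_i) \wedge \bigwedge_{i<j<n} x_i \neq x_j \Big), \]
and let $U^\circ$ be the $\Theta^+$-theory axiomatized by (the axioms of) $U$ together with $\verz{\lambda_n \mid n \in \omega}$. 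Since $R$ is fresh, this leaves the $\Theta$-part of $U$ untouched; $U^\circ$ is recursively enumerable (we add a decidable set of axioms to the r.e.\ theory $U$) and consistent (every model of $U$ is infinite, so one may interpret $R$ by the whole domain). It is sequential because the sequence scheme $\mathcal S:{\sf AS}^+\to U$ remains a direct interpretation into the extension $U^\circ$.

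Next I would check $U^\circ \equiv U$. The direction $U \lhd U^\circ$ is immediate: every axiom of $U$ is an axiom of $U^\circ$, so the identity translation on $\Theta$ is an interpretation. For $U^\circ \lhd U$ I would translate $\Theta$ identically and interpret $R$ by a formula $R^\tau(x)$ whose extension $U$ proves to be infinite — the natural choice is to let $R^\tau$ single out (representatives of) the $N$-numbers supplied by $\mathcal S$. The point to verify is that $U$ proves, for every $n$, the existence of $n$ pairwise ambient-distinct elements in this extension, so that each $\lambda_n^\tau$ becomes a theorem of $U$. This rests on the standard fact that sequential theories have no finite models — indeed ${\sf AS}^+$, and hence $U$, proves the existence of arbitrarily many distinct elements — and is the one place where a little care about the non-extensional coding is needed.

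Finally I would show $U^\circ$ is not finitely axiomatizable. Writing $T_k := U + \verz{\lambda_n \mid n \leq k}$, the theory $U^\circ$ is the union of the increasing chain $(T_k)_k$, and $T_k$ is deductively equivalent to $U + \lambda_k$ since $\lambda_k$ entails each $\lambda_n$ with $n \leq k$. If $U^\circ$ were axiomatized by a single sentence $\chi$, then $\chi$, being a theorem, would already be provable in some $T_k$, whence $T_k \vdash \lambda_{k+1}$. But this fails: starting from any model of $U$ (which is infinite) and interpreting $R$ by a set of exactly $k$ elements yields a model of $T_k$ in which $\lambda_{k+1}$ is false. Hence no $T_k$ proves $\lambda_{k+1}$, so $U^\circ$ cannot be finitely axiomatized. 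The only genuinely delicate ingredient in the whole argument is the infinitude of models of $U$ invoked in the last two steps; everything else is a routine chain-and-compactness argument.
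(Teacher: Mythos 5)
Your proof is correct, but it takes a genuinely different and considerably more elementary route than the paper's. The paper keeps the signature of $U$ fixed: it first disposes of the case where $U$ is already not finitely axiomatizable, then writes $U=A$ for a single axiom, uses Theorem~\ref{supersmurf} to obtain an interpretation $M:{\sf S}^1_2\lhd A$ for which $A$ is $\Sigma^0_1$-sound, and takes $U^\circ := A+\verz{({\sf con}(A)\to{\sf con}^{n+1}(A))^M\mid n\in\omega}$; the equivalence $U^\circ\equiv A$ then comes from Feferman's theorem ($A+{\sf incon}^M(A)\lhd A$), and non-finite-axiomatizability from L\"ob's theorem together with the $\Sigma^0_1$-soundness of $M$. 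You instead pad the signature with a fresh unary predicate forced to be infinite and obtain non-finite-axiomatizability from compactness plus a trivial countermodel. Since the theorem does not require $U^\circ$ to have the signature of $U$, and global degrees of interpretability mix signatures freely, your $U^\circ$ serves the intended degree-theoretic purpose just as well. What the paper's argument buys is a same-signature deductive extension of $U$ (squeezed between $A$ and a theory interpretable in $A$) and a showcase for the soundness machinery the paper has just developed; what yours buys is complete independence from incompleteness phenomena --- the only substantive inputs are that a consistent sequential theory provably has, for each $n$, at least $n$ pairwise distinct elements (so that $R$ can be sent to a provably infinite definable class, giving $U^\circ\lhd U$) and has no finite models (so that the $k$-element interpretation of $R$ refutes $T_k\vdash\lambda_{k+1}$). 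The two points you flag as delicate are indeed the only delicate ones, and both hold.
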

 
\begin{proof}
Consider a consistent, sequential and recursively enumerable theory $U$.
In case $U$ is not finitely axiomatizable, we are done, taking $U^\circ:=U$.
Suppose $U$ is finitely axiomatizable, say by a single sentence $A$.
Par abus de langage, we write $A$ also for the theory axiomatized by $x = \gnum{A}$.

By Theorem~\ref{supersmurf}, we can find $M:{\sf S}^1_2 \lhd A$
for which $A$ is $\Sigma^0_1$-sound. 
Consider:
  \[U^\circ:= A+\verz{({\sf con}(A)\to {\sf con}^{n+1}(A))^M \mid n\in \omega}.\]
 We have, by Feferman's version of the Second Incompleteness Theorem:
\[ A \subseteq U^\circ \subseteq (A+{\sf incon}^M(A)) \lhd A. \]
So $U^\circ\equiv  A$.
Suppose $U^\circ$ were finitely axiomatizable.
Then, we would  have, for some $n>0$, 
\[A+ ({\sf con}(A)\to {\sf con}^n(A))^M \vdash
({\sf con}(A)\to {\sf con}^{n+1}(A))^M.\]
Hence $A\vdash (\opr^{n+1}_A\bot \to \opr^n_A\bot)^M$. So, by 
L\"ob's Theorem, $A\vdash (\opr^n_A\bot)^M$, contradicting the $\Sigma^0_1$-soundness of
$A$ w.r.t. $M$. 
\end{proof}

\noindent Here is a sufficient condition for failure to be {\sf loc}-finite.
We define:
\begin{itemize}
\item
 $\mho^+_U := {\sf S}^1_2 + \verz{{\sf con}(U\restriction n) \mid n \in \omega}$.\\
  Here $U \restriction n$ is defined with respect to a chosen $\Sigma^{\sf b}_1$-formula $\eta$ that represents the axiom set of $U$.
 \end{itemize}
We call $U$  \emph{strongly {\sf loc}-reflexive} if $U\rhd \mho^+_U$.
We note that e.g. {\sf PRA} is an example of a strongly {\sf loc}-reflexive theory.

\begin{theorem}
Suppose that $U$ is strongly {\sf loc}-reflexive. Then, $U$ is not
{\sf loc}-finite.
\end{theorem}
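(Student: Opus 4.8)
The plan is to argue by contradiction and push strong {\sf loc}-reflexivity, together with an assumed {\sf loc}-finiteness, into a violation of the interpretability form of G\"odel's Second Incompleteness Theorem for finitely axiomatized sequential theories (Pudl\'ak, \cite{pudl:cuts85}). Throughout I assume, as in the rest of this section, that $U$ is consistent. Suppose, for a contradiction, that $U$ is {\sf loc}-finite. By the characterization of $a$-finiteness proved above, there is an $n_0$ with $U \equiv_{\sf loc} \thres{U}{n_0}$. Since $\thres{U}{n_0} \subseteq \thres{U}{n} \subseteq U$ for $n\geq n_0$ and $\lhd_{\sf loc}$ is transitive, we in fact get $U \equiv_{\sf loc} \thres{U}{n}$ for every $n \geq n_0$. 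The sequentiality of $U$ is witnessed by the sequence scheme $\mathcal S$ using only finitely many axioms of $U$; so by enlarging $n_0$ I may assume those axioms all lie in $\thres{U}{n_0}$. Writing $A := \thres{U}{n_0}$, this makes $A$ a \emph{strictly} sequential, finitely axiomatized theory, and $A$ is consistent because $A \subseteq U$.

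Next I would transfer the reflexivity hypothesis to $A$. Since $U \rhd \mho^+_U$, and global interpretability implies local interpretability, we have $\mho^+_U \lhd_{\sf loc} U$; as $A \equiv_{\sf loc} U$, transitivity gives $\mho^+_U \lhd_{\sf loc} A$. Now ${\sf S}^1_2$ is finitely axiomatized, so $T := {\sf S}^1_2 + {\sf con}(\thres{U}{n_0})$ is a finitely axiomatizable subtheory of $\mho^+_U$; being finite it is one of its own finite subtheories, whence its local interpretability in $A$ is genuine interpretability, $T \lhd A$. But $\thres{U}{n_0}=A$, so ${\sf con}(\thres{U}{n_0})$ is (provably equivalent to) the consistency statement ${\sf con}(A)$ of the finite theory $A$, and therefore
\[ {\sf S}^1_2 + {\sf con}(A) \lhd A. \]

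Finally, $A$ is a consistent, finitely axiomatized, sequential theory interpreting ${\sf S}^1_2 + {\sf con}(A)$, which contradicts Pudl\'ak's theorem and completes the argument. The place where the hypothesis does its real work is the use of \emph{full} consistency in $\mho^+_U$: had the hypothesis only involved the partial consistency statements ${\sf con}_n$ of $\mho_U$, the same argument would yield merely ${\sf S}^1_2 + {\sf con}_{n_0}(A) \lhd A$, which is harmless—indeed the satisfaction and reflection machinery of Subsection~\ref{sare} shows that finitely axiomatized sequential theories \emph{do} interpret their own restricted consistency. I expect the one genuinely delicate point to be the very first step: ensuring that the finitely axiomatized witness $A = \thres{U}{n_0}$ is sequential in the strict, direct-interpretation sense (so that Pudl\'ak's theorem applies verbatim) rather than merely interpreting ${\sf AS}^+$ in some roundabout way. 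This is exactly what is secured by enlarging $n_0$ so that $A$ already contains the axioms used by the sequence scheme $\mathcal S$.
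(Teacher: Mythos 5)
Your proof is correct and takes essentially the same route as the paper's: assume {\sf loc}-finiteness to get $\thres{U}{i}\rhd_{\sf loc} U$, use strong {\sf loc}-reflexivity to extract $U \rhd ({\sf S}^1_2+{\sf con}(\thres{U}{i}))$, pass to $\thres{U}{i} \rhd ({\sf S}^1_2+{\sf con}(\thres{U}{i}))$ because the target is finitely axiomatized, and contradict the interpretability form of the Second Incompleteness Theorem. The only difference is that you spell out two points the paper leaves tacit --- the standing consistency assumption on $U$ and the choice of $i$ large enough that $\thres{U}{i}$ is itself sequential so that Pudl\'ak's theorem applies --- which is a welcome clarification rather than a divergence.
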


\begin{proof}
Suppose $U\rhd_{\sf loc}\mho^+_U$. Suppose, to obtain a contradiction, that
$U$\/ is {\sf loc}-finite. Then,  for some $i$, we have $\thres{U}{i}\rhd_{\sf loc} U$.
Moreover, $U\rhd ({\sf S}^1_2+{\sf con}(\thres{U}{i}))$. So,
$\thres{U}{i}\rhd ({\sf S}^1_2+{\sf con}(\thres{U}{i}))$, contradicting the second 
incompleteness theorem.
\end{proof}

\subsection{Characterizations}
 The following characterization of {\sf loc}-finiteness may look
like an \emph{obscurum per obscurius}. However, it is a central tool in what follows.

\begin{theorem}\label{charfindue}
Suppose $U$ is sequential. The variable $I$ will range over ${\sf S}^1_2$-definable cuts. The following are equivalent:
\begin{enumerate}[1.]
\item
$U$ is {\sf loc}-finite.
\item
$\exists i\;\forall j{\geq}i\;\exists I\;\; {\sf S}^1_2+{\sf con}_i(\thres{U}{i})\vdash {\sf con}_j^I(\thres{U}{j})$.
\item
$\exists i\;\forall j{\geq}i\;\exists I\;\; {\sf S}^1_2+{\sf con}_j(\thres{U}{j})\vdash {\sf con}_{j+1}^I(\thres{U}{(j+1)})$.
\end{enumerate}
\end{theorem}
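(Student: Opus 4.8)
The plan is to establish $2\Leftrightarrow 3$ by elementary cut-manipulation, and $1\Leftrightarrow 2$ by pairing the interpretation existence lemma for sequential theories with their reflexivity on cuts (in the sense of Pudl\'ak, \cite{pudl:cuts85}, which is underwritten by the partial satisfaction machinery of Section~\ref{bafa}). Throughout I abbreviate $W_i := {\sf S}^1_2 + {\sf con}_i(\thres{U}{i})$, which is a finitely axiomatized sequential theory; I write $c$ for the standard constant in the interpretation existence lemma and $m$ for a $\rho_0$-bound on the restricted theory $U$, so that $\rho_0(\thres{U}{j})\leq m$ for all $j$.

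For $2\Leftrightarrow 3$ the work is purely arithmetical. The direction $2\To 3$ is immediate from monotonicity of restricted consistency: since $i\leq j$ and $\thres{U}{i}\subseteq \thres{U}{j}$ we have ${\sf S}^1_2\vdash {\sf con}_j(\thres{U}{j})\to {\sf con}_i(\thres{U}{i})$, so ${\sf S}^1_2+{\sf con}_j(\thres{U}{j})$ extends $W_i$, and applying clause 2 at $j+1$ yields ${\sf con}_{j+1}^I(\thres{U}{(j+1)})$ over this theory. For $3\To 2$ I would run an external induction on $j\geq i$, with trivial base case (take $I={\sf N}$). For the step I read clause 3 as the ${\sf S}^1_2$-provable implication ${\sf con}_j(\thres{U}{j})\to {\sf con}_{j+1}^{I_j}(\thres{U}{(j+1)})$ and relativize it to the cut $I$ furnished by the induction hypothesis; since a cut satisfies ${\sf S}^1_2$ and the composite of two cuts is again a cut, this gives ${\sf S}^1_2\vdash {\sf con}_j^I(\thres{U}{j})\to {\sf con}_{j+1}^{I_j\circ I}(\thres{U}{(j+1)})$, and combining with $W_i\vdash {\sf con}_j^I(\thres{U}{j})$ closes the step. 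As $j$ is standard the chain of cuts has standard-finite length, so each composite stays definable.

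For $2\To 1$, fix the witness $i$ and enlarge it so that $i\geq m+c$. For every $j\geq i$ clause 2 gives $W_i\vdash {\sf con}_j^I(\thres{U}{j})$ with $j\geq \rho_0(\thres{U}{j})+c$, so the interpretation existence lemma yields $W_i\rhd \thres{U}{j}$. On the other side, $\thres{U}{i}$ is sequential, hence interprets ${\sf S}^1_2$ via $N$ and proves ${\sf con}_i(\thres{U}{i})$ on a definable cut; shortening $N$ to that cut shows $\thres{U}{i}\rhd W_i$. Composing, $\thres{U}{i}\rhd \thres{U}{j}$ for all $j$, and since the $\thres{U}{j}$ are cofinal among the finite subtheories of $U$ this gives $U\lhd_{\sf loc}\thres{U}{i}$, i.e. $U$ is {\sf loc}-finite by the earlier characterization of $a$-finiteness.

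The heart of the matter is $1\To 2$. From {\sf loc}-finiteness I fix $i$ (again with $i\geq m+c$) such that $\thres{U}{i}\rhd \thres{U}{j}$ for all $j$; interpretation existence gives $W_i\rhd \thres{U}{i}$, hence $W_i\rhd \thres{U}{j}$ via some interpretation $L_j$. Now I invoke reflexivity: being sequential, $W_i$ proves the restricted consistency on a cut of each of its finite subtheories at \emph{every} standard complexity level. A $\thres{U}{j}$-proof of $\bot$ at level $j$ translates under $L_j$ into a $W_i$-proof of $\bot$ at the raised level $j+\rho^\ast(L_j)$, so the reflexive consistency of $W_i$ at that (still standard) level delivers $W_i\vdash {\sf con}_j^I(\thres{U}{j})$, which is exactly clause 2. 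I expect the main obstacle to be precisely this complexity bookkeeping: because {\sf loc}-finiteness is only a \emph{local} interpretability hypothesis, the interpretations $L_j$ may have $\rho^\ast(L_j)$ growing without bound in $j$, so no single consistency level on the left can work uniformly. The saving feature is that clause 2 quantifies the cut $I$ per $j$; one therefore treats each $j$ in isolation and lets the reflexivity of $W_i$ at the single standard level $j+\rho^\ast(L_j)$ absorb the increase. Verifying the constant $c$, that $W_i$ is genuinely sequential, and that the translation of proofs is formalizable on a cut are then routine given Section~\ref{bafa}.
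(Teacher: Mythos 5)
Your proposal is correct and follows essentially the same route as the paper: $2\Leftrightarrow 3$ by elementary cut composition, $2\To 1$ by the interpretation existence lemma together with the mutual local interpretability of ${\sf S}^1_2+{\sf con}_i(\thres{U}{i})$ and $U$, and $1\To 2$ by pushing a level-$j$ inconsistency proof of $\thres{U}{j}$ through an interpretation supplied by {\sf loc}-finiteness and then absorbing the resulting (standard but $j$-dependent) complexity increase on the left. The only divergence is in how that increase is absorbed: you invoke the local reflexivity of ${\sf S}^1_2+{\sf con}_i(\thres{U}{i})$ coming from Theorem~\ref{summumsmurf}, whereas the paper's main text gets from ${\sf con}_i(\thres{U}{i})$ to ${\sf con}_k^I(\thres{U}{i})$ via multi-exponential cut-elimination on a multi-logarithmic cut --- but this is precisely the alternative the paper itself records in Footnote~\ref{obersmurf}, so the two arguments coincide in substance.
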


\begin{proof}
$(1)\To (2)$. Suppose $U$ is {\sf loc}-finite. Then,
 $\thres{U}{i}\rhd_{\sf loc}  U$,  for some $i$. Consider any $j\geq i$. For some $M$, 
we have  $M:{\thres{U}{i} \rhd \thres{U}{j}}$.
Using $M$, we can transform a $\thres{U}{j},j$-inconsistency 
proof into an $\thres{U}{i},k$-inconsistency
proof, for a sufficiently large $k$. Thus, we have 
${\sf S}^1_2\vdash {\sf con}_k(\thres{U}{i}) \to {\sf con}_j(\thres{U}{j})$. On the other hand, for some
$I$, ${\sf S}^1_2+{\sf con}_i(\thres{U}{i})\vdash {\sf con}_k^I(\thres{U}{i})$. This last step can be seen, e.g. from the
fact that a cut-elimination that transforms a $k$-proof to an $i$-proof is multi-exponential (see \cite{buss:situ15}).\footnote{\label{obersmurf}Alternatively, 
we can prove the step by a combination of the Interpretation Existence Lemma (\cite{viss:inte17}) and the local reflexiveness
of $\thres{U}{i}$ that is a direct consequence of Theorem~\ref{summumsmurf} of the present paper.}
 It suffices to take
as $I$ an appropriate multi-logarithmic cut.
We may conclude that
${\sf S}^1_2+{\sf con}_i(\thres{U}{i})\vdash {\sf con}_j^I(\thres{U}{j})$.

\medent
$(2)\To (3)$ and $(3)\To (2)$ are trivial. 

\medent
For $(2)\To (1)$, one shows that, for $i$ as promised,
${\sf S}^1_2+{\sf con}_i(\thres{U}{i})$ is mutually  locally interpretable with $U$. 
\end{proof}

\noindent
We note that in the above proof, the precise point where where sequentiality is used, is the insight that $U$ interprets ${\sf S}^1_2+{\sf con}_i(\thres{U}{i})$.
Thus, only $(2)\To (1)$ depends on sequentiality.\footnote{This is not true anymore when we employ the argument of Footnote~\ref{obersmurf}.}

\medent
By a result of Wilkie and Paris (\cite{wilk:sche87}, see also \cite{viss:seco11}), we have, for $\Sigma_1$-sentences $P$ and $Q$, that
${\sf EA} + P \vdash Q$ iff, for some ${\sf S}^1_2$-cut $I$, we have ${\sf S}^1_2+P \vdash Q^I$. Hence, it follows that: 

\begin{corollary}
Suppose $U$ is sequential.  The following are equivalent:
\begin{enumerate}[1.]
\item
$U$ is {\sf loc}-finite.
\item
$\exists i\;\forall j{\geq}i\;\;  {\sf EA}+{\sf con}_i(U)\vdash {\sf con}_j(U)$.
\item
$\exists i\;\forall j{\geq}i\;\; {\sf EA}+{\sf con}_j(U)\vdash {\sf con}_{j+1}(U)$.
\end{enumerate}
\end{corollary}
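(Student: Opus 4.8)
The plan is to obtain the corollary from Theorem~\ref{charfindue} by passing through the Wilkie--Paris transfer between provability over {\sf EA} and provability over ${\sf S}^1_2$ on a definable cut. Condition (1), being {\sf loc}-finite, is verbatim the same in the theorem and in the corollary, and the equivalence of the corollary's (2) and (3) is elementary: (3) implies (2) by chaining the consecutive implications ${\sf EA}+{\sf con}_j(U)\vdash{\sf con}_{j+1}(U)$ in an external induction on $j$, and (2) implies (3) because ${\sf EA}\vdash{\sf con}_j(U)\to{\sf con}_i(U)$ whenever $i\leq j$. So it is enough to prove that the corollary's condition (2) is equivalent to {\sf loc}-finiteness, and I would do this by matching it with the theorem's condition (2). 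The passage between ${\sf con}_n(U)$ and ${\sf con}_n(\thres{U}{n})$ is routine bookkeeping: the two agree under the reading of $n$-provability that bounds the G\"odel numbers of the axioms, and in any case the difference washes out under the $\forall j\geq i$ quantifier since $\thres{U}{j}$ exhausts $U$.

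For the direction from {\sf loc}-finiteness to (2) I would not need Wilkie--Paris at all. The proof of $(1)\To(2)$ in Theorem~\ref{charfindue} supplies, for a fixed $i$ and each $j\geq i$, a concrete multi-logarithmic cut $I$ with ${\sf S}^1_2+{\sf con}_i(\thres{U}{i})\vdash{\sf con}^I_j(\thres{U}{j})$. For a standard iteration depth such a cut is $\Delta_0$-definable and provably total in {\sf EA} by $\Delta_0$-induction, so ${\sf EA}\vdash{\sf con}^I_j(\thres{U}{j})\to{\sf con}_j(\thres{U}{j})$; since {\sf EA} extends ${\sf S}^1_2$ this already gives ${\sf EA}+{\sf con}_i(U)\vdash{\sf con}_j(U)$, which is (2).

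For the converse, from (2) to {\sf loc}-finiteness, Wilkie--Paris carries the argument, but the consistency statements are $\Pi_1$ whereas the transfer is stated for $\Sigma_1$ sentences, so I would first contrapose: ${\sf EA}+{\sf con}_i(U)\vdash{\sf con}_j(U)$ iff ${\sf EA}+{\sf incon}_j(U)\vdash{\sf incon}_i(U)$, an implication between the $\Sigma_1$ sentences ${\sf incon}_j(U)$ and ${\sf incon}_i(U)$. The transfer then yields a cut $I$ with ${\sf S}^1_2+{\sf incon}_j(U)\vdash{\sf incon}^I_i(U)$, and contraposing back inside ${\sf S}^1_2$ gives ${\sf S}^1_2+{\sf con}^I_i(U)\vdash{\sf con}_j(U)$. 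Strengthening the hypothesis from ${\sf con}^I_i(U)$ to ${\sf con}_i(U)$ and then trivially weakening the conclusion to a cut lands me on the theorem's condition (2), whence {\sf loc}-finiteness by Theorem~\ref{charfindue}.

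The step I expect to be the real obstacle is exactly the placement of the cut. Wilkie--Paris attaches the cut to the $\Sigma_1$-conclusion, so after contraposition it sits on the consistency-hypothesis ${\sf con}^I_i(U)$ rather than on the conclusion as Theorem~\ref{charfindue} displays it in its clause (2); and removing the cut over {\sf EA} in the first direction is legitimate only because the cuts produced by the theorem's proof are $\Delta_0$ and total in {\sf EA}, not because an arbitrary ${\sf S}^1_2$-definable cut could be removed. Routing both directions through condition (1) --- using the explicit multi-logarithmic cuts to go from {\sf loc}-finiteness to the {\sf EA}-implication, and Wilkie--Paris to come back --- is what lets me avoid having to identify the cut-on-hypothesis and cut-on-conclusion forms of condition (2) directly.
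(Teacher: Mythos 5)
Your proposal is essentially the paper's intended argument: the paper's entire proof consists of quoting the Wilkie--Paris transfer and writing ``hence, it follows,'' and what you supply is a correct filling-in of that ``hence.'' Your two main observations --- that one must contrapose to bring the $\Pi_1$ consistency statements into the $\Sigma_1$ form the transfer requires, and that the transfer then places the cut on the hypothesis side whereas Theorem~\ref{charfindue}(2) displays it on the conclusion side, so that the two directions must be routed differently (explicit multi-logarithmic, {\sf EA}-removable cuts one way; Wilkie--Paris the other) --- are exactly the details the paper elides, and your resolution of the asymmetry is sound. The one place where you are too quick is the passage between ${\sf con}_n(U)$ and ${\sf con}_n(\thres{U}{n})$. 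Under the paper's official convention (the footnote in Section~\ref{rost} states explicitly that the codes of the axioms are \emph{not} bounded in $\opr_{\eta,n}$), only the implication ${\sf con}_n(U)\to{\sf con}_n(\thres{U}{n})$ is available, and each of your two directions needs the converse at exactly one point: in $(1)\To(2)$ your final step passes from ${\sf con}_j(\thres{U}{j})$ to ${\sf con}_j(U)$ in the conclusion, and in $(2)\To(1)$ you must trade the hypothesis ${\sf con}_i(U)$ for the weaker ${\sf con}_i(\thres{U}{i})$ in order to land on Theorem~\ref{charfindue}(2). Neither substitution ``washes out'' under the $\forall j\geq i$ quantifier, since an $n$-restricted inconsistency proof from $U$ may use axioms of arbitrarily large code, so that ${\sf incon}_j(U)$ only yields ${\sf incon}_m(\thres{U}{m})$ for an unbounded $m$. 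This is, however, at least as much an ambiguity in the statement of the corollary as a defect of your proof: if ${\sf con}_j(U)$ is read as ${\sf con}_j(\thres{U}{j})$ --- the convention of the theorem it elaborates and of the definition of $\mho_U$ --- your argument is complete as written.
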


\subsection{The Main Theorem}
Consider the partial preorder, say $\mathcal L$,
of sequential degrees of global interpretability contained
in a given degree of local sequential interpretability. Note that $\mathcal L$ is closed under
suprema and infima. Suppose our given local degree is
 not {\sf loc}-finite. Then, $\mathcal L$ does not have a minimum.
This insight is formulated in the following theorem. 

\begin{theorem}\label{nomin}
Suppose that  $U$ is a recursively enumerable, sequential theory that is \emph{not}
{\sf loc}-finite. Then, there is a theory $\widetilde U$, such that
$\widetilde U\equiv_{\sf loc}U$, but  $\widetilde U\nrhd_{\sf glob} U$.
\end{theorem}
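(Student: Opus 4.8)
The plan is to realise $\widetilde U$ inside the local degree of $U$ by assembling it from consistency statements of the finite pieces $\thres U n$ relativised to ${\sf S}^1_2$-cuts, and to exploit the failure of {\sf loc}-finiteness to stop any \emph{single} interpretation from reconstructing all of $U$. First I would record the combinatorial input. Since $U$ is not {\sf loc}-finite, the negation of clause~(2) of Theorem~\ref{charfindue} gives, for every $i$, some $j\geq i$ with ${\sf S}^1_2+{\sf con}_i(\thres U i)\nvdash{\sf con}^I_j(\thres U j)$ for \emph{all} ${\sf S}^1_2$-cuts $I$. Iterating from a base level that dominates the sequentiality scheme, I obtain a strictly increasing sequence $i_0<i_1<\cdots$ witnessing that the local strength of the $\thres U{i_k}$ genuinely keeps growing and is never captured on a cut by any fixed earlier piece. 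This sequence is the engine that guarantees ``there is always room'' in the construction below.

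The candidate $\widetilde U$ is an r.e., restricted, sequential theory extending a fixed finite sequential base $A_0\subseteq U$ (so $\widetilde U$ is sequential with the same scheme $\mathcal S$), built as an increasing union $\widetilde U=\bigcup_s\widetilde U_s$ with two interleaved families of requirements. The \emph{positive} requirements adjoin, along $(i_k)$, the cut-relativised consistency axioms ${\sf con}^{\Im}(\thres U{i_k})$; by the Interpretation Existence Lemma such an axiom makes $\thres U{i_k}$ interpretable, so these commitments force $\widetilde U\rhd_{\sf loc}U$, while every finite approximation stays a consequence of the local reflexivity of $U$ provided by Theorem~\ref{summumsmurf}. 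The \emph{negative} requirements diagonalise against an enumeration $(K_e)_e$ of all interpretations of the signature of $U$ into that of $\widetilde U$: at stage $e$ I consistently adjoin an axiom asserting, on a cut $\Im_{n_e}$, that $K_e$ fails to verify an axiom of a sufficiently high piece $\thres U{c_e}$. The consistency of each such adjunction is exactly what the Small-Is-Very-Small principle supplies: by Theorem~\ref{ijdelesmurf}, if the current theory together with all standard instances of the refuting formula $C_e$ is consistent, then so is the current theory together with $\forall x\in\Im_{n_e}\,\neg\,C_e(x)$.

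Granting the construction, the two verifications split cleanly. That $\widetilde U\equiv_{\sf loc}U$ follows from $\widetilde U\rhd_{\sf loc}U$ (the positive requirements) together with $\widetilde U\lhd_{\sf loc}U$; for the latter one checks that every finite fragment of $\widetilde U$ is interpretable in $U$, the consistency axioms being outright theorems of $U$ on cuts and the negative axioms being phrased as cut-bounded assertions that some $K_e$ drives $\thres U{c_e}$ to an inconsistency, a $\Sigma_1$-flavoured statement $U$ can realise through its control over cuts. That $\widetilde U\nrhd_{\sf glob}U$ is then immediate: a global interpretation of $U$ in $\widetilde U$ would be one of the $K_e$, and stage $e$ has arranged that $\widetilde U$ refutes an axiom of $U$ under $K_e$ on a cut, so $\widetilde U\nvdash A^{K_e}$ for some axiom $A$ of $U$.

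The main obstacle is the negative step, and this is precisely where non-{\sf loc}-finiteness must be spent. One has to show that at stage $e$ there really is a level $c_e$ and a refuting formula $C_e$ for which the current finite approximation is consistent with $\forall x\in\Im_{n_e}\,\neg\,C_e(x)$, \emph{without} damaging the pledged positive requirement that $\thres U{c_e}$ be locally interpreted. The point is that $K_e$ has a fixed complexity $\rho^\ast(K_e)$, so if no refutation could ever be installed, then this single fixed-complexity interpretation would verify all of $U$ on a cut over a fixed finite fragment of $\widetilde U$; composing with $\widetilde U\lhd_{\sf loc}U$ and Theorem~\ref{summumsmurf} would then collapse the sequence $(i_k)$, i.e.\ reinstate clause~(2) of Theorem~\ref{charfindue} and make $U$ {\sf loc}-finite, against hypothesis. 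Turning this ``room always exists'' heuristic into honest stagewise bookkeeping that simultaneously respects restrictedness, sequentiality, and the fact that $\rho_0(\Im_n)$ grows only linearly in $n$ is the real labour. An attractive alternative, in keeping with the spirit of Theorem~\ref{grotesmurf}, would be to compress the whole diagonalisation into a single Friedman--Goldfarb--Harrington fixed point, producing one self-defeating $\widetilde U$ directly rather than in stages.
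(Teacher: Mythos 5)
Your proposal has a genuine gap at exactly the point you flag as ``the real labour'': the consistency of each negative requirement with the positive ones is the entire content of the theorem, and it is not established. You assert that if no refutation of $K_e$ could be installed then clause~(2) of Theorem~\ref{charfindue} would be reinstated, but deriving that implication is not a routine composition: in the paper's argument it requires (i) Pudl\'ak's cut-isomorphism theorem to transport the cuts $\Im_i^{K}$ back to cuts $J_i$ of the ambient number system, with $\rho_0(J_i)$ bounded by $\constantref{czero}i+\constantref{cfourteen}$ for a constant depending on $K$; (ii) a compactness step isolating which finitely many axioms of $\widetilde U$ prove ${\sf con}_{s+1}^{J_{s+1}}(\thres{U}{(s+1)})$; (iii) the reflection principle of Theorem~\ref{sterkesmurf} applied to the consistent restricted theory $A_s={\sf S}^1_2+{\sf con}_s(\thres{U}{s})+{\sf incon}_{s+1}^{J_{s+1}}(\thres{U}{(s+1)})$; and (iv) --- crucially --- a quantitative choice of the growth rate $F$ of the cut indices in the axioms of $\widetilde U$ so that $F(s+1)$ eventually dominates the complexity bound $\constantref{czero}s+\constantref{cseventeen}$ produced by the fixed interpretation $K$. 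Your sketch contains none of this bookkeeping, and the appeal to Theorem~\ref{ijdelesmurf} for the negative step is misdirected: the statement ``$K_e$ fails to verify an axiom of $\thres{U}{c_e}$'' is not presented in the form $U+\verz{C(\underline q)\mid q\in\omega}$ consistent $\Rightarrow$ $U+\forall x\in\Im_n\,C(x)$ consistent, and you never verify that the adjoined negative axioms keep every finite fragment of $\widetilde U$ interpretable in $U$ (your remark that $U$ ``can realise a $\Sigma_1$-flavoured statement through its control over cuts'' is not an argument).

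The stagewise architecture also creates problems the paper's proof is designed to avoid. A construction in which each stage ``consistently adjoins'' an axiom chosen by a consistency test is not in general effective, so you owe an argument that $\widetilde U$ is recursively enumerable (which it must be for the intended application to the degree structures ${\sf Glob}_{\sf seq}$ and ${\sf Loc}_{\sf seq}$); and a later negative requirement can in principle conflict with an earlier one, so some injury analysis would be needed. The paper dispenses with all of this by defining $\widetilde U:={\sf S}^1_2+\verz{{\sf con}_i^{\mathcal I_{F(i)}}(\thres{U}{i})\mid i\in\omega}$ uniformly, with $F(i)=(\constantref{czero}+1)i+1$, so that $\widetilde U\equiv_{\sf loc}U$ is immediate and the non-existence of \emph{any} global interpretation $K:\widetilde U\rhd U$ follows in one stroke from the fact that $K$ has a fixed complexity while the cut indices $F(i)$ outgrow it. Your closing suggestion to compress everything into a single fixed point is closer in spirit to what the paper actually does, but as written the proposal does not constitute a proof.
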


\begin{proof} 
Suppose that  $U$ is recursively enumerable, sequential and not {\sf loc}-finite. 
Let the signature of $U$ be
$\Theta$ and let a sequence scheme
for $U$ be $\mathcal S$. As usual $N$ is the standard interpretation of ${\sf S}^1_2$ in $U$ given by $\mathcal S$.
We write $\Im_i$ for $\Im^{\mathcal S}_i(\Theta)$.
The complexities of the $\Im_i$ are estimated by $\constantref{czero} i +\constantref{cten}+\rho_0(\mathcal S)$. 
Taking $\constant{ctwelve} :=  \constantref{cten}+\rho_0(\mathcal S)$, our estimate becomes:
$\constantref{czero} i +\constantref{ctwelve}$. 
%$\mathfrak a i + k_0$.
By the choice of the $\Im_i$ we have: $U \vdash {\sf con}^{\Im_i}_i(\thres{U}{i})$.
 
\medent
Consider ${\sf S}^1_2$. Say the signature of ${\sf S}^1_2$ is $\Xi$ and a sequence scheme
that interprets the numbers identically is $\mathcal T$. We write $\mathcal I_i$ for $\Im^{\mathcal T}_i(\Xi)$.
The complexities of the $\mathcal I_i$ are estimated by 
$\constantref{czero} i +\constantref{cten}+\rho_0(\mathcal T)$.
Taking $\constant{cthirteen} :=  \constantref{cten}+\rho_0(\mathcal T)$, our estimate becomes:
$\constantref{czero} i +\constantref{cthirteen}$. 
%$\mathfrak a i + k_1$.

\medent
We define  the theory $\widetilde U$ as follows:
Let  Let $F(i):=  (\constantref{czero} +1) i +1$.\footnote{It is a sport to take the choice of $F$ as sharp
as possible. The argument below becomes a bit more relax if we take
$F(i) := i^2 +1$ and just keep track of linear dependencies.}
 \[ \widetilde U := {\sf S}^1_2+\verz{{\sf con}_i^{\mathcal I_{F(i)}}(\thres{U}{i})\mid i\in\omega}.\]
Clearly, $\widetilde U \equiv_{\sf loc} U$. 

\bident 
Suppose, to obtain a contradiction, that, for some $K$, we have
$K:\widetilde U \rhd U$. By Pudl\'ak's theorem (\cite{pudl:cuts85}), 
there is a $\widetilde U$-cut
$J$ of $N$ and a $\widetilde U$-cut $J'$ of $K\circ N$ and a $\widetilde U$-definable isomorphism $G$ between $J$
and $J'$. We define $J_i := G^{-1}[\Im^K_i \cap J']$. We clearly have: $J_i$ is a $\widetilde U$-cut and 
$\widetilde U \vdash {\sf con}^{J_i}_i(U)$. We note that:
\[ x\in J_i :\iff \exists y\, (Gxy \wedge (y \in \Im_i)^K \wedge y \in J'),\]
 so
 that $\rho_0(J_i)$ is estimated by a linear term of the form $\constantref{czero} i + \constant{cfourteen}$.
 (Here $\constantref{cfourteen}$ is dependent on $\rho_0(K)$.)
% We have: $\widetilde U \vdash {\sf con}_i^{J_i}(U)$.

\bident
Consider any $s$. We will make $s$ more specific in the run of the argument.
We have: $\widetilde U\vdash {\sf con}_{s+1}^{J_{s+1}}(\thres{U}{(s+1)})$. Hence, by compactness, for some $p\geq s$:
\begin{equation}
{\sf S}^1_2+\verz{{\sf con}_i^{\mathcal I_{F(i)}}(\thres{U}{i})\mid i\leq s}+
\verz{{\sf con}_j^{\mathcal I_{F(j)}}(\thres{U}{j})\mid s<j\leq p}
\vdash   {\sf con}_{s+1}^{J_{s+1}}(\thres{U}{(s+1)})
\end{equation} 
Thus, it follows that:
\begin{equation}
{\sf S}^1_2+{\sf con}_s(\thres{U}{s})+{\sf incon}_{s+1}^{J_{s+1}}(\thres{U}{(s+1)})\vdash {\sf incon}_p^{\mathcal I_{F(s+1)}}(\thres{U}{p})
\label{knockdown}
\end{equation}
Since, $U$ is not {\sf loc}-finite, we have, by Theorem~\ref{charfindue},
arbitrarily large $s$'s such that $A_s:= {\sf S}^1_2+{\sf con}_s(\thres{U}{s})+{\sf incon}_{s+1}^{J_{s+1}}(\thres{U}{(s+1)})$
is consistent. We note that $\rho_0(A_s)$ is estimated by $ \constantref{czero} s + \constant{cfifteen}$ for a suitable $\constantref{cfifteen}$.   

Consider any $s$ for which $A_s$ is consistent. 
We remind the reader of Theorem~\ref{sterkesmurf}.
Applied to the case at hand this tells us the following.
Let $n:= {\sf max}(\rho_0(A_s),\constant{csixteen})$ (where $\constantref{csixteen}$ is a fixed constant). Then,
 if
$A_s \vdash {\sf incon}^{{\mathcal I}_n}_p(\thres{U}{p})$, then  ${\sf incon}_p(\thres{U}{p})$ is true. Since we assumed
that $U$ is consistent, it follows that $A_s \nvdash {\sf incon}^{{\mathcal I}_n}_p(\thres{U}{p})$.

We note that $n$ is estimated by $\constantref{czero} s + \constant{cseventeen}$ for a suitable $\constantref{cseventeen}$.
We may choose $s$ large enough so that $F(s+1) > \constantref{czero} s + \constantref{cseventeen}$ and such that $A_s$ is consistent.
It follows that $\mathcal I_{F(s+1)}$ is a subcut of $\mathcal I_n$.
Since, by  Equation~(\ref{knockdown}), $A_s \vdash {\sf incon}_p^{\mathcal I_{F(s+1)}}(\thres{U}{p})$, it follows that  
$A_s \vdash {\sf incon}_p^{\mathcal I_{n}}(\thres{U}{p})$. A contradiction.
\end{proof}

\noindent Note that Theorem~\ref{nomin} implies  that any local sequential degree that is not
{\sf loc}-finite contains an infinity of global degrees. 

\begin{question}
Every element of ${\sf Loc}_{\sf seq}$ contains an extension of ${\sf S}^1_2$ (to wit an element of the form $\mho_U$).
Does every element of ${\sf Glob}_{\sf seq}$ contain an extension of ${\sf S}^1_2$?
\end{question}

\appendix

\section{Parameters}\label{machosmurf}
  In general, interpretations are allowed to have parameters. We will briefly sketch how to add
  parameters to our framework. We first define a translation with parameters.
  The parameters of the translation are given by a fixed sequence of variables $\vec w$ that we
  keep apart from all other variables. A translation is defined as before, but for the fact that now
  the variables $\vec w$ are allowed to occur in the domain-formula and in the translations of the predicate symbols
  in addition to the variables that correspond to the argument places. Officially, we represent a
  translation $\tau_{\vec w}$ with parameters $\vec w$  as a quintuple $\tupel{\Sigma,\delta,\vec w, F,\Theta}$. 
  The parameter sequence may be empty: in this case our interpretation is parameter-free.
    
  An interpretation with parameters  $K:U\to V$ is a quadruple  $\tupel{U,\pi,E,\tau_{\vec w}, V}$, where
  $\tau_{\vec w}:\Sigma_U \to \Sigma_V$ is a translation and $\pi$ is a $V$-formula
  containing at most $\vec w$ free. The formula $\pi$ represents the parameter domain.
   For example, if we interpret the Hyperbolic Plane in the Euclidean Plane via the Poincar\'e interpretation, we
  need two distinct points to define a circular disk. These points are parameters of the construction, the
  parameter domain is $\pi(w_0,w_1) = (w_0\neq w_1)$. (For this specific example, we can also find
  a parameter-free interpretation.) The formula $E$ represents an equivalence relation on the parameter
  domain. In practice this is always pointwise identity for parameter sequences, but for reasons of theory
  one must admit other equivalence relations too.  
  We demand:
  \begin{itemize}
  \item 
  $\vdash \delta_{\tau,\vec w}(\vec v) \to \pi(\vec w)$,
  \item 
  $\vdash P_{\tau,\vec w}(\vec v_0,\ldots,\vec v_{n-1}) \to \pi(\vec w)$.
  \item
  $V\vdash \exists \vec w\, \pi(\vec w)$;
  \item
  $V\vdash E(\vec w,\vec z) \to (\pi(\vec w) \wedge \pi(\vec z))$;
  \item
  $V$ proves that $E$ represents an equivalence relation on the sequences forming the parameter domain;
  \item
  $\vdash E(\vec w,\vec z) \to \forall \vec x\, (\delta_{\tau,\vec w}(\vec x) \iff \delta_{\tau,\vec z}(\vec x))$;
  \item
  $\vdash E(\vec w,\vec z) \to \forall \vec x_0,\ldots,\vec x_{n-1}\;(P_{\tau,\vec w}(\vec x_0,\ldots,\vec x_{n-1}) \iff
  P_{\tau,\vec z}(\vec x_0,\ldots,\vec x_{n-1}) )$;
   \item
  for all $U$-axioms $A$, $V\vdash \forall \vec w\,(\pi(\vec w) \to A^{\tau,{\vec w}})$.
  \end{itemize}
  
  \noindent
  We can lift the various operations in the obvious way.   
    Note that the parameter domain  of
  $N :=M\circ K$ and the corresponding equivalence relation should be:
  \begin{itemize}
  \item
  $\pi_N(\vec w,\vec u_0,\ldots,\vec u_{k-1}) := \pi_M(\vec w) \wedge \bigwedge_{i<k}\delta_{\tau_M}(\vec w,\vec u_i)
  \wedge (\pi_K(\vec u))^{\tau_M,\vec w}$.
  \item
  $E_N(\vec w,\vec u_0,\ldots,\vec u_{k-1},\vec z,\vec v_0,\ldots,\vec v_{k-1}) := \\
  E_M(\vec w,\vec z) \wedge  \bigwedge_{i<k}\delta_{\tau_M}(\vec w,\vec u_i) \wedge
\bigwedge_{i<k}\delta_{\tau_M}(\vec w,\vec v_i) \wedge (E_K(\vec u,\vec v))^{\tau_M,\vec w}$.
  \end{itemize}
  
  \end{document}